\definecolor{applegreen}{rgb}{0.55, 0.71, 0.0}
\definecolor{cadmiumgreen}{rgb}{0.0, 0.42, 0.24}
\definecolor{burntorange}{rgb}{0.8, 0.33, 0.0}
\theoremstyle{plain}
\newtheorem{theorem}{Theorem} [section]
\newtheorem{lemma}[theorem]{Lemma}
\newtheorem{proposition}[theorem]{Proposition}
\theoremstyle{definition}
\newtheorem*{teo*}{Theorem}
\theoremstyle{definition}
\newtheorem{definition}[theorem]{Definition}
\newtheorem{remark}[theorem]{Remark}
\newtheorem{example}[theorem]{Example}
\newcommand{\esssup}{{\mathrm{ess}\sup}}
\newcommand{\cT}{\mathcal{T}}
\newcommand{\R}{\mathbb{R}}
\newcommand{\Z}{\mathbb{Z}}
\newcommand{\N}{\mathbb{N}}
\newcommand{\CC}{\mathbb{C}}
\newcommand {\w} {\omega}
\newcommand{\bigpoplus}{\overset{.}{\bigoplus}}
\newcommand{\poplus}{\overset{.}{\oplus}}
\newcommand{\supp}{{\rm supp\,}}
\newcommand{\rank}{{\rm rank\,}}
\begin{document}

\title{Diagonalization of Shift-preserving Operators}

\let\thefootnote\relax\footnote{
2010 {\it Mathematics Subject Classification:} Primary 47A15, 94A20, 42C15, 47A05

{\it Keywords:} Shift-invariant spaces, shift-preserving operators, range function, range operator, diagonalization.

The research of the authors is partially supported by grants: UBACyT 20020170100430BA, PICT 2014-1480 (ANPCyT) and CONICET PIP 11220150100355. In particular VP is also supported  by UBACyT 20020170200057BA and PICT-2016- 2616.

}

\author{A. Aguilera}
\address{ Departamento de Matem\'atica, Universidad de Buenos Aires,
	Instituto de Matem\'atica "Luis Santal\'o" (IMAS-CONICET-UBA), Buenos Aires, Argentina}
\email{aaguilera@dm.uba.ar}

\author{C. Cabrelli}
\address{ Departamento de Matem\'atica, Universidad de Buenos Aires,
	Instituto de Matem\'atica "Luis Santal\'o" (IMAS-CONICET-UBA), Buenos Aires, Argentina}
\email{cabrelli@dm.uba.ar}

\author{D. Carbajal}
\address{ Departamento de Matem\'atica, Universidad de Buenos Aires,
	Instituto de Matem\'atica "Luis Santal\'o" (IMAS-CONICET-UBA), Buenos Aires, Argentina}
\email{dcarbajal@dm.uba.ar}

\author{V. Paternostro}
\address{ Departamento de Matem\'atica, Universidad de Buenos Aires,
	Instituto de Matem\'atica "Luis Santal\'o" (IMAS-CONICET-UBA), Buenos Aires, Argentina}
\email{vpater@dm.uba.ar}

\begin{abstract}
In this note we study the structure of shift-preserving operators 
ac\-ting on a finitely generated shift-invariant space. 
We define a new notion of diagonalization for these operators, 
which we call $s$-diagonalization. We give necessary and sufficient  conditions on a bounded shift-preserving operator 
in order to be $s$-diagonalizable. These conditions are  in terms of its range operator. We also
obtain a generalized Spectral Theorem for normal bounded shift-preserving operators.

\end{abstract}

\maketitle
\section{Introduction}
Shift-invariant spaces are subspaces of $L^2(\R^d)$ that are invariant under the action of translations by integer vectors.
These spaces have been used in approximation theory, sampling theory, and wavelets, and their structure is very well known.
See \cite{B,BDR1,BDR2,H, RS} in the euclidean case, and \cite{BHP15, BR14,CP10} in the context of topological groups.

Given an (at most countable) set of functions $\Phi \subset L^2(\R^d)$ the subspace
$$
S(\Phi) :=  \overline{\text{span}} \left\{ T_k\varphi\,:\,\varphi \in \Phi, \,k\in\mathbb Z^d \right\}
 $$
 is  shift invariant  and $\Phi$ is called a {\it set of generators}. Moreover, every shift-invariant space  is of this form. A {\it finitely generated} shift-invariant space is one that has a finite set of generators and the {\it length} is the minimum cardinal between all sets of generators.
 
Each shift-invariant space $V$ has an associated {\it range function} (see Def.~\ref{range-function}) which  represents $V$  as a measurable field of closed subspaces (the fiber spaces) of $\ell^2(\Z^d).$ The  functions of $V$ have its fibers in the fiber space (see Section \ref{section-SIS}) and the connection between $V$ and its associated range function is through an isometric isomorphism that we denote by $\cT.$

The representation of a shift-invariant space through its fiber spaces is a key to studying its structure.
In fact, the   integer translates of a set of  functions in the space form  a basis or a frame if and only if  the fibers of such functions form a basis or a frame of the fiber spaces with uniform bounds (see \cite{B}). On the other hand, when the shift-invariant space is finitely generated, all these fiber spaces are finite-dimensional. As a consequence, the representation by fiber spaces allows us to translate problems that involve  shift-invariant spaces into linear algebra. 
 
 The natural operators acting on these spaces are those that commute with integer translates, i.e {\it shift-preserving} operators.
In \cite{B}, Bownik considers shift-preserving operators acting on  shift-invariant spaces and studies its properties through the concept of {\it range operator} (see Def. \ref{range operator}).
A  range operator is a representation of a shift-preserving operator as a measurable field of linear operators, obtained through the intertwining map $\cT$ mentioned before.
Each fiber of a range operator acts on the respective fiber space of the range function.
In this way, the action of the range operator through the fiber spaces permits us to decode the behavior of the associated shift-preserving operator (see Section \ref{section-SP}).
For instance, Bownik proved in \cite{B} that certain properties such as normality and unitaryness of a shift-preserving operator are inherited by its fibers.

When a shift-preserving operator acts on a  finitely generated shift-invariant space, each fiber of the  range operator is a  linear transformation acting on a finite dimensional subspace of $\ell^2(\Z^d).$ So, considering appropriate bases of the fiber spaces, one can represent the range operator
as a field of matrices. Given that, we want to translate the structure of these matrices 
(or the linear transformations that they induce) back to the shift-preserving operator. 

There are two delicate issues  when one tries to pursue this program.
 First, we  need some consistency in the structure of the field of  linear transformations. Here, the difficulty is to understand which is the  precise uniformity 
 condition to ask, to be able to obtain a similar property for the shift-preserving operator.
 The second issue is measurability. Since the field is measurable,  the objects associated with our linear transformations, such as eigenvalues, eigenvectors and  kernels, have to be measurable respect to  its behavior along the field. This requires a significative effort.

In this paper, we focus on a special notion of diagonalization for a shift-preserving operator acting on a finitely generated shift-invariant space, which we call {\it $s$-diagona\-lization} (see Def.~\ref{s-diag}). This notion generalizes the usual concept of diagonalization.
An $s$-diagonalizable shift-preserving operator is associated with a decomposition of the underlying shift-invariant space into a finite direct sum of closed subspaces, each of them being  invariant under the shift-preserving operator and under  integer translates.  The shift-preserving operator has a simple form when it acts in each of the invariant subspaces and when it is $s$-diagonalizable it can be written as a sum of simpler operators.

Towards the concept of  $s$-diagonalization, we define the notion of {\it $s$-eigenvalue}, a concept that generalizes the classical definition.
Let $L :V \rightarrow V$ be a shift-preserving operator acting on a shift-invariant space $V$ and $a=\{a_k\}_{k\in\Z^d} \in \ell^2(\Z^d)$  a 
 sequence of {\it bounded spectrum} (i.e. $\hat{a} \in L^{\infty}([0,1)^d)).$ Define $\Lambda_a := \sum_{k\in\Z^d} a_k T_k$ where
  $T_k$ is the translation operator by $k$.
  We say that $\Lambda_a$ is an $s$-eigenvalue of $L$ if  $\ker(L-\Lambda_a) \neq \{0\}.$
An $s$-eigenvalue is a very simple shift-preserving operator that acts as a convolution in the following sense:
Assume that $\{T_k \varphi_i : i=1,\dots,n, k\in \Z^d \}$ is a frame of $V$. 
If  $f =\sum_{i=1}^n \sum_{k\in \Z^d} b_i(k) T_k\varphi_i$ with $ b_1,\dots,b_n \in \ell^2(\Z^d),$ then
$\Lambda_a f =\sum_{i=1}^n \sum_{k\in \Z^d} (a*b_i)(k) T_k\varphi_i.$

One of our main results, states that a shift-preserving operator acting on a finitely generated shift-invariant space is $s$-diagonalizable if and only if its associated range operator is a field of diagonalizable matrices satisfying a uniformity angle condition.  

 As a consequence we obtain a type of spectral theorem for the case when the shift-preserving operator is  bounded and normal:

\begin{theorem}\label{spectral-intro}
Let $L :V\rightarrow V$ be a normal bounded shift-preserving operator on a finitely generated shift-invariant space $V.$
Then there exist $m\in \N$,  $s$-eigenvalues  $\Lambda_1,\dots \Lambda_m$ and orthogonal shift-invariant subspaces of $V$, $V_1,\dots, V_m$ 
such that 
$$ V=V_1\poplus\dots\poplus V_m\quad \text{ and }\quad L=\sum_{j=1}^m \Lambda_j P_{V_j},$$
where $P_S$ denotes the orthogonal projection onto a closed subspace $S.$
Furthermore, the shift-invariant subspaces $V_j$ are invariant under $L$ which implies that $Lf=\Lambda_j f$ for $f\in V_j.$
\end{theorem}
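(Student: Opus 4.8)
The plan is to transport the whole problem to the fiber side via the isometric isomorphism $\cT$ and to invoke the finite-dimensional spectral theorem fiberwise. Let $J$ be the range function of $V$ and $R$ the range operator of $L$, so that each fiber $R(\omega)$ is a linear map on the finite-dimensional space $J(\omega)\subset\ell^2(\Z^d)$, of dimension bounded by the length of $V$. Since $L$ is bounded and normal, Bownik's results guarantee that $R(\omega)$ is a bounded normal operator for a.e.\ $\omega\in[0,1)^d$, with $\|R(\omega)\|\le\|L\|$. By the spectral theorem in finite dimensions, each such $R(\omega)$ admits an orthonormal basis of eigenvectors; in particular its distinct eigenspaces are mutually orthogonal and $R(\omega)=\sum_i \lambda_i(\omega)\,P_i(\omega)$, where the $P_i(\omega)$ are the orthogonal eigenprojections.

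The core difficulty is measurability: the number of distinct eigenvalues and their multiplicities may vary with $\omega$, so one cannot simply name the $\lambda_i(\omega)$ globally. I would first partition a fundamental domain $[0,1)^d$ into finitely many measurable pieces on which the multiplicity pattern is constant---there are only finitely many such patterns because the dimension is bounded by the length $n$. On each piece I would select the eigenvalue functions $\lambda_j$ and the orthogonal eigenprojection fields $P_j(\omega)$ measurably, using the resolvent/Riesz-projection formulas together with the measurable-selection machinery developed earlier in the paper. Relabelling and regrouping across the pieces yields finitely many globally defined bounded measurable functions $\lambda_1,\dots,\lambda_m:[0,1)^d\to\CC$, with $|\lambda_j(\omega)|\le\|L\|$, and measurable fields of mutually orthogonal projections $P_j(\omega)$ summing to the identity on $J(\omega)$, with $R(\omega)=\sum_{j=1}^m\lambda_j(\omega)P_j(\omega)$ a.e.

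It remains to translate this fiber decomposition back through $\cT$. Each bounded measurable $\lambda_j$ is the Fourier transform $\hat a_j$ of a sequence $a_j\in\ell^2(\Z^d)$ of bounded spectrum; since $T_k$ acts on the fiber side as multiplication by the character $\omega\mapsto e^{2\pi i k\cdot\omega}$, the operator $\Lambda_j:=\Lambda_{a_j}$ corresponds fiberwise to the scalar $\lambda_j(\omega)$. Setting $V_j:=\cT^{-1}(\{P_j(\omega)J(\omega)\}_\omega)$ produces shift-invariant subspaces, and the a.e.\ orthogonality of the eigenspaces $P_j(\omega)J(\omega)$ passes through the isometry $\cT$ to give orthogonality of the $V_j$ and the orthogonal decomposition $V=V_1\poplus\cdots\poplus V_m$. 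Because $R(\omega)$ acts as the scalar $\lambda_j(\omega)$ on $P_j(\omega)J(\omega)$, each $V_j$ is invariant under $L$ and $Lf=\Lambda_j f$ for $f\in V_j$, whence $L=\sum_{j=1}^m\Lambda_j P_{V_j}$; equivalently, this exhibits $L$ as $s$-diagonalizable via the general criterion. I expect the main obstacle to be precisely the measurable selection of the eigenvalue functions and eigenprojections across the field with varying multiplicity---the uniformity angle condition required by the general $s$-diagonalization theorem is here automatic, since normality forces all the relevant eigenspaces to be orthogonal.
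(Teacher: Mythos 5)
Your proposal is correct and takes essentially the same route as the paper's proof: push normality to the fibers, apply the finite-dimensional spectral theorem fiberwise, obtain measurable eigenvalue functions by partitioning $[0,1)^d$ according to fiber dimension and multiplicity pattern and then pasting, lift these to $s$-eigenvalues via sequences of bounded spectrum, and use the orthogonality of the fiber eigenspaces to make the sum of the $V_j$ closed, so that the angle condition of the general $s$-diagonalization criterion is automatic. The only difference is a technical one: you propose Riesz/resolvent projections for the measurable selection, whereas the paper uses Azoff's measurable Jordan-form theorem (Proposition \ref{prop:eigenvalues}) together with the measurability of the kernels $\ker\left(R(\omega)-\lambda_j(\omega)\mathcal{I}_\omega\right)$ from Proposition \ref{measurable-prop-range}.
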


On the way, we prove that an invertible bounded shift-preserving  operator  is $s$-diagonalizable if and only if its inverse is $s$-diagonalizable,
and the decomposition of the inverse is obtained inverting the $s$-eigenvalues (see Proposition \ref{prop:inverso}). Using that a matrix with measurable entries has measurable eigenvalues, we explicitly construct $s$-eigenvalues for those shift-preserving operators acting on finitely generated shift-invariant spaces. Finally, we see that the minimum number of $s$-eigenvalues in a decomposition of a shift-preserving operator agrees with the essential supremum of the number of eigenvalues of the matrices of the associated field of operators.   

The paper is organized as follows. In Section \ref{section-SIS}, we give the basic known results about shift-invariant spaces and  range functions.
Section \ref{section-SP} is devoted to  shift-preserving operators. We  recall their definition, some known properties and prove (under a mild condition) that they can be represented  as matrices with measurable entries. Additionally, we  characterize the invertibility of a shift-preserving operator in terms of the invertibility  of its operator fibers. 
We introduce the concepts of $s$-eigenvalue and $s$-eigenspace for  a shift-preserving operator in Section \ref{section-s-eigenvalues} and study their connection with the eigenvalues and eigenspaces of  the associated range operator. Measurability issues related to $s$-eigenvalues and $s$-eigenspaces  are developed in Section \ref{section-measurability}.  Moreover, towards the existence  of $s$-eigenvalues of a shift-preserving operator, we construct measurable functions in Theorem \ref{thm:autovalores-full}  by pasting eigenvalues of the associated range operator.   Finally, in Section \ref{section-s-todo}, we introduce the notion of  $s$-diagonalization, show some of its properties and state and prove
our main result, that relates $s$-diagonalization with the diagonalization of the associated field of operators,  obtaining as a consequence Theorem \ref{spectral-intro}. 
 
\section{Shift-invariant Spaces}\label{section-SIS}

In this section we collect some of the properties of the theory of shift-invariant spaces that we will need later.

Here and for the remainder of this paper, the symbols $\poplus$ and $\oplus$ will denote the orthogonal sum and the direct sum of subspaces respectively.

\begin{definition}
	We say that a closed subspace $V\subset L^2(\mathbb R^d)$ is {\it shift invariant} if for each $f\in V$ we have that $T_k f\in V$, $\forall\,k \in \mathbb Z^d$. Here, $T_kf(x)= f(x-k)$.
\end{definition}

Given a countable set of functions $\Phi\subset L^2(\mathbb R^d)$, we will denote 
$$S(\Phi) = \overline{\text{span}} \left\{ T_k\varphi\,:\,\varphi \in \Phi, \,k\in\mathbb Z^d \right\}.$$
We say $\Phi$ is a set of generators of $V$ if $V = S(\Phi)$. When $\Phi$ is a finite set, we say that $V$ is a {\it finitely generated} shift-invariant space.
Moreover, we will denote
$$E(\Phi) = \left\{T_k\varphi\,:\,\varphi \in \Phi, \,k\in\mathbb Z^d \right\}.$$

An essential tool in the development of shift-invariant spaces theory is the technique known as fiberization that we  present now. We follow the notation used in \cite{B}.

\begin{proposition}\label{isometria}
	The map $\mathcal T: L^2(\mathbb R^d) \to L^2\left([0,1)^d,\ell^2(\mathbb Z^d)\right)$ defined by
	$$
	\mathcal T f(\omega) = \{\hat{f} (\omega+k) \}_{k \in \mathbb Z^d},
	$$
	is an isometric isomorphism. We call $\mathcal T f(\omega)$ the fiber of $f$ at $\omega$. Moreover, it satisfies that \begin{equation}\label{modulation}
	\mathcal{T} T_{k}f(\omega) = e_k(\omega) \,\mathcal T f(\omega),
	\end{equation}   
	where $e_k(\omega)=e^{-2\pi i\langle \omega,k\rangle}$. 
\end{proposition}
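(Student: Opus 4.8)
The plan is to factor $\mathcal T$ through the Plancherel transform and then recognize the remaining map as an ``unfolding'' of $L^2(\R^d)$ along the integer lattice. First I would write $\mathcal T = U\circ\mathcal F$, where $\mathcal F$ denotes the Fourier--Plancherel transform, already an isometric isomorphism of $L^2(\R^d)$, and where $U$ sends $g\in L^2(\R^d)$ to the function $\omega\mapsto\{g(\omega+k)\}_{k\in\Z^d}$. Since $\mathcal F$ is an isometric isomorphism, it suffices to prove that $U$ is one. The core computation is the isometry of $U$, which rests on the fact that $\R^d=\bigsqcup_{k\in\Z^d}\bigl([0,1)^d+k\bigr)$ is a partition up to measure zero. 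Decomposing the integral over $\R^d$ along this tiling and substituting $\xi=\omega+k$ in each piece gives $\int_{\R^d}|g(\xi)|^2\,d\xi=\sum_{k\in\Z^d}\int_{[0,1)^d}|g(\omega+k)|^2\,d\omega$, and interchanging the sum and the integral yields $\int_{[0,1)^d}\sum_{k\in\Z^d}|g(\omega+k)|^2\,d\omega=\|Ug\|_{L^2([0,1)^d,\,\ell^2(\Z^d))}^2$. Hence $\|\mathcal Tf\|=\|Ug\|=\|g\|=\|f\|$.

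For surjectivity I would exhibit an explicit inverse of $U$, namely the ``folding'' map that takes $G\in L^2([0,1)^d,\ell^2(\Z^d))$, written as $G(\omega)=\{g_k(\omega)\}_{k\in\Z^d}$, to the function $g$ on $\R^d$ defined by $g(\omega+k):=g_k(\omega)$ for $\omega\in[0,1)^d$ and $k\in\Z^d$. Running the tiling computation above in reverse shows that $g\in L^2(\R^d)$ with $\|g\|=\|G\|$, and that $U(g)=G$, so $U$ is onto; then $f=\mathcal F^{-1}g$ satisfies $\mathcal Tf=G$. Linearity of $\mathcal T$ is immediate from the coordinatewise definition together with the linearity of $\mathcal F$.

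Finally, for the intertwining relation \eqref{modulation} I would use that translation becomes modulation under the Fourier transform, $\widehat{T_kf}(\xi)=e^{-2\pi i\la\xi,k\ra}\hat f(\xi)$, and evaluate the fibers at $\omega+j$: one gets $\mathcal T T_kf(\omega)=\{e^{-2\pi i\la\omega+j,\,k\ra}\hat f(\omega+j)\}_{j\in\Z^d}$. Since $\la j,k\ra\in\Z$ for $j,k\in\Z^d$, the factor $e^{-2\pi i\la j,k\ra}$ equals $1$, leaving the common scalar $e^{-2\pi i\la\omega,k\ra}=e_k(\omega)$ in every coordinate, which is exactly $e_k(\omega)\mathcal Tf(\omega)$. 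The main technical obstacle is not any single estimate but the measurability bookkeeping underpinning these steps: the interchange of sum and integral must be justified (by Tonelli, since the integrand is nonnegative), and one must verify that $\omega\mapsto\mathcal Tf(\omega)$ is genuinely a strongly measurable $\ell^2(\Z^d)$-valued function. The latter follows because each coordinate $\omega\mapsto\hat f(\omega+k)$ is measurable and $\ell^2(\Z^d)$ is separable, so weak measurability upgrades to strong measurability by Pettis's theorem; the same separability ensures the folding map lands in the Bochner space $L^2([0,1)^d,\ell^2(\Z^d))$.
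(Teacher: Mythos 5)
Your proof is correct and is the standard argument: the paper states this proposition without proof (it is the classical fiberization result quoted from Bownik \cite{B} and Helson \cite{H}), so there is no proof in the paper to compare against. Your factorization $\mathcal T = U\circ\mathcal F$, the tiling computation $\R^d=\bigcup_{k\in\Z^d}\bigl([0,1)^d+k\bigr)$ justified by Tonelli, the explicit folding inverse, the $\langle j,k\rangle\in\Z$ cancellation for \eqref{modulation}, and the Pettis-theorem upgrade from coordinatewise to strong measurability are exactly the expected details.
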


Here, the Hilbert space $L^2\left([0,1)^d,\ell^2(\mathbb Z^d)\right)$ consists of all vector-valued measurable functions $\psi: [0,1)^d\rightarrow \ell^2(\mathbb Z^d)$ with finite norm, where the norm is given by
 $$\|\psi\|=\left( \int_{[0,1)^d} \|\psi(\omega)\|^2_{\ell^2} \,d\omega\right)^{1/2}.$$

\begin{definition}\label{range-function}
A {\it range function} is a mapping
\begin{align*}
J:[0,1)^d&\rightarrow\{\text{\,closed subspaces of }\ell^2(\mathbb Z^d)\,\}\\
\omega&\mapsto J(\omega).
\end{align*}
\end{definition}

We say $J$ is measurable if the scalar function $\omega\mapsto\langle P_{J(\omega)}u,v \rangle$ is measurable for every $u,v\in\ell^2(\mathbb Z^d)$, where $P_{J(\omega)}$ is the orthogonal projection of $\ell^2(\mathbb Z^d)$ onto $J(\omega)$. Given a range function $J$, the following space can be defined
$$M_{J} = \{\psi\in L^{2}\left([0,1)^{d},\ell^{2}(\mathbb{Z}^{d})\right) : \psi(\omega)\in J(\omega),\text{ for a.e. }\w\in [0,1)^d\},$$
which is a closed {\it modulation-invariant} subspace of $L^2\left([0,1)^d,\ell^2(\Z^d)\right)$, i.e. for every $\psi \in M_J$ we have that $e_{k}\psi \in M_J$ for all $k\in \mathbb{Z}^{d}$. 

In \cite{B} and \cite{H}, it was proved that for every shift-invariant space $V\subset L^2(\mathbb R^d)$ there exists 
a measurable range function $J_V$ which sa\-tis\-fies that 
\begin{equation}\label{range function}
f\in V \;\text{if and only if }\mathcal T f(\omega)\in J_V(\omega), \: \text{for a.e. }\omega\in [0,1)^d,
\end{equation}
that is $\mathcal T (V) = M_{J_V}$.
On the other hand, every measurable range function $J$ defines a shift-invariant space, namely $V:= \mathcal T^{-1}(M_{J})$. When identifying range functions a.e. $\w\in [0,1)^d$, the correspondence between measurable range functions and shift-invariant spaces is one-to-one (see \cite[Proposition 1.5]{B}). 

Furthermore, if $V=S(\Phi)$ for some coun\-table set $\Phi\subset L^2(\mathbb R^d)$, then for a.e. $\omega\in[0,1)^d$,
\begin{equation*}\label{range-function-generated}
J_V(\omega) = \overline{\text{span}}\{\mathcal T f(\omega)\,:\,f\in\Phi\,\}.
\end{equation*} 
In particular, when $\Phi$ is a finite set, this allows us to translate problems in infinite dimensional shift-invariant spaces, into problems of finite dimension that can be treated with linear algebra. 

We call the subspace $J_V(\omega)$ the fiber space of $V$ at $\omega$ and we will simply denote $J$ when it is evident that we are referring to the corresponding range function of $V$. 

We have the following useful property.

\begin{lemma}\label{Helson projections}(Helson, 
\cite{H})
Let $V\subset L^{2}(\mathbb{R}^d)$ be a shift-invariant space with associated range function $J$. Then, for each $f\in L^2(\mathbb R^d)$ we have that
$$\mathcal T (P_V f)(\omega) = P_{J (\omega)}(\mathcal T f (\omega)),\quad\text{for a.e. }\w\in[0,1)^d.$$
\end{lemma}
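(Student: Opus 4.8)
The plan is to reduce the statement to the assertion that the fiberwise projection is precisely the orthogonal projection onto $M_J$, and then to transport this identity back through the isometry $\mathcal{T}$. Concretely, I would define an operator $Q$ on $L^2([0,1)^d,\ell^2(\Z^d))$ by $(Q\psi)(\omega):=P_{J(\omega)}(\psi(\omega))$ and argue that $Q=P_{M_J}$, the orthogonal projection onto $M_J=\mathcal{T}(V)$. Granting this, since $\mathcal{T}$ is an isometric isomorphism carrying $V$ onto $M_J$ (Proposition~\ref{isometria} together with the fact that $\mathcal{T}(V)=M_{J}$), it intertwines the orthogonal projections, namely $\mathcal{T}\circ P_V=P_{M_J}\circ\mathcal{T}=Q\circ\mathcal{T}$. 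Evaluating at $f$ and at $\omega$ then yields $\mathcal{T}(P_V f)(\omega)=(Q\,\mathcal{T} f)(\omega)=P_{J(\omega)}(\mathcal{T} f(\omega))$, which is exactly the claim.

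To prove $Q=P_{M_J}$ I would verify the defining properties of the orthogonal projection onto $M_J$. First, $Q$ is bounded: the pointwise contraction $\|P_{J(\omega)}\psi(\omega)\|_{\ell^2}\le\|\psi(\omega)\|_{\ell^2}$ gives $\|Q\psi\|\le\|\psi\|$, once the measurability of $\omega\mapsto P_{J(\omega)}\psi(\omega)$ has been secured (see below). Second, $Q$ maps into $M_J$ because $P_{J(\omega)}\psi(\omega)\in J(\omega)$ for a.e. $\omega$, and $Q$ fixes $M_J$ pointwise because $\psi(\omega)\in J(\omega)$ forces $P_{J(\omega)}\psi(\omega)=\psi(\omega)$; together these give $Q^2=Q$ and that the range of $Q$ is exactly $M_J$. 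Finally, self-adjointness follows fiberwise from $\langle P_{J(\omega)}\psi(\omega),\phi(\omega)\rangle=\langle\psi(\omega),P_{J(\omega)}\phi(\omega)\rangle$ and integration in $\omega$. A bounded idempotent self-adjoint operator whose range is $M_J$ is the orthogonal projection $P_{M_J}$.

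The main obstacle is the measurability step needed to make $Q$ well-defined, that is, showing that $\omega\mapsto P_{J(\omega)}\psi(\omega)$ is a strongly measurable $\ell^2(\Z^d)$-valued function for each $\psi$. This is precisely where the measurability of the range function $J$ enters. I would first treat a constant vector: for fixed $u\in\ell^2(\Z^d)$, the measurability of $J$ says that $\omega\mapsto\langle P_{J(\omega)}u,v\rangle$ is measurable for every $v$, so $\omega\mapsto P_{J(\omega)}u$ is weakly measurable, hence strongly measurable by the Pettis measurability theorem, since $\ell^2(\Z^d)$ is separable. This settles the case of simple functions $\psi=\sum_i\chi_{A_i}u_i$ via $P_{J(\omega)}\psi(\omega)=\sum_i\chi_{A_i}(\omega)P_{J(\omega)}u_i$, and the general case then follows by approximating $\psi$ in $L^2$ by simple functions and using the contraction bound to pass to the limit. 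Once measurability and the $L^2$-bound are in hand, the remaining verifications are routine fiberwise computations.
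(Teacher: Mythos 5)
Your proof is correct. Note that the paper itself offers no proof of this lemma: it is stated with an attribution to Helson and the citation \cite{H}, so there is no internal argument to compare against. What you give is essentially the classical proof (it is also how the corresponding statement is established in \cite{B}): one checks that the fiberwise operator $Q$, $(Q\psi)(\omega)=P_{J(\omega)}(\psi(\omega))$, is bounded, idempotent, and self-adjoint with range exactly $M_J$, hence $Q=P_{M_J}$, and then conjugation by the unitary $\mathcal{T}$ of Proposition \ref{isometria}, which carries $V$ onto $M_J$, gives $\mathcal{T}\circ P_V=P_{M_J}\circ\mathcal{T}=Q\circ\mathcal{T}$, which is the claim. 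You also handle the one genuinely delicate point, the strong measurability of $\omega\mapsto P_{J(\omega)}\psi(\omega)$, correctly: weak measurability for constant vectors is exactly the definition of measurability of $J$, separability of $\ell^2(\Z^d)$ plus Pettis' theorem upgrades it to strong measurability, and the extension to simple functions and then to general $\psi$ via the pointwise contraction bound is sound (a.e.\ convergence along an approximating sequence of simple functions suffices). The only structural difference from Helson's original treatment is the direction of the argument: Helson starts from the closed, doubly invariant subspace $\mathcal{T}(V)$ and derives the range function from the projection $P_{\mathcal{T}(V)}$, using that it commutes with the multiplication operators $e_k$ and is therefore decomposable, whereas you start from the measurable range function $J$, whose existence the paper's setup already provides, and verify directly that the fiberwise projections assemble into $P_{M_J}$. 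Given what the paper takes as established, your route is the more elementary and self-contained one.
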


\begin{definition}
The {\it length} of a finitely generated shift-invariant space $V\subset L^{2}(\mathbb{R}^{d})$ is denoted by $\mathcal{L}(V)$ and defined as the smallest natural number $\ell$ such that there exist $\varphi_{1},...,\varphi_{\ell} \in V$ with $V=S(\varphi_{1},...,\varphi_{\ell})$.
It is possible to give an equivalent definition of $\mathcal{L}(V)$ in terms of the range function $J$ associated to $V$, which is $$\mathcal{L}(V) = \underset{\omega\in [0,1)^d}{\text{\rm ess sup }}  \dim J(\omega).$$
 Since the range function of $V$ can take  the subspace $\{0\}$ as value, the {\it spectrum} of $V$ is defined by $$ \sigma(V) = \left\{\omega \in [0,1)^d\,:\,  J(\omega) \neq\{0\} \right\}.$$
\end{definition}

We now  summarize some properties of shift-invariant spaces that we will need along the paper. 
We refer for details and proofs to \cite{BDR2}.

\begin{lemma}\label{spec principal}
Let $V$ be a shift-invariant space of $L^2(\R^d)$. Then, there exists $\varphi\in V$ such that $\supp \|\mathcal T \varphi(\cdot)\|_{\ell^2}=\sigma(V).$
\end{lemma}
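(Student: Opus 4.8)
The plan is to build $\varphi$ fiber by fiber, patching together the fibers of a countable generating set in such a way that no cancellation can occur. First I would fix a countable set $\Phi=\{\varphi_i\}_{i\in\N}$ with $V=S(\Phi)$ and recall that $J(\omega)=\overline{\text{span}}\{\mathcal T\varphi_i(\omega):i\in\N\}$ for a.e. $\omega$. Since each $\mathcal T\varphi_i$ is an element of $L^2([0,1)^d,\ell^2(\Z^d))$, the scalar functions $\omega\mapsto\|\mathcal T\varphi_i(\omega)\|_{\ell^2}$ are measurable, so the sets $A_i:=\{\omega:\mathcal T\varphi_i(\omega)\neq 0\}$ are measurable. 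From the formula for $J$ it follows that $J(\omega)=\{0\}$ precisely when every fiber $\mathcal T\varphi_i(\omega)$ vanishes, whence $\sigma(V)=\bigcup_{i}A_i$ up to a null set.

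Next I would disjointify: set $\Omega_1=A_1$ and $\Omega_i=A_i\setminus\bigcup_{j<i}A_j$ for $i\geq 2$, obtaining pairwise disjoint measurable sets with $\bigcup_i\Omega_i=\sigma(V)$. The point of this partition is that on each $\Omega_i$ exactly one distinguished generator fiber, namely $\mathcal T\varphi_i(\omega)$, is used, and it is nonzero there. I then choose positive scalars $c_i$ small enough that $\sum_i c_i^2\|\varphi_i\|^2<\infty$ (for instance $c_i=2^{-i}(1+\|\varphi_i\|)^{-1}$) and define $\psi\in L^2([0,1)^d,\ell^2(\Z^d))$ by
\begin{equation*}
\psi(\omega)=\sum_{i\in\N} c_i\,\chi_{\Omega_i}(\omega)\,\mathcal T\varphi_i(\omega).
\end{equation*}
Because the $\Omega_i$ are disjoint the sum reduces to a single term at each $\omega$, so $\psi$ is measurable, and the coefficient choice gives $\|\psi\|^2\leq\sum_i c_i^2\|\varphi_i\|^2<\infty$. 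Setting $\varphi:=\mathcal T^{-1}\psi$ then produces a genuine element of $L^2(\R^d)$.

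Finally I would verify the two required properties. For membership, on $\Omega_i\subset\sigma(V)$ one has $\psi(\omega)=c_i\mathcal T\varphi_i(\omega)\in J(\omega)$, while off $\sigma(V)$ one has $\psi(\omega)=0\in J(\omega)=\{0\}$; hence $\mathcal T\varphi(\omega)\in J(\omega)$ a.e. and $\varphi\in V$ by \eqref{range function}. For the support, on each $\Omega_i$ the vector $\psi(\omega)=c_i\mathcal T\varphi_i(\omega)$ is nonzero (since $c_i>0$ and $\omega\in A_i$), so $\|\mathcal T\varphi(\omega)\|_{\ell^2}>0$ on $\sigma(V)=\bigcup_i\Omega_i$, whereas $\|\mathcal T\varphi(\omega)\|_{\ell^2}=0$ off $\sigma(V)$; therefore $\supp\|\mathcal T\varphi(\cdot)\|_{\ell^2}=\sigma(V)$.

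The only genuinely delicate point is avoiding cancellation: a naive linear combination $\sum_i c_i\varphi_i$ could have its fiber vanish on a positive-measure subset of $\sigma(V)$. The disjoint partition of $\sigma(V)$ according to the first nonvanishing generator fiber is exactly what rules this out, reducing each fiber to a single nonzero scalar multiple; the remaining work (measurability of the $A_i$ and $L^2$-summability) is routine given the isometry $\mathcal T$.
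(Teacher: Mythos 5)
Your proof is correct, but it follows a genuinely different route from the one the paper relies on. The paper does not prove this lemma at all; it cites the literature (\cite{BDR2}), and within the paper's own toolkit the quickest argument is via Theorem \ref{thm:principal-decomposition}: decompose $V=\bigpoplus_{i\in\N} S(\varphi_i)$ with $\varphi_i$ Parseval frame generators and nested spectra $\sigma(S(\varphi_{i+1}))\subset\sigma(S(\varphi_i))$; since $\|\cT\varphi_1(\w)\|=1_{\sigma(S(\varphi_1))}(\w)$ and $\dim J_V(\w)=\sum_i\|\cT\varphi_i(\w)\|$, one gets $\sigma(V)=\sigma(S(\varphi_1))=\supp\|\cT\varphi_1(\cdot)\|$, so $\varphi=\varphi_1$ works immediately. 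Your construction instead starts from an arbitrary countable generating set, partitions $\sigma(V)$ by the first index $i$ with $\cT\varphi_i(\omega)\neq 0$, and glues the scaled fibers $c_i\chi_{\Omega_i}(\omega)\cT\varphi_i(\omega)$ into a single $L^2$ vector field lying in $J(\omega)$ a.e.; pulling back through $\cT^{-1}$ gives $\varphi\in V$ with nonvanishing fiber exactly on $\sigma(V)$. All the steps check out: the sets $A_i$ are measurable, the disjointification kills the cancellation problem you correctly identify as the only danger, the coefficient choice guarantees $\psi\in L^2([0,1)^d,\ell^2(\Z^d))$, and membership in $V$ follows from the range-function characterization \eqref{range function}. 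What your approach buys is self-containedness and generality: it uses only the fiberization isometry and the a.e.\ formula for $J$ on a generating set, avoiding the full strength of the orthogonal decomposition theorem (whose proof is itself nontrivial); what the paper's route buys is brevity, plus the extra normalization $\|\cT\varphi(\w)\|=1_{\sigma(V)}(\w)$, which your $\varphi$ need not satisfy but which the lemma does not require.
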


\begin{lemma}\label{range-properties}
Let $V, U$ be shift-invariant spaces of $L^2(\R^d)$ with associated range functions $J_V, J_U$ respectively. Then we have:
\begin{enumerate}[\rm (i)]
\item\label{item 1 range-properties} The orthogonal complement of $V$, $V^{\perp}$, is shift invariant and $J_{V^{\perp}}(\w)=\left(J_V(\w)\right)^{\perp}$ for a.e. $\w\in[0,1)^d$.
\item\label{item 2 range-properties} If   $J_{V}(\w)=J_{U}(\w)$ for a.e. $\w\in [0,1)^d$, then $U = V$.
\item\label{item 3 range-properties} The space  $V\cap U$ is  shift invariant and its range function satisfies $J_{V\cap\, U}(\omega) = J_V(\omega) \cap J_U(\omega),$
 	for a.e. $\omega\in [0,1)^d$.
\end{enumerate}
\end{lemma}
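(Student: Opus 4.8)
The common strategy is to push each assertion through the isometric isomorphism $\mathcal{T}$ onto the modulation-invariant side $L^2([0,1)^d,\ell^2(\mathbb{Z}^d))$, to identify the image as $M_K$ for an explicitly described range function $K$, to verify that $K$ is measurable, and then to invoke the one-to-one correspondence between measurable range functions and shift-invariant spaces (\cite[Proposition 1.5]{B}, recalled above) to deduce that the range function in question equals $K$ almost everywhere. Part~(ii) is immediate from this correspondence alone: if $J_V=J_U$ a.e.\ then $M_{J_V}=M_{J_U}$, hence $\mathcal{T}(V)=\mathcal{T}(U)$, and bijectivity of $\mathcal{T}$ forces $V=U$.

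For part~(i), I would first note that $V^{\perp}$ is shift invariant because each $T_k$ is unitary: if $f\perp V$ then $\langle T_kf,g\rangle=\langle f,T_{-k}g\rangle=0$ for every $g\in V$, since $T_{-k}g\in V$. Thus $V^{\perp}$ has an associated measurable range function $J_{V^{\perp}}$, and Lemma~\ref{Helson projections} applies to both $V$ and $V^{\perp}$. Writing $P_{V^{\perp}}=I-P_V$ and using $\mathcal{T}(P_Vf)(\omega)=P_{J_V(\omega)}\mathcal{T}f(\omega)$, I get $\mathcal{T}(P_{V^{\perp}}f)(\omega)=(I-P_{J_V(\omega)})\mathcal{T}f(\omega)=P_{(J_V(\omega))^{\perp}}\mathcal{T}f(\omega)$ for a.e.\ $\omega$; comparing with Helson's identity for $V^{\perp}$ yields $P_{J_{V^{\perp}}(\omega)}\mathcal{T}f(\omega)=P_{(J_V(\omega))^{\perp}}\mathcal{T}f(\omega)$ a.e., for each fixed $f$. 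To turn this into an equality of projection operators I would exchange quantifiers through a countable dense set $\{v_n\}\subset\ell^2(\mathbb{Z}^d)$: realizing each $v_n$ as a constant fiber $\mathcal{T}f_n\equiv v_n$ (legitimate since $\mathcal{T}$ is onto), the identity gives $P_{J_{V^{\perp}}(\omega)}v_n=P_{(J_V(\omega))^{\perp}}v_n$ for a.e.\ $\omega$; intersecting the resulting full-measure sets over $n$ and using continuity of the projections, the two bounded operators agree on all of $\ell^2(\mathbb{Z}^d)$ for a.e.\ $\omega$, whence $J_{V^{\perp}}(\omega)=(J_V(\omega))^{\perp}$ a.e.

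For part~(iii), shift-invariance of $V\cap U$ is clear, so it has a measurable range function $J_{V\cap U}$. Setting $K(\omega)=J_V(\omega)\cap J_U(\omega)$, the identity $\mathcal{T}(V\cap U)=\mathcal{T}(V)\cap\mathcal{T}(U)=M_{J_V}\cap M_{J_U}=M_K$ follows at once from the a.e.-membership description of these spaces, both inclusions being transparent. The one remaining point, which I expect to be the main obstacle here, is the measurability of $K$; for this I would use von Neumann's alternating-projection theorem, $P_{J_V(\omega)\cap J_U(\omega)}=\lim_{n\to\infty}\big(P_{J_V(\omega)}P_{J_U(\omega)}\big)^n$ in the strong operator topology, which exhibits $P_{K(\omega)}$ as a pointwise strong limit of measurable projection-valued maps and hence as measurable. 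With $K$ measurable, $M_K=M_{J_{V\cap U}}$ and the one-to-one correspondence give $J_{V\cap U}=K$ a.e.

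The genuinely delicate ingredient common to parts~(i) and~(iii) is the interchange of \emph{for all vectors} with \emph{for a.e.\ $\omega$}: the fiberwise equalities are produced one test vector at a time, and upgrading them to statements valid for all vectors at a.e.\ single $\omega$ relies on a countable dense family together with continuity of projections, while the measurability of the derived range functions $\omega\mapsto(J_V(\omega))^{\perp}$ and $\omega\mapsto J_V(\omega)\cap J_U(\omega)$ is precisely what makes the one-to-one correspondence applicable.
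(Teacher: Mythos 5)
Your proof is correct. There is, however, nothing in the paper to compare it against: the authors do not prove Lemma~\ref{range-properties} at all, but defer to \cite{BDR2} for items (i) and (ii) and to \cite{AC} for item (iii). Your argument is a valid, self-contained replacement built almost entirely from tools already stated in the paper: bijectivity of $\mathcal T$ together with $\mathcal T(V)=M_{J_V}$ gives (ii) immediately; Lemma~\ref{Helson projections}, linearity of $\mathcal T$, and the quantifier exchange over a countable dense set realized as constant fibers gives (i); and the identification $\mathcal T(V\cap U)=M_K$ with $K(\omega)=J_V(\omega)\cap J_U(\omega)$, combined with the one-to-one correspondence of \cite[Proposition 1.5]{B}, gives (iii) once $K$ is known to be measurable --- von Neumann's alternating projection theorem being the one genuinely external ingredient you import for that purpose, and correctly so. Two cosmetic points. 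First, the maps $\omega\mapsto\left(P_{J_V(\omega)}P_{J_U(\omega)}\right)^n$ are not projection-valued (a product of projections is not a projection unless they commute); they are merely measurable operator-valued maps whose strong limit is $P_{K(\omega)}$, which is all you need. Second, the weak measurability of such products deserves one explicit line: for an orthonormal basis $\{e_j\}$ of $\ell^2(\Z^d)$ one has $\langle P_{J_V(\omega)}P_{J_U(\omega)}u,v\rangle=\sum_{j}\langle P_{J_U(\omega)}u,e_j\rangle\,\langle P_{J_V(\omega)}e_j,v\rangle$, a pointwise convergent series of measurable functions, and iterating this handles all powers; this uses separability of $\ell^2(\Z^d)$. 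Neither point affects the correctness of your argument.
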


The last item of the above lemma was proved in \cite{AC}. The following Theorem is due by Bownik in \cite{B}.  

\begin{theorem}\label{thm:principal-decomposition}
Let $V$ be a shift-invariant space of $L^2(\R^d)$. Then $V$ can be decomposed as an orthogonal sum 
\begin{equation}\label{principal-decomposition}
V=\underset{i\in\N}{\bigpoplus}\, S(\varphi_i),
\end{equation} where $\varphi_i$ is a Parseval frame generator of $S(\varphi_i)$, and $\sigma(S(\varphi_{i+1}))\subset \sigma(S(\varphi_i))$ for all $i\in\N$. Moreover, $\dim J_{S(\varphi_i)}(\w) = \|\cT \varphi_i(\w)\|$ for every $i\in\N$ and $\dim J_V(\w) = \sum_{i\in\N} \|\cT\varphi_i(\w)\|$ for a.e. $\w\in [0,1)^d.$
\end{theorem}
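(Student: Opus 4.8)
The plan is to argue entirely on the fiber side and to manufacture the generators $\varphi_i$ by producing, measurably in $\w$, an orthonormal basis of each fiber space $J(\w):=J_V(\w)$, arranged so that the supports of the successive basis vectors are nested. First I would record two measurability facts. Since $J_V$ is a measurable range function, fixing a countable dense set $\{u_n\}_{n\in\N}\subset\ell^2(\Z^d)$ gives measurable vector fields $g_n(\w):=P_{J(\w)}u_n$ whose values are dense in $J(\w)$ for a.e.\ $\w$; and since $\w\mapsto\dim J(\w)$ is measurable, the sets $\sigma_i:=\{\w:\dim J(\w)\geq i\}$ are measurable and nested, $\sigma_1\supseteq\sigma_2\supseteq\cdots$. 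These $\sigma_i$ are exactly the supports I aim to realize.

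The heart of the argument, and the step I expect to be the main obstacle, is a measurable Gram--Schmidt procedure applied to $\{g_n\}_n$ producing orthonormal vector fields $e_1,e_2,\dots$ with $\|e_i(\w)\|\in\{0,1\}$, $\supp\|e_i\|=\sigma_i$, and $\{\,e_i(\w):e_i(\w)\neq0\,\}$ an orthonormal basis of $J(\w)$ for a.e.\ $\w$. The idea is to orthonormalize the $g_n(\w)$ in the natural order, measurably discarding at each $\w$ those $g_n(\w)$ that already lie in the span of the previously chosen vectors; at each $\w$ the process creates exactly $\dim J(\w)$ nonzero vectors, so declaring the surplus vectors to be $0$ forces $e_i$ to be supported precisely on $\sigma_i$. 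The delicate point is that the rule ``take the first index not yet in the current span'' depends on $\w$, so one must check that the orthogonal residues, their norms, and the level sets on which a new basis vector appears are all measurable; this is exactly the measurability effort flagged in the Introduction.

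With the fields $e_i$ at hand I set $\varphi_i:=\cT^{-1}e_i$. Since $e_i(\w)\in J(\w)=J_V(\w)$ a.e., the defining property $f\in V\Leftrightarrow\cT f(\w)\in J_V(\w)$ gives $\varphi_i\in V$. Because $\|\cT\varphi_i(\w)\|=\|e_i(\w)\|\in\{0,1\}$, the fiberwise characterization of Parseval frames (see \cite{B}) shows that $E(\varphi_i)$ is a Parseval frame of $S(\varphi_i)$; and by the formula $J_{S(\varphi_i)}(\w)=\overline{\spn}\{\cT\varphi_i(\w)\}$ we obtain $\dim J_{S(\varphi_i)}(\w)=\dim\overline{\spn}\{e_i(\w)\}=\|\cT\varphi_i(\w)\|$. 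Moreover $\supp\|\cT\varphi_i\|=\sigma_i$, and the nesting $\sigma_{i+1}\subseteq\sigma_i$ is precisely $\sigma(S(\varphi_{i+1}))\subseteq\sigma(S(\varphi_i))$.

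It remains to transfer the fiberwise orthogonal decomposition back to $V$. For $i\neq j$ the vectors $e_i(\w),e_j(\w)$ are orthogonal, so $J_{S(\varphi_j)}(\w)\subseteq\big(J_{S(\varphi_i)}(\w)\big)^{\perp}=J_{S(\varphi_i)^{\perp}}(\w)$ a.e.; by Lemma~\ref{range-properties}(i) together with the one-to-one correspondence between measurable range functions and shift-invariant spaces (\cite[Proposition 1.5]{B}), this yields $S(\varphi_i)\perp S(\varphi_j)$. Finally, since the nonzero $e_i(\w)$ form an orthonormal basis of $J(\w)$ we have $J_V(\w)=\bigoplus_i J_{S(\varphi_i)}(\w)$ a.e., and the same correspondence gives $V=\bigpoplus_i S(\varphi_i)$. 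Summing the fiber dimensions then produces $\dim J_V(\w)=\sum_i\|\cT\varphi_i(\w)\|$ a.e., completing the proof.
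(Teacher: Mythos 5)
Your proof is correct, and it is essentially the standard argument for this result: the paper itself gives no proof of Theorem~\ref{thm:principal-decomposition}, attributing it to Bownik \cite{B}, whose proof (following Helson \cite{H}) rests on precisely the measurable selection of fiberwise orthonormal bases with nested supports $\{\w : \dim J_V(\w)\geq i\}$ that you construct, followed by pulling back through $\cT^{-1}$. The only step you leave as a sketch --- the measurable Gram--Schmidt with compression of indices --- is indeed the real content, but your plan for it is sound and can be completed by the same partition-and-paste technique the paper uses elsewhere (e.g.\ in Lemma~\ref{measurable ker}); everything downstream (the Parseval property from $\|\cT\varphi_i(\w)\|\in\{0,1\}$, orthogonality and completeness of the sum via range functions and Lemma~\ref{range-properties}) is routine and correctly executed.
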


Observe that $\varphi$ is a Parseval frame generator of $S(\varphi)$ if and only if $\|\cT \varphi(\w)\|=1_{\sigma(S(\varphi))}(\w)$ for a.e. $\w\in[0,1)^d$ (see Theorem 2.3 in \cite{B}).

The following lemma is a consequence of Theorem \ref{thm:principal-decomposition}, which will be a key to working through measurability problems in Section \ref{section-measurability}. We require here, as well as in many other places along this article, that $\dim J(\w)<\infty$ for a.e. $\w\in [0,1)^d$. We remark that this condition does not mean that $V$ must be finitely generated.  

\begin{lemma}\label{An}
	Let $V$ be a shift-invariant space with range function $J$ such that $\dim(J(\w))<\infty$ a.e. $\w\in [0,1)^d$. Then, there exist functions $\{\varphi_i\}_{i\in\N}$ of $L^2(\R^d)$ and a family of disjoint measurable sets $\{A_n\}_{n\in\N_0}$ such that $[0,1)^d=\bigcup_{n\in\N_0} A_n$ and the following statements hold:
	\begin{enumerate}[\rm(i)]
		\item $\{T_k\varphi_i\,:\,i\in\N,\, k\in\Z^d\}$ is a Parseval frame of $V$,
		\item $\cT\varphi_i(\w)=0$ for $i>n$ and  a.e. $\w\in A_n$,
		\item $\{\cT\varphi_1(\w),\dots,\cT\varphi_n(\w)\}$ is an orthonormal basis of $J(\w)$ for a.e. $\w\in A_n$,
		\item $\dim J(\w) = n$ for a.e. $\w\in A_n$.
	\end{enumerate}
\end{lemma}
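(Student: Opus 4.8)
The plan is to deduce the lemma directly from the principal decomposition of Theorem \ref{thm:principal-decomposition}. First I would apply that theorem to obtain Parseval frame generators $\{\varphi_i\}_{i\in\N}$ with $V=\bigpoplus_{i\in\N}S(\varphi_i)$ and nested spectra $\sigma(S(\varphi_{i+1}))\subseteq\sigma(S(\varphi_i))$. As noted after Theorem \ref{thm:principal-decomposition}, being a Parseval frame generator means $\|\cT\varphi_i(\w)\|=\car_{\sigma(S(\varphi_i))}(\w)\in\{0,1\}$ for a.e. $\w$, so each fiber $\cT\varphi_i(\w)$ is either the zero vector or a unit vector, the latter happening exactly on $\sigma(S(\varphi_i))$. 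Statement (i) is then immediate: since the $S(\varphi_i)$ are mutually orthogonal with orthogonal sum $V$, for every $f\in V$ one has $\sum_{i,k}|\langle f,T_k\varphi_i\rangle|^2=\sum_i\|P_{S(\varphi_i)}f\|^2=\|f\|^2$, so $E(\{\varphi_i\})$ is a Parseval frame of $V$.

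Next I would record the pointwise orthogonality of the fibers. For $i\neq j$, $S(\varphi_i)\subseteq S(\varphi_j)^\perp$, so Lemma \ref{range-properties}(i) gives $J_{S(\varphi_i)}(\w)\subseteq\bigl(J_{S(\varphi_j)}(\w)\bigr)^\perp$ for a.e. $\w$; hence $\cT\varphi_i(\w)\perp\cT\varphi_j(\w)$ a.e. Thus, for a.e. fixed $\w$, the nonzero vectors among $\{\cT\varphi_i(\w)\}_i$ form an orthonormal system contained in $J(\w)$.

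I would then define the sets $A_n=\{\w\in[0,1)^d:\dim J(\w)=n\}$ for $n\in\N_0$. These are measurable, since $\dim J(\w)=\sum_i\|\cT\varphi_i(\w)\|$ is a countable sum of measurable indicator functions, and pairwise disjoint; as $\dim J(\w)<\infty$ a.e., they cover $[0,1)^d$ up to a null set, which I absorb into $A_0$. The key point is the nesting hypothesis: it forces the index set $\{i:\w\in\sigma(S(\varphi_i))\}$ to be an initial segment $\{1,\dots,n(\w)\}$, whose cardinality equals $\sum_i\|\cT\varphi_i(\w)\|=\dim J(\w)$. Therefore, for a.e. $\w\in A_n$, $\cT\varphi_i(\w)\neq0$ precisely when $i\le n$, which is (ii); statement (iv) holds by the definition of $A_n$; and combining the two with the orthonormality established above shows that $\cT\varphi_1(\w),\dots,\cT\varphi_n(\w)$ are $n$ orthonormal vectors inside the $n$-dimensional space $J(\w)$, hence an orthonormal basis of it, giving (iii).

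The argument is essentially bookkeeping, so the main point to watch is the interplay between the nesting condition and the dimension function $\w\mapsto\dim J(\w)$: it is exactly the nesting that pins the nonvanishing fibers to the first $n$ indices on $A_n$, and without it (ii) and (iii) would fail. The only other care needed is to collect the countably many a.e.\ exceptional sets---from the Parseval generator identity, from the orthogonality across all pairs $(i,j)$, and from the dimension formula---into a single null set so that (i)--(iv) hold simultaneously; as there are countably many conditions this is harmless.
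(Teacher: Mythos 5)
Your proof is correct and follows essentially the same route as the paper: both rest entirely on the principal decomposition of Theorem \ref{thm:principal-decomposition}, and your sets $A_n=\{\w:\dim J(\w)=n\}$ coincide up to null sets with the paper's $A_n=\sigma(S(\varphi_n))\setminus\sigma(S(\varphi_{n+1}))$ (a point the paper itself records in the remark following the lemma). The only cosmetic difference is the direction of the bookkeeping---you make (iv) definitional and derive (ii) from the nesting of spectra, and you get orthonormality of the fibers from Lemma \ref{range-properties}(i) rather than by pushing the Parseval frame property to the fibers as the paper does---but the substance is identical.
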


\begin{proof}
	Let $\{\varphi_i\}_{i\in\N}\subset L^2(\R^d)$ be the functions from the decomposition of $V$ as in Theorem \ref{thm:principal-decomposition}. As $\varphi_i$ is a Parseval frame generator of $S(\varphi_i)$ for every $i\in\N$, it is easy to see that $\{T_k\varphi_i\,:\,i\in\N,\, k\in\Z^d\}$ is a Parseval frame of $V$. In addition, this  implies that $\{\cT \varphi_i(\w)\}_{i\in\N}$ is a Parseval frame of $J(\w)$ for a.e. $\w\in[0,1)^d$ (see Theorem 2.3 in \cite{B}).
	
	Define $A_0 = [0,1)^d\setminus \sigma(V)$. Now, for $n>0$ define the sets $A_n=\sigma(S(\varphi_n))\setminus \sigma(S(\varphi_{n+1}))$. Since $\sigma(S(\varphi_{i+1}))\subset \sigma(S(\varphi_i))$ for all $i\in\N$, the sets $\{A_n\}_{n\in\N_0}$ are pairwise disjoint. Moreover, given that $\dim J(\w)<\infty$ for a.e. $\w\in[0,1)^d$, the sum $\sum_{i\in\N} \|\cT\varphi_i(\w)\|$ must be finite for a.e. $\w\in [0,1)^d$ and so $\bigcap_{i\in\N}\sigma(S(\varphi_i))=\emptyset$. From this, we can finally conclude that $[0,1)^d=\bigcup_{n\in\N_0} A_n$.
	
	Fix $n>0$. By definition of the sets $A_n$, we have that (ii) is satisfied. Thus, using that (i) holds, the sum in \eqref{principal-decomposition} is orthogonal and $\cT$ is an isometry, we get that the system $\{\cT\varphi_1(\w),\dots,\cT\varphi_n(\w)\}$ must be an orthonormal basis of $J(\w)$ and so $\dim J(\w)=n$ for a.e. $\w\in A_n$. For $n=0$, $J(\w)=\{0\}$ for a.e. $\w\in A_0$.
\end{proof}

\begin{remark}
	Let $n\in\N$. Given a measurable set $B\subseteq [0,1)^d$ such that $\dim J(\w)=n$ for a.e. $\w\in[0,1)^d$, then $B\subseteq A_n$. In other words, $A_n=\{\w\in[0,1)^d\,:\, \dim J(\w)=n\}$.
\end{remark}

\section{Shift-preserving operators}\label{section-SP}

We describe here some properties of shift-preserving operators that we will use in the next section.

\begin{definition}
Let $V\subset L^2(\mathbb R^d)$ be a shift-invariant space and $L:V\rightarrow L^2(\mathbb R^d)$ be a bounded operator. We say that $L$ is {\it shift preserving} if $LT_k = T_k L$ for all $k\in\mathbb Z^d$.
\end{definition}

\begin{definition}\label{range operator}
Let $V$ be a shift-invariant space with range function $J$. A {\it range operator} on $J$ is a mapping
$$R: [0,1)^d\rightarrow \left\{ \text{bounded operators defined on closed subspaces of } \ell^2(\mathbb Z^d) \right\},$$
such that the domain of $R(\omega)$ is $J(\omega)$ for a.e. $\omega\in[0,1)^d$. 
\end{definition}

We say that $R$ is {\it measurable} if  $\omega \mapsto \langle R(\omega) P_{J(\omega)} u,v\rangle$ is a measurable scalar function for every $u,v\in\ell^2(\mathbb Z^d)$.

In \cite{B}, Bownik proved that given a bounded, shift-preserving operator $L:V\rightarrow L^2(\mathbb R^d)$, there exists a measurable range operator $R$ on $J$ such that 
\begin{equation}\label{prop of R}
(\mathcal T\circ L) f(\omega) = R(\omega) \left(\mathcal T f(\omega)\right),
\end{equation}
for a.e. $\omega\in\mathbb [0,1)^d$ and $f\in V$.

Conversely, given a measurable range operator $R$ on $J$ with $$\underset{\omega\in [0,1)^d}{\esssup} \|R(\omega)\|<\infty,$$ there exists a bounded shift preserving operator $L: V\to L^2(\R^d)$ such that  \eqref{prop of R} holds. The correspondence between $L$ and $R$ is one-to-one under the convention that the range operators are identified if they are equal a.e. $\w\in[0,1)^d$. Furthermore, he also proved the following relation between the operator norm of $L$ and its associated range operator, 
\begin{equation}\label{norm of R and L}
\|L\| = \underset{\omega\in [0,1)^d}{\esssup} \|R(\omega)\|.
\end{equation}

Let us now focus on the particular case where $L:V\rightarrow V$. In this setting, $R(\omega):J(\omega)\rightarrow J(\omega)$ for a.e. $\omega\in[0,1)^d$. The following result, that was proved in \cite{B}, shows that some properties of the operator $L$ are intrinsically related to the properties of its fibers. 

\begin{theorem}\label{adjoint L}
Let $V$ be a shift-invariant space and $L:V\rightarrow V$ a shift-preserving operator with associated range operator $R$. Then, the adjoint operator $L^*:V\rightarrow V$ is also a shift-preserving operator and its corresponding range operator $R^*$ satisfies that $R^*(\omega) = (R(\omega))^*$ for a.e. $\omega\in [0,1)^d$.  As a consequence,  $L$ is self-adjoint if and only if  $R(\omega)$ is self-adjoint for a.e. $\omega \in [0,1)^d$, and  $L$ is a normal operator if and only if  $R(\omega)$ is a normal operator for a.e. $\omega \in [0,1)^d$.  
\end{theorem}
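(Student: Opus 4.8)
The plan is to establish the result in three steps: that $L^*$ is again shift-preserving, that its range operator is the fiberwise adjoint of $R$, and then to read off the self-adjoint and normal statements from the one-to-one correspondence between shift-preserving operators and range operators.

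\emph{Step 1 ($L^*$ is shift-preserving).} Since $V$ is invariant under every $T_k$ and every $T_{-k}$, each $T_k$ restricts to a unitary operator on $V$ whose adjoint in $V$ is $T_{-k}$. Taking adjoints (in $V$) of the intertwining identity $LT_k=T_kL$ yields $T_{-k}L^*=L^*T_{-k}$ for all $k\in\Z^d$, and replacing $k$ by $-k$ gives $T_kL^*=L^*T_k$. Thus $L^*$ is a bounded shift-preserving operator on $V$ and, by the result of Bownik recalled after \eqref{prop of R}, it possesses a measurable range operator, which I denote $\widetilde R$; it remains to identify $\widetilde R(\omega)$ with $(R(\omega))^*$.

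\emph{Step 2 (fiberwise adjoint, with the measurability as the crux).} I would transport the computation through the isometry $\mathcal T$ of Proposition \ref{isometria}, which maps $V$ onto $M_J$. For $f,g\in V$, applying \eqref{prop of R} to $L$ and to $L^*$ rewrites $\langle Lf,g\rangle=\langle f,L^*g\rangle$ as
\[
\int_{[0,1)^d}\langle R(\omega)\mathcal T f(\omega),\mathcal T g(\omega)\rangle\,d\omega
=\int_{[0,1)^d}\langle \mathcal T f(\omega),\widetilde R(\omega)\mathcal T g(\omega)\rangle\,d\omega .
\]
To localize, I use that $M_J$ is stable under multiplication by $L^\infty([0,1)^d)$-functions (each $J(\omega)$ being a subspace): replacing $f$ by $\mathcal T^{-1}(\mathbf 1_E\,\mathcal T f)\in V$ for an arbitrary measurable $E$ confines both integrals to $E$, so the two integrands agree for a.e. $\omega$ (applying to real and imaginary parts the fact that a function with vanishing integral over every set is null a.e.). The main obstacle is that this exceptional null set a priori depends on the pair $(f,g)$. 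I would resolve it exactly as the introduction anticipates for measurability questions: fix a countable generating set $\Phi$ with $V=S(\Phi)$, so that $\{\mathcal T\varphi(\omega):\varphi\in\Phi\}$ has dense span in $J(\omega)$ for a.e. $\omega$; discarding the countable union of null sets attached to pairs $(\varphi,\varphi')\in\Phi\times\Phi$ leaves a single full-measure set on which $\langle R(\omega)\mathcal T\varphi(\omega),\mathcal T\varphi'(\omega)\rangle=\langle \mathcal T\varphi(\omega),\widetilde R(\omega)\mathcal T\varphi'(\omega)\rangle$ for all such pairs. Extending by sesquilinearity to the dense span and then by continuity, using the boundedness of $R(\omega)$ and $\widetilde R(\omega)$, gives $\langle R(\omega)u,v\rangle=\langle u,\widetilde R(\omega)v\rangle$ for all $u,v\in J(\omega)$; that is, $\widetilde R(\omega)=(R(\omega))^*$ a.e.

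\emph{Step 3 (consequences).} The self-adjoint case is then immediate: $L$ is self-adjoint iff $L=L^*$, which by the one-to-one correspondence between shift-preserving operators and their range operators is equivalent to $R(\omega)=\widetilde R(\omega)=(R(\omega))^*$ for a.e. $\omega$, i.e. to $R(\omega)$ being self-adjoint a.e. For normality I would first note that iterating \eqref{prop of R} shows the range operator of a composition to be the fiberwise composition: the range operator of $L^*L$ is $(R(\omega))^*R(\omega)$ and that of $LL^*$ is $R(\omega)(R(\omega))^*$. Hence $L^*L=LL^*$ if and only if $(R(\omega))^*R(\omega)=R(\omega)(R(\omega))^*$ a.e., i.e. $R(\omega)$ is normal a.e., again by the one-to-one correspondence. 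I expect Step 2, specifically the passage from an $(f,g)$-dependent null set to a single full-measure set valid for all fiber vectors, to be the only genuinely delicate point.
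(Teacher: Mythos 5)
Your proof is correct, but there is nothing in the paper to compare it against: the paper does not prove this theorem at all, it states it as a result quoted from Bownik \cite{B} (``The following result, that was proved in \cite{B}, \dots''). Your blind argument is essentially the standard one, and essentially Bownik's original proof. The only substantive difference is the localization device in Step 2: Bownik fixes the pair $(f,g)$, replaces $f$ by $T_kf$ so that the fiberized identity reads $\int e_k(\omega)\bigl(\langle R(\omega)\mathcal Tf(\omega),\mathcal Tg(\omega)\rangle-\langle \mathcal Tf(\omega),\widetilde R(\omega)\mathcal Tg(\omega)\rangle\bigr)\,d\omega=0$ for all $k\in\Z^d$, and concludes that the $L^1$ integrand vanishes a.e.\ because all its Fourier coefficients vanish; you instead multiply the fiber by $\chi_E$ for arbitrary measurable $E$, exploiting that $M_J$ is stable under multiplication by bounded measurable scalar functions (modulation-invariance being the special case of the $e_k$). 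Both devices are standard and equivalent here, and your subsequent passage from $(f,g)$-dependent null sets to a single full-measure set via a countable generating set, sesquilinearity, and boundedness is exactly what is needed. One small point to make explicit in Step 3: to identify the range operator of $L^*L$ with $\omega\mapsto (R(\omega))^*R(\omega)$ through the one-to-one correspondence, one should either verify that this fiberwise product is a measurable range operator or, more simply, rerun your Step 2 argument (the putative range operator $S$ of $L^*L$ satisfies $S(\omega)\mathcal T\varphi(\omega)=(R(\omega))^*R(\omega)\mathcal T\varphi(\omega)$ for all $\varphi$ in a countable generating set, a.e.\ $\omega$, and both sides are bounded operators on $J(\omega)$, so they agree on $J(\omega)$ a.e.); this is routine and not a genuine gap.
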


When a shift-preserving operator acts on a  shift-invariant space satisfying that
	$\dim J(\w)<\infty$ for a.e. $\w\in [0,1)^d$, the fibers of its corresponding range operator can be identified with matrices. This is done explicitly in the next proposition.
	
	\begin{proposition}\label{matrix R}
		Let $V$ be a shift-invariant space with range function $J$ such that $\dim(J(\w))<\infty$ a.e. $\w\in [0,1)^d$ and $L:V\rightarrow V$ a shift-preserving operator with associated range operator $R$. Then, $R(\w)$ has a matrix representation for a.e. $\w\in [0,1)^d$ with measurable entries. More precisely, if $\dim(J(\w))=n$ a.e. $\w\in B$ where $B\subseteq [0,1)^d$ is measurable, then, there exist $n^2$ measurable bounded functions defined on $B$, $\{m_{i,j}\}_{i,j=1}^n$ such that the matrix representation of $R(\w)$ is 
		\begin{equation}\label{eq:matrix}
		\begin{bmatrix}
		m_{1,1}(\omega)  & m_{1,2}(\omega) & \cdots & m_{1,n}(\omega) \\
		m_{2,1}(\omega)  & m_{2,2}(\omega) & \cdots & m_{2,n}(\omega) \\
		\vdots     &           & \ddots &               \\
		m_{n,1}(\omega)  & m_{n,2}(\omega) & \cdots & m_{n,n}(\omega) \\
		\end{bmatrix},
		\end{equation}
		for a.e. $\omega\in B$.
	\end{proposition}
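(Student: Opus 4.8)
The plan is to produce the matrix entries from an $\w$-dependent orthonormal basis of the fiber spaces and then verify their measurability and boundedness. By the Remark following Lemma~\ref{An} we may assume, up to a null set, that $B\subseteq A_n$, where $A_n=\{\w\in[0,1)^d:\dim J(\w)=n\}$. Applying Lemma~\ref{An} to $V$ (legitimate since $\dim J(\w)<\infty$ a.e.), we obtain functions $\{\varphi_i\}_{i\in\N}\subset L^2(\R^d)$ such that $\{\cT\varphi_1(\w),\dots,\cT\varphi_n(\w)\}$ is an orthonormal basis of $J(\w)$ for a.e. $\w\in A_n$, hence for a.e. $\w\in B$. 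Since $R(\w)$ maps $J(\w)$ into $J(\w)$, the natural candidate for the entries of its matrix in this basis is
$$
m_{i,j}(\w):=\langle R(\w)\cT\varphi_j(\w),\,\cT\varphi_i(\w)\rangle,\qquad 1\le i,j\le n.
$$
By construction $[m_{i,j}(\w)]$ is precisely the representation of $R(\w)$ with respect to $\{\cT\varphi_i(\w)\}_{i=1}^n$, so the only remaining points are that each $m_{i,j}$ is measurable and bounded on $B$.

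Boundedness is immediate: the $\cT\varphi_i(\w)$ are unit vectors and $\|R(\w)\|\le\esssup_{\w\in[0,1)^d}\|R(\w)\|=\|L\|<\infty$ by \eqref{norm of R and L}, so the Cauchy--Schwarz inequality yields $\abs{m_{i,j}(\w)}\le\|L\|$ for a.e. $\w\in B$.

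The main obstacle is measurability, because the hypothesis on $R$ only provides measurability of $\w\mapsto\langle R(\w)P_{J(\w)}u,v\rangle$ for \emph{fixed} vectors $u,v\in\ell^2(\Z^d)$, whereas here both arguments vary with $\w$. I would bridge this gap by approximating the fibers $\cT\varphi_j(\w)$ by finite truncations in the standard basis $\{e_m\}_{m\in\Z^d}$ of $\ell^2(\Z^d)$. Writing $u_m(\w)=\langle\cT\varphi_j(\w),e_m\rangle$ and $v_l(\w)=\langle\cT\varphi_i(\w),e_l\rangle$ — measurable scalar functions, since $\cT\varphi_j,\cT\varphi_i\in L^2([0,1)^d,\ell^2(\Z^d))$ — and using that $\cT\varphi_j(\w)\in J(\w)$ so that $P_{J(\w)}\cT\varphi_j(\w)=\cT\varphi_j(\w)$, the finite sums
$$
\sum_{\abs{m}\le N}\sum_{\abs{l}\le N} u_m(\w)\,\overline{v_l(\w)}\,\langle R(\w)P_{J(\w)}e_m,e_l\rangle
$$
are measurable, being finite combinations of products of measurable functions (the last factor measurable by the measurability of $R$). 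As $N\to\infty$ the truncations converge to $\cT\varphi_j(\w)$ and $\cT\varphi_i(\w)$ in $\ell^2(\Z^d)$, and since $R(\w)P_{J(\w)}$ is bounded for a.e. $\w$, continuity of $R(\w)P_{J(\w)}$ and of the inner product shows these sums converge pointwise a.e. to $m_{i,j}(\w)$. A pointwise a.e. limit of measurable functions being measurable, we conclude that $m_{i,j}$ is measurable, which establishes the matrix representation \eqref{eq:matrix} on $B$.
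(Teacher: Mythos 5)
Your proof is correct and takes essentially the same route as the paper's: Lemma~\ref{An} supplies the measurable orthonormal bases $\{\cT\varphi_1(\w),\dots,\cT\varphi_n(\w)\}$ on $A_n\supseteq B$, the entries are defined by the same formula $m_{i,j}(\w)=\langle R(\w)\cT\varphi_j(\w),\cT\varphi_i(\w)\rangle$, and boundedness follows from \eqref{norm of R and L} exactly as in the paper. The only difference is that you justify measurability of the entries in detail (via truncation in the standard basis of $\ell^2(\Z^d)$ and passage to the limit), a step the paper simply declares ``clearly measurable''; your argument for it is sound.
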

	\begin{proof}
		Let $\{\varphi_i\}_{i\in \N}$ and $\{A_n\}_{n\in\N_0}$ as in Lemma \ref{An}.
		For a fixed  $n\in\N$, we see that for a.e. $\w\in A_n$, $R(\w)$ has a matrix representation $[R](\w)$
		relative to the orthonormal basis  $\{\mathcal T\varphi_1(\omega),\dots,\mathcal T\varphi_n(\omega)\}$ of $J(\w)$ with entries 
		$$m_{i,j}(\w):=([R\,](\omega))_{i,j} =\left\langle R(\w)\mathcal T\varphi_j(\w),\mathcal T\varphi_i(\w) \right\rangle,$$
		which are clearly measurable. Notice that since $L$ is a bounded operator and (\ref{norm of R and L}) holds, then $|m_{i,j}(\w)|\leq \|L\|$ for $i,j=1,\dots,n$ and for a.e. $\w\in A_n$.
		
		Given that every $B\subseteq [0,1)^d$ where $\dim J(\w) = n$ for a.e. $\w\in B$ is included in $A_n$, the statement of the proposition holds.
		\end{proof}
	
	We state now two measurability results related to the range operator, whose proofs we postpone until Subsection \ref{section-measurability-1}.
	
	\begin{proposition}\label{measurable-prop-range}
		Let $R(\omega):J(\omega)\rightarrow J(\omega)$ be a measurable range operator on a range function $J$ satisfying $\dim J(\w)<\infty$ a.e. $\w\in [0,1)^d$. Then the following statements hold:
		\begin{enumerate}[\rm(i)]
			\item The mapping $\omega \mapsto\ker(R(\omega))$, $\omega\in [0,1)^d$ is a measurable range function.
			\item\label{measurable-prop-range-2} If $R(\w)$ is invertible for a.e. $\w\in [0,1)^d$, then $\omega \mapsto (R(\omega))^{-1}$, $\omega\in [0,1)^d$ is a measurable range operator.
		\end{enumerate}
	\end{proposition}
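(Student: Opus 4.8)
The plan is to reduce both statements to finite-dimensional linear algebra carried out fiberwise, using the partition $[0,1)^d=\bigcup_{n\in\N_0}A_n$ together with the measurable orthonormal bases $\{\cT\varphi_1(\w),\dots,\cT\varphi_n(\w)\}$ of $J(\w)$ supplied by Lemma \ref{An}. Since both notions of measurability are tested separately for each pair $u,v\in\ell^2(\Z^d)$, and the sets $A_n$ form a countable measurable partition of $[0,1)^d$ up to a null set, it suffices to verify the two claims on each $A_n$. On $A_n$, Proposition \ref{matrix R} provides a matrix representation $[R](\w)=(m_{i,j}(\w))_{i,j=1}^n$ of $R(\w)$ with measurable, bounded entries, and I will transport everything between the fiber $J(\w)\subset\ell^2(\Z^d)$ and $\CC^n$ through the coordinate functions $\w\mapsto\la u,\cT\varphi_i(\w)\ra$, which are measurable because each $\cT\varphi_i$ is an element of $L^2([0,1)^d,\ell^2(\Z^d))$.

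For part (i), the key observation is the identity
\begin{equation*}
P_{\ker [R](\w)}=\lim_{s\to\infty}\bigl(I_n+s\,[R](\w)^*[R](\w)\bigr)^{-1},
\end{equation*}
which follows from the spectral decomposition of the positive semidefinite matrix $[R](\w)^*[R](\w)$, whose kernel coincides with $\ker[R](\w)$: the eigenvalue $0$ contributes a factor tending to $1$, while every positive eigenvalue contributes a factor tending to $0$. For each fixed $s\in\N$ the matrix $I_n+s\,[R](\w)^*[R](\w)$ is positive definite, hence invertible, and its inverse has measurable entries by Cramer's rule, since the adjugate entries are polynomials in the $m_{i,j}(\w)$ and the determinant is a.e. nonzero. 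Taking the pointwise limit over $s\in\N$ shows that the matrix $Q(\w):=P_{\ker[R](\w)}=(q_{i,j}(\w))$ has measurable entries. Transporting back, for $u,v\in\ell^2(\Z^d)$ one has
\begin{equation*}
\la P_{\ker R(\w)}u,v\ra=\sum_{i,j=1}^n q_{i,j}(\w)\,\la u,\cT\varphi_j(\w)\ra\,\overline{\la v,\cT\varphi_i(\w)\ra},
\end{equation*}
a finite sum of products of measurable functions, whence $\w\mapsto\ker R(\w)$ is a measurable range function.

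For part (ii), assume $R(\w)$ is invertible a.e.; then on $A_n$ the matrix $[R](\w)$ is invertible a.e., and again by Cramer's rule $[R](\w)^{-1}=(\det[R](\w))^{-1}\,\mathrm{adj}\,[R](\w)$ has measurable entries. Defining $(R(\w))^{-1}$ as the operator on $J(\w)$ whose matrix in the basis $\{\cT\varphi_i(\w)\}$ is $[R](\w)^{-1}$, the same coordinate transport as above expresses $\la (R(\w))^{-1}P_{J(\w)}u,v\ra$ as a finite sum of products of measurable functions, so $\w\mapsto(R(\w))^{-1}$ is a measurable range operator. I expect the main obstacle to be bookkeeping rather than conceptual: carefully justifying the passage between the intrinsic objects $\ker R(\w)$ and $(R(\w))^{-1}$, which live in the varying fiber $J(\w)$, and their concrete matrix counterparts in $\CC^n$, and confirming that both the coordinate maps $\w\mapsto\la u,\cT\varphi_i(\w)\ra$ and the reconstruction $\sum_i(\cdots)\,\cT\varphi_i(\w)$ preserve measurability. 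The limit formula in (i) is precisely what lets me avoid the more delicate alternative of partitioning $A_n$ according to the (variable) rank of $R(\w)$ and exhibiting measurable bases of the kernels on each piece.
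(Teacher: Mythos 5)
Your proposal is correct. Part (ii) coincides with the paper's own argument: reduction to the sets $A_n$ of Lemma \ref{An}, passage to the matrix $[R\,](\w)$ via the unitary change of basis, and Cramer's rule to get measurable entries of $[R\,](\w)^{-1}$. For part (i), however, you take a genuinely different route. The paper stratifies each $A_n$ by the rank of $[R\,](\w)$ (Lemma \ref{set of rank k}) and, on each stratum, invokes Lemma \ref{measurable ker} to build explicit measurable orthonormal bases of $\ker[R\,](\w)$ --- measurably selecting an invertible $k\times k$ submatrix, solving for the free coordinates, and applying Gram--Schmidt --- from which measurability of the projection entries follows. You bypass the rank stratification entirely through the identity
\begin{equation*}
P_{\ker [R\,](\w)}=\lim_{s\to\infty}\bigl(I_n+s\,[R\,](\w)^*[R\,](\w)\bigr)^{-1},
\end{equation*}
which is valid because $[R\,]^*[R\,]$ is positive semidefinite with $\ker\bigl([R\,]^*[R\,]\bigr)=\ker[R\,]$, each matrix $I_n+s\,[R\,]^*[R\,]$ is positive definite (its determinant is in fact bounded below by $1$ everywhere, not merely a.e.\ nonzero as you wrote), and pointwise limits of measurable functions are measurable. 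Your coordinate-transport formula for $\langle P_{\ker R(\w)}u,v\rangle$ is the correct bridge between the fiber and $\CC^n$, legitimate precisely because the change-of-basis map $\nu(\w)$ of \eqref{eq:cambio-de-base} is unitary, so orthogonal projections conjugate correctly; this is the same bridge the paper uses. The trade-off between the two arguments: yours is shorter and self-contained, needing neither Lemma \ref{set of rank k} nor Lemma \ref{measurable ker}, and only elementary facts (Cramer's rule, closure of measurability under limits); the paper's construction yields strictly more information as a byproduct, namely measurable vector fields $v_1(\w),\dots,v_{n-k}(\w)$ forming orthonormal bases of the kernels, which is the kind of statement one wants when constructing measurable eigenvectors rather than merely measurable projections. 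For the purposes of Proposition \ref{measurable-prop-range} itself, your argument fully suffices.
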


		In the same spirit of Theorem \ref{adjoint L}, we will show that when $\dim(J(\w))<\infty$ for a.e. $\w\in [0,1)^d$, the invertibility of a shift-preserving operator can be
		deduced from the invertibility of its fibers.

\begin{theorem}\label{L^-1 sp}
	Let $V$ be a shift-invariant space with range function $J$ such that $\dim(J(\w))<\infty$ for a.e. $\w\in [0,1)^d$ and $L:V\rightarrow V$ a shift-preserving operator with associated range operator $R$. Then, the following statements hold:
	\begin{enumerate}[\rm(i)]
		\item If $L$ is invertible, then $L^{-1}$ is also shift preserving.
		\item $L$ is invertible if and only if $R(\w)$ is invertible for a.e. $\w\in[0,1)^d$ and there exist a constant $C>0$ such that $R(\w)$ is uniformly bounded from below by $C$ (i.e. for a.e. $\w\in[0,1)^d$ we have that $\|R(\w)a\|\geq C\|a\|$ for all $a\in J(\w)$). In that case, if we denote by $R^{-1}$ the range operator associated to $L^{-1}$, we have that $(R(\w))^{-1}=R^{-1}(\w)$ for a.e. $\w\in[0,1)^d$.
	\end{enumerate}  
\end{theorem}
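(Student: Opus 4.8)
The plan is to reduce both parts to the one-to-one, norm-preserving correspondence $L\leftrightarrow R$ between bounded shift-preserving operators on $V$ and measurable range operators on $J$ with $\esssup_{\w}\|R(\w)\|<\infty$, recorded in \eqref{prop of R} and \eqref{norm of R and L}. The structural fact I would isolate at the outset is that this correspondence is multiplicative: applying \eqref{prop of R} twice shows that if $L_1,L_2:V\to V$ are shift preserving with range operators $R_1,R_2$, then $L_1L_2$ has range operator $\w\mapsto R_1(\w)R_2(\w)$, while $\mathrm{Id}_V$ corresponds to the fiberwise identity $\w\mapsto\mathrm{Id}_{J(\w)}$ (since $\mathcal T f(\w)\in J(\w)$). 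With this in hand, part (i) is immediate: as $L$ is a bounded bijection of the Hilbert space $V$, the bounded inverse theorem gives that $L^{-1}:V\to V$ is bounded, and since each $T_k$ maps the shift-invariant space $V$ into itself, multiplying $LT_k=T_kL$ on the left and right by $L^{-1}$ yields $T_kL^{-1}=L^{-1}T_k$ for all $k\in\Z^d$, so $L^{-1}$ is shift preserving.

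For the forward implication of part (ii), I would invoke part (i): $L^{-1}$ is a bounded shift-preserving operator and hence carries a measurable range operator $\tilde R$. Feeding $LL^{-1}=L^{-1}L=\mathrm{Id}_V$ through multiplicativity gives $R(\w)\tilde R(\w)=\tilde R(\w)R(\w)=\mathrm{Id}_{J(\w)}$ for a.e.\ $\w$; since $J(\w)$ is finite dimensional, this says precisely that $R(\w)$ is invertible with $R(\w)^{-1}=\tilde R(\w)$, which identifies $R^{-1}(\w)=R(\w)^{-1}$. For the uniform lower bound I would apply the norm identity \eqref{norm of R and L} to $L^{-1}$: setting $C:=\|L^{-1}\|^{-1}$, one has $\|R(\w)^{-1}\|=\|\tilde R(\w)\|\le\|L^{-1}\|=C^{-1}$ for a.e.\ $\w$, whence for $a\in J(\w)$ the identity $a=R(\w)^{-1}R(\w)a$ gives $\|a\|\le C^{-1}\|R(\w)a\|$, i.e.\ $\|R(\w)a\|\ge C\|a\|$.

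For the converse, assume $R(\w)$ is invertible a.e.\ and $\|R(\w)a\|\ge C\|a\|$ for a.e.\ $\w$ and all $a\in J(\w)$. Reversing the previous computation, this lower bound is equivalent to $\|R(\w)^{-1}\|\le C^{-1}$ a.e., so the fiberwise inverse $\w\mapsto R(\w)^{-1}$ has uniformly bounded norm; by Proposition \ref{measurable-prop-range} (ii) it is moreover a measurable range operator, hence the range operator of some bounded shift-preserving operator $M:V\to V$. Multiplicativity shows $ML$ and $LM$ both have range operator $\mathrm{Id}_{J(\w)}$, so injectivity of the correspondence forces $ML=LM=\mathrm{Id}_V$; thus $L$ is invertible with $L^{-1}=M$ and range operator $R(\w)^{-1}$. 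I expect the only genuinely delicate point to be the measurability of $\w\mapsto R(\w)^{-1}$, which is exactly what Proposition \ref{measurable-prop-range} (ii) supplies; the rest is bookkeeping converting the operator-norm lower bound into a norm bound on $R(\w)^{-1}$ and invoking multiplicativity. It is worth stressing that fiberwise invertibility alone does not suffice — the uniform constant $C$ is what upgrades pointwise injectivity of the fibers to boundedness of the global inverse.
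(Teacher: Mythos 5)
Your proposal is correct and follows essentially the same route as the paper: part (i) via the same algebraic manipulation $L^{-1}T_k = L^{-1}T_kLL^{-1} = T_kL^{-1}$, and part (ii) by composing $R$ with the range operator of $L^{-1}$ (resp.\ with the fiberwise inverse supplied by Proposition \ref{measurable-prop-range}(ii)) and converting between $\|L^{-1}\|$ and the lower bound $C$ through \eqref{norm of R and L}. The only cosmetic difference is that you package the argument as ``multiplicativity plus injectivity of the correspondence $L\leftrightarrow R$,'' whereas the paper runs the identical computation directly on fibers and concludes via the injectivity of $\mathcal T$; these are interchangeable.
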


\begin{proof}
	Assume that $L$ is invertible. Then, the inverse of $L$ is a bounded operator. Now, for every $f\in V$ and $k\in \Z^d$ we see that
	$$L^{-1}T_k f = L^{-1}T_k LL^{-1} f = L^{-1} L T_k L^{-1}f = T_k L^{-1}f,$$
	then $L^{-1}$ is a shift-preserving operator. This proves (i).
	
	For (ii), first assume that $L$ is invertible and denote by $R^{-1}$ the range operator of $L^{-1}$. It is not difficult to prove that $\w\mapsto R^{-1}(\w)R(\w)$ and $\w\mapsto R(\w)R^{-1}(\w)$ are measurable and uniformly bounded range operators on $J$. Furthermore, their associated shift-preserving operators are $L^{-1}L$ and $LL^{-1}$ respectively.
	Hence, we have that $R^{-1}(\w)R(\w) = R(\w)R^{-1}(\w) = \mathcal {I}_\omega$, where $\mathcal {I}_\omega$ denotes the identity range operator on $J(\w)$ for a.e. $\w\in [0,1)^d$. 
	Thus, $R(\w)$ is invertible and $(R(\w))^{-1} = R^{-1}(\w)$ for a.e. $\w\in [0,1)^d$. In addition, $R(\w)$ is uniformly bounded from below, since $(R(\w))^{-1}$ is bounded uniformly from above by $\|L^{-1}\|$.
		
	For the converse, assume that $R(\w)$ is invertible for a.e. $\w\in [0,1)^d$ and is uniformly bounded from below by $C>0$. Then $\w\mapsto (R(\w))^{-1}$ is a measurable range operator on $J$ (by (ii) in Proposition \ref{measurable-prop-range}) and is uniformly bounded by $C$. Hence, it has a corresponding shift-preserving operator $\tilde{L}$ such that for every $f\in V$ and a.e. $\w\in [0,1)^d$, $(R(\w))^{-1}(\mathcal T f (\w)) = \mathcal T\tilde{L} f (\w)$. Given that 
	\begin{align*}
		\mathcal T f(\w) = (R(\w))^{-1}R(\w)(\mathcal T f(\w)) = \mathcal T \tilde{L}L f(\w),\\
		\mathcal T f(\w) = R(\w)(R(\w))^{-1}(\mathcal T f(\w)) = \mathcal T L\tilde{L} f(\w),
	\end{align*}
	for every $f\in V$ and a.e. $\w\in [0,1)^d$, we deduce that $\tilde{L}L = L\tilde{L}=\mathcal I_V$, and so $L$ is invertible and $\tilde{L}=L^{-1}$.
\end{proof}

\begin{remark}
	The condition of $R(\w)$ being uniformly bounded from below is natural as we know that, in fact, $L$ is bounded from below by a constant $C>0$ (i.e. $\|Lf\|\geq C\|f\|$ for all $f\in V$), if and only if $R(\w)$ is uniformly bounded from below by $C$ (see \cite{B}, Theorem 4.6).
	For example, for $d=1$, let $R$ be a range operator on a range function $J$ defined by $R(\w)a=\w a$ for $a\in J(\w)$ and $\w\in [0,1)$. This range operator is measurable and is  uniformly bounded from above, thus it has a corresponding shift-preserving operator $L$. Notice that $R(\w)$ is invertible for a.e. $\w\in [0,1)$. However, since it is not uniformly bounded from below, then $L$ is not bounded from below and is not invertible.
\end{remark}

We will now show some examples of shift-preserving operators and their corresponding range operator. The first example was shown in \cite{B}.

\begin{example}\label{ex:frame}
Let $I$ be an index set. Suppose $\Phi = \{\varphi_i\,:\,i\in I\}$ is a set of functions such that $E(\Phi)$ is a Bessel family. Then, the frame operator of $E(\Phi)$ is self-adjoint and shift preserving with corresponding range operator $R(\omega)$ given by the frame operator of $\{\mathcal T \varphi_i (\omega)\,:\,i\in I\}$ for a.e. $\omega\in [0,1)^d$.
\end{example}

\begin{example}\label{ex:projection}
Let $V$ be a shift-invariant space of $L^2(\R^d)$ with range function $J$. Denote by $P_V:L^2(\R^d)\to L^2(\R^d)$ the orthogonal projection of $L^2(\R^d)$ onto $V$. Then, $P_V$ is shift preserving and, by Lemma \ref{Helson projections}, its range operator is given by $R(\w)=P_{J(\w)}$ for a.e. $\w\in  [0,1)^d$. 
\end{example}
For the following example we need to introduce a new definition that will become crucial later on. 
 
\begin{definition}
	We say that a sequence $a=\{a_j\} \in \ell^{2}(\mathbb{Z}^d)$ is of {\it bounded spectrum} if $\hat{a} \in L^{\infty}([0,1)^{d})$, where $\hat{a}(\omega)= \sum_{j\in \mathbb{Z}^{d}} a_{j}e_j(\omega)$.
\end{definition}

\begin{example}\label{ex:convolution}
	Let $\Phi=\{ \varphi_1,...,\varphi_n\} \subset L^{2}(\mathbb{R}^d)$ be a set of functions such that $V=S(\Phi)$ and $E(\Phi)$ is a Riesz basis of $V$. For every $f\in V$, there exist unique sequences $b_{1},...b_{n}$ in $\ell^{2}(\mathbb{Z}^d)$ such that $f=\sum_{i=1}^{n} \sum_{k\in \mathbb{Z}^{d}} b_{i}(k) T_{k}\varphi_{i}$.
	Now, let $a_{1},...,a_{n}$ be sequences of bounded spectrum and let us define an operator $L:V\rightarrow V$ by
	\begin{equation}\label{eq:op-convolution}
	Lf := \sum_{i=1}^{n} \sum_{k\in \mathbb{Z}^{d}} (b_{i}\ast a_{i})(k) T_{k}\varphi_{i}.
	\end{equation}
	Then, $L$ is a bounded shift-preserving operator, and its range operator can be written as an $n\times n$ diagonal matrix with diagonal $\hat{a}_1(\omega),...,\hat{a}_n(\omega)$ in the basis $\{\mathcal{T}\varphi_{1}(\omega),...,\mathcal{T}\varphi_{n}(\omega)\}$ for a.e. $\omega\in [0,1)^{d}$.
\end{example}

   \begin{remark}
Given a shift-invariant space $V$ with range function $J$ and a shift-preserving operator $L:V\rightarrow V$, define the operator $\widetilde{L}:=\mathcal T L \mathcal T^{-1}: M_J\rightarrow M_J$, where $\mathcal T$ is the map defined in the Proposition \ref{isometria}. Then,  $M_J$ can be written as a {\it direct integral Hilbert space} and the operator $\widetilde{L}$ as a  {\it direct integral operator}, (see \cite{Dix} for definitions).

That is 

$$M_J = \int_{[0,1)^d}^{\oplus} J(\w) \,d\w, \quad  \quad{\text and }\quad\quad  \widetilde{L} = \int_{[0,1)^d}^{\oplus} R(\w) \,d\w, $$
where $R$ is the range operator related  to  $L.$ 

Thus, most of the results of this paper can be extended to the general case of direct integral operators acting on direct integral Hilbert spaces and hyper-invariant subspaces,
not necessarily associated with shift-invariant spaces. 
   \end{remark}

\section{$s$-eigenvalues and $s$-eigenspaces}\label{section-s-eigenvalues}

In this section, we introduce the concepts of {\it $s$-eigenvalue} and {\it $s$-eigenspace} for a shift-preserving  operator $L$ and we study the relation between these and the eigenvalues and eigenspaces of its range operator.
This requires to deal with delicate issues concerning the measurability of the eigenvalues of $R(\w)$ and their associated eigenspaces that we will study in Section \ref{section-measurability}.

We begin with  a proposition that will serve as a starting point to define $s$-eigenvalues of $L$.

For a function $\psi$  defined on $[0,1)^d$ with values in $\CC$, we define the multiplication operator $M_\psi :L^2([0,1)^d,\ell^2(\Z^d)) \rightarrow L^2([0,1)^d,\ell^2(\Z^d))$ as $M_\psi f =  \psi f .$ It is well known that $M_\psi $ is well defined and bounded if and only if $\psi \in L^{\infty}([0,1)^d)$.
Thus, we have the following proposition.

\begin{proposition}\label{bd Lambda_a}
Given $a=\{a_k\}_{k\in\Z^d}\in\ell^2(\mathbb Z^d)$,  the operator 
\begin{equation*}
\Lambda_a  = \mathcal T^{-1}M_{\hat a}\mathcal T
\end{equation*}
 is well-defined and bounded  from $L^2(\R^d)$ into $L^2(\R^d)$  if and only if the sequence $a$ is of bounded spectrum. 
\end{proposition}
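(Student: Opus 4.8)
The plan is to reduce the statement to the already-quoted characterization of bounded multiplication operators by exploiting that $\mathcal{T}$ is an isometric isomorphism. Since conjugation by an isometric isomorphism carries operators to operators while preserving boundedness and norm, the operator $\Lambda_a = \mathcal{T}^{-1} M_{\hat a} \mathcal{T}$ should inherit every relevant property directly from $M_{\hat a}$, and the equivalence will then be immediate from the $L^\infty$-criterion.

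First I would record that, because $a \in \ell^2(\Z^d)$ and $\{e_k\}_{k\in\Z^d}$ is an orthonormal basis of $L^2([0,1)^d)$, the series $\hat a(\omega) = \sum_{k\in\Z^d} a_k e_k(\omega)$ converges in $L^2([0,1)^d)$ and defines $\hat a$ as a measurable scalar function on $[0,1)^d$. This guarantees that the symbol of the multiplication operator $M_{\hat a}$ is meaningful, so that the composition defining $\Lambda_a$ makes formal sense regardless of whether $\hat a$ is essentially bounded.

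Next I would invoke Proposition \ref{isometria}: $\mathcal{T}$ is an isometric isomorphism of $L^2(\R^d)$ onto $L^2([0,1)^d,\ell^2(\Z^d))$, and hence so is $\mathcal{T}^{-1}$. Consequently $M_{\hat a} = \mathcal{T}\Lambda_a\mathcal{T}^{-1}$ and $\Lambda_a = \mathcal{T}^{-1} M_{\hat a}\mathcal{T}$ are related by a norm-preserving conjugation, so $\Lambda_a$ is well defined and bounded from $L^2(\R^d)$ into itself if and only if $M_{\hat a}$ is well defined and bounded from $L^2([0,1)^d,\ell^2(\Z^d))$ into itself, and in that case $\|\Lambda_a\| = \|M_{\hat a}\|$. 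The only point requiring a word of care is the phrase \emph{well defined}: if $M_{\hat a}$ fails to map $L^2([0,1)^d,\ell^2(\Z^d))$ into itself then the composition cannot define a bounded operator on $L^2(\R^d)$, and conversely; both directions follow because $\mathcal{T}$ is a bijection between the two spaces.

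Finally I would apply the quoted fact that $M_\psi$ is well defined and bounded precisely when $\psi \in L^\infty([0,1)^d)$, specialized to $\psi = \hat a$. Since the condition $\hat a \in L^\infty([0,1)^d)$ is exactly the definition of $a$ being of bounded spectrum, the desired equivalence follows. I do not expect a genuine obstacle here: the substantive input, namely the $L^\infty$-characterization of bounded multiplication operators, is available as a standing fact, and the proof contributes only the observation that an isometric isomorphism transports this property between the two Hilbert spaces. As a byproduct one also obtains the chain of norm equalities $\|\Lambda_a\| = \|M_{\hat a}\| = \|\hat a\|_{\infty}$.
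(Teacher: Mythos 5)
Your proposal is correct and follows essentially the same route as the paper: the paper's proof consists precisely of the observation that $M_{\hat a}$ and $\Lambda_a$ are unitarily equivalent via the isometric isomorphism $\mathcal T$, combined with the stated $L^\infty$-criterion for bounded multiplication operators. Your write-up simply makes explicit the details (measurability of $\hat a$, transport of well-definedness and boundedness under conjugation, and the norm equality $\|\Lambda_a\|=\|M_{\hat a}\|=\|\hat a\|_\infty$) that the paper leaves implicit.
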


\begin{proof}
The result is a consequence of the fact that $M_{\hat a}$ and $\Lambda_a$ are unitarily equivalent via the operator $\mathcal T.$  
 \end{proof}
Define $\mathcal{B}$  as the set of functions  $g\in L^2(\R^d)$ such that $\{T_kg\}_{k\in\Z^d}$ is a Bessel sequence. We note that the operators 
$\Lambda_a $ and 
$\sum_{k\in\mathbb Z^d} a_k T_k  $ coincide  for every element of $\mathcal{B}$.
 Since $\mathcal{B}$ is dense in $L^2(\R^d),$ we have the following  alternative definition for $\Lambda_a$: define $\Lambda_a$ as $\sum_{k\in\mathbb Z^d} a_k T_k $  for the elements in $\mathcal{B}$ and then extend it continuously from $\mathcal{B}$ to $L^2(\R^d)$. 
Hence, we will write
 formally  $\Lambda_a = \sum_{k\in\mathbb Z^d} a_k T_k.$
 
 \begin{remark}
 If $a\in\ell^2(\Z^d)$ is of bounded spectrum, then $\Lambda_a$ satisfies the intertwining property $\mathcal T(\Lambda_af)(\w)=\hat{a}(\w)\mathcal T f(\w)$ for every $f\in L^2(\R^d)$ and for a.e. $\w\in[0,1)^d$.

 Observe that, if $V$ is  shift invariant, $\Lambda_a: V\to V$  is a  shift-preserving  operator and its corresponding range operator is $R_a(\omega)=\hat{a}(\omega)\mathcal {I}_\omega$ for a.e. $\omega\in [0,1)^d$, where $\mathcal {I}_\omega$ denotes the identity operator on $J(\w)$.

Moreover, when $E(\Phi)$ is a frame of $V$ where $\Phi=\{\varphi_1,\dots, \varphi_n\}\subset L^2(\R^d)$ we have that every $f\in V$ can be written as $f=\sum_{i=1}^{n} \sum_{k\in \mathbb{Z}^{d}} b_{i}(k) T_{k}\varphi_{i}$ with $b_i\in\ell^2(\Z^d)$ for all $i=1,\dots,n$.  Then, 
$$\widehat{\Lambda_a f}(\w)=\hat{a}(\w)\sum_{i=1}^n\hat{b}_i(\w)\widehat{\varphi}_i(\w)=\sum_{i=1}^n\widehat{a* b_i}(\w)\widehat{\varphi}_i(\w)\quad\textrm{for a.e. }\w\in\R^d$$
and thus, $\Lambda_a f= \sum_{i=1}^n \sum_{k\in\Z^d}(a* b_i)(k)T_k\varphi_i$.
 \end{remark}

  \begin{definition}\label{def: s-eigenvalues}
  	Let $V$ be a shift-invariant space and $L:V\rightarrow V$ a bounded shift-preserving operator. Given $a\in \ell^{2}(\mathbb{Z}^d)$ a sequence of bounded spectrum, we say that $\Lambda_a$ is an {\it $s$-eigenvalue} of $L$ if 
	\begin{equation*} 
  	V_a := \ker\left(L - \Lambda_a\right)\neq\{0\}.
  	\end{equation*}
  	We call $V_a$ the {\it $s$-eigenspace} associated to $\Lambda_a$.
 \end{definition} 
 
   \begin{remark}
   	Observe that if $\lambda\in\mathbb C$ is an eigenvalue of $L$ (i.e. $\ker(L-\lambda\mathcal I)\neq \{0\}$), then it is an $s$-eigenvalue $\Lambda_a$ of $L$, taking the sequence $a=\lambda e_0$, where $e_0(k)=\delta_{0,k}$, $k\in\Z^d$. Thus, $s$-eigenvalues generalize the eigenvalues of $L$.
   \end{remark}
   
 Notice that $V_a$ is always a shift-invariant subspace of $V$ and it is $L$-invariant, that is,  $LV_a\subseteq V_a.$
 Also, observe that since for every $f\in V_a$, $Lf=\Lambda_af,$  then
 \begin{equation}\label{intertwining-prop} 
 R(\omega)(\mathcal Tf(\omega)) = \mathcal T(Lf)(\omega) = \cT(\Lambda_a f)(\w) = \hat{a}(\w)\cT f(\w),
 \end{equation}
 for a.e. $\omega\in [0,1)^d.$ Hence, this new  definition of $s$-eigenvalue is related with the eigenvalues of the range operator of $L$, as we state in the next proposition. 
 
 \begin{proposition}\label{prop-eigen}
 Let $V$ be a shift-invariant space with range function $J$ such that $\dim J(\w)<\infty$ for a.e. $\w\in [0,1)^d$, $L:V\rightarrow V$ a bounded shift-preserving operator with associated range operator $R$ and  $a\in \ell^{2}(\mathbb{Z}^d)$ a sequence of bounded spectrum. Then, the following statements hold:
 \begin{enumerate}[\rm(i)]
\item \label{eigenvalues}If $\Lambda_a$ is an $s$-eigenvalue of $L$, then $\lambda_a(\omega) := \hat{a}(\omega)$ is an eigenvalue of $R(\omega)$ for a.e. $\omega\in \sigma(V_a)$.
\item  \label{eigenspaces} The mapping $\omega \mapsto \ker\left(R(\omega) - \lambda_a(\omega)\mathcal {I}_\omega\right)$, $\omega \in [0,1)^d$ is the measurable range function of $V_a$, which we will denote $J_{V_a}$.
 \end{enumerate}
 \end{proposition}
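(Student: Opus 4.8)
The plan is to use as the backbone the fiberwise identity \eqref{intertwining-prop}, which already shows that for every $f\in V_a$ one has $R(\omega)\mathcal Tf(\omega)=\hat a(\omega)\mathcal Tf(\omega)$ for a.e.\ $\omega$, that is $\mathcal Tf(\omega)\in\ker(R(\omega)-\lambda_a(\omega)\mathcal I_\omega)$. Everything then reduces to transferring information between $L$ and its fibers through the isometry $\mathcal T$.

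For (i), since $\Lambda_a$ is an $s$-eigenvalue we have $V_a\neq\{0\}$, and $V_a$ is shift invariant, so Lemma \ref{spec principal} provides a single $\varphi\in V_a$ with $\supp\|\mathcal T\varphi(\cdot)\|_{\ell^2}=\sigma(V_a)$. For a.e.\ $\omega\in\sigma(V_a)$ the fiber $\mathcal T\varphi(\omega)$ is therefore nonzero, and \eqref{intertwining-prop} makes it an eigenvector of $R(\omega)$ with eigenvalue $\hat a(\omega)=\lambda_a(\omega)$. The point where Lemma \ref{spec principal} is essential is that one needs a function whose fiber is non-vanishing on all of $\sigma(V_a)$ at once; an arbitrary $f\in V_a$ would only detect the eigenvalue on the support of its own fiber.

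For (ii), I would first observe that $L-\Lambda_a\colon V\to V$ is shift preserving with range operator $\tilde R(\omega)=R(\omega)-\hat a(\omega)\mathcal I_\omega$, which is measurable as a difference of measurable range operators; hence Proposition \ref{measurable-prop-range}(i) applied to $\tilde R$ shows that $K(\omega):=\ker(R(\omega)-\lambda_a(\omega)\mathcal I_\omega)$ is a measurable range function. It then remains to identify $K$ with $J_{V_a}$. Let $W:=\mathcal T^{-1}(M_K)$ be the shift-invariant space whose range function is $K$. For $f\in V_a$, \eqref{intertwining-prop} gives $\mathcal Tf(\omega)\in K(\omega)$ a.e., so $f\in W$; conversely, if $f\in W$ then $\mathcal Tf(\omega)\in K(\omega)\subseteq J(\omega)$ a.e., whence $f\in V$, and $\tilde R(\omega)\mathcal Tf(\omega)=0$ together with \eqref{prop of R} and the injectivity of $\mathcal T$ forces $Lf=\Lambda_af$, that is $f\in V_a$. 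Thus $W=V_a$, and the one-to-one correspondence between shift-invariant spaces and measurable range functions (\cite[Proposition 1.5]{B}) gives $J_{V_a}=K$. Incidentally, (i) can also be read off from this, since $J_{V_a}(\omega)\neq\{0\}$ precisely on $\sigma(V_a)$.

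The step I expect to be the main obstacle is the measurability assertion in (ii), namely that $\omega\mapsto\ker(R(\omega)-\lambda_a(\omega)\mathcal I_\omega)$ is a measurable range function; this is exactly what Proposition \ref{measurable-prop-range}(i) supplies once $\tilde R$ is set up correctly. The remaining identifications are routine fiberwise transfers through $\mathcal T$, with care that each a.e.\ statement involving a countable family of conditions remains valid a.e.
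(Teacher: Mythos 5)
Your proposal is correct and follows essentially the same route as the paper: part (i) is verbatim the paper's argument (Lemma \ref{spec principal} to get a single function whose fiber support fills $\sigma(V_a)$, then \eqref{intertwining-prop}), and part (ii) obtains measurability by applying Proposition \ref{measurable-prop-range}(i) to the measurable range operator $\omega\mapsto R(\omega)-\hat a(\omega)\mathcal I_\omega$, exactly as the paper does. The only difference is that you spell out the identification $K=J_{V_a}$ (via $W=\mathcal T^{-1}(M_K)$ and the two inclusions), which the paper compresses into "a straightforward consequence of \eqref{range function} and \eqref{intertwining-prop}."
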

 
 \begin{proof}
 (i). By Lemma \ref{spec principal}, there exists $\varphi_a\in V_a$ such that $\supp \|\mathcal T \varphi_a(\cdot)\|_{\ell^2}=\sigma(V_a).$
By \eqref{intertwining-prop},  $\mathcal T \varphi_a (\omega) \in \ker\left(R(\omega) - \lambda_a(\omega)\mathcal {I}_\omega\right)$ for a.e. $\omega\in[0,1)^d$. Now, given that $\mathcal T \varphi_a (\omega) \neq 0$ almost everywhere in $\sigma(V_a)$, we have  that for a.e. $\omega\in \sigma(V_a)$,
 	$$\ker\left(R(\omega) - \lambda_a(\omega)\mathcal {I}_\omega\right)\neq\{0\},$$
 	i.e. $\lambda_a(\omega)$ is an eigenvalue of $R(\omega)$ for a.e. $\omega\in \sigma(V_a)$.
	
(ii). It is clear that the mapping $\w\mapsto R(\w) - \lambda_a(\w)\mathcal {I}_\omega$ is a measurable range operator. Hence, by (i) in Proposition \ref{measurable-prop-range}, the mapping $\w\mapsto \ker\left(R(\omega) - \lambda_a(\omega)\mathcal {I}_\omega\right)$ is a measurable range function.	
	
 	It remains to see that it is, in fact, the range function associated to $V_a$. But this is a straightforward consequence of \eqref{range function} and  \eqref{intertwining-prop}.
 \end{proof}
  
  Note that we have proved that the range function associated to $V_a$ takes as values the eigenspaces of $R(\omega)$ associated to the eigenvalues $\lambda_a(\omega)$ for a.e. $\omega\in\sigma(V_a)$. 
       
 On the other hand, measurable eigenvalues of $R(\omega)$ induce $s$-eigenvalues of $L$, as we state in the following lemma.
 
 \begin{lemma}\label{eigenvalue of R}
 Let $V$ be a shift-invariant space and  $L:V\rightarrow V$ a bounded shift-preserving operator with range operator $R$.  Let $B\subseteq [0,1)^d$ be a measurable set of positive measure. Suppose  $\lambda:B\rightarrow \mathbb C$ is a measurable function such that $\lambda(\omega)$ is an eigenvalue of $R(\omega)$ for a.e. $\omega\in B$, then there exists $a\in \ell^2(\mathbb Z^d)$ of bounded spectrum such that $\lambda(\omega)=\hat{a}(\omega)$ a.e. $\w\in B$ and $\Lambda_a$ is an $s$-eigenvalue of $L$.
 \end{lemma}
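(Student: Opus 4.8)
The plan is to prescribe $a$ through its Fourier transform and then to recognize $\ker(L-\Lambda_a)$ as the shift-invariant space attached to the kernel field of $R-\hat a\,\mathcal I$. First I would check that $\lambda$ is essentially bounded on $B$: if $\lambda(\w)$ is an eigenvalue of $R(\w)$ with eigenvector $v\neq 0$, then $|\lambda(\w)|\,\|v\|=\|R(\w)v\|\le\|R(\w)\|\,\|v\|$, so $|\lambda(\w)|\le\|R(\w)\|\le\|L\|$ for a.e. $\w\in B$ by \eqref{norm of R and L}. Extending $\lambda$ by $0$ off $B$ produces $\psi\in L^{\infty}([0,1)^d)\subset L^2([0,1)^d)$, and I define $a=\{a_k\}_{k\in\Z^d}$ by $a_k=\int_{[0,1)^d}\psi(\w)\,e^{2\pi i\langle\w,k\rangle}\,d\w$, i.e. the Fourier coefficients of $\psi$ with respect to the orthonormal basis $\{e_k\}$. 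Then $a\in\ell^2(\Z^d)$, $\hat a=\psi\in L^{\infty}([0,1)^d)$ so $a$ is of bounded spectrum, and $\hat a(\w)=\lambda(\w)$ for a.e. $\w\in B$, which is the first required conclusion.

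Next I would pass to the range operator $R_a(\w):=R(\w)-\hat a(\w)\mathcal I_\w$. Since $\Lambda_a\colon V\to V$ is shift preserving with range operator $\hat a(\w)\mathcal I_\w$, the operator $R_a$ is exactly the range operator of the bounded shift-preserving operator $L-\Lambda_a\colon V\to V$, and in particular it is measurable. Invoking Proposition \ref{measurable-prop-range}(i), the assignment $N(\w):=\ker R_a(\w)$ is a measurable range function, hence determines a shift-invariant space $W:=\mathcal T^{-1}(M_N)\subseteq V$. The intertwining property then yields, for $f\in V$, the equivalences $f\in W\iff \mathcal T f(\w)\in\ker R_a(\w)\text{ a.e.}\iff R(\w)\mathcal T f(\w)=\hat a(\w)\mathcal T f(\w)\text{ a.e.}\iff \mathcal T(Lf)=\mathcal T(\Lambda_a f)\iff Lf=\Lambda_a f$, so that $W=\ker(L-\Lambda_a)=V_a$.

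Finally, for a.e. $\w\in B$ the scalar $\hat a(\w)=\lambda(\w)$ is an eigenvalue of $R(\w)$, hence $N(\w)=\ker R_a(\w)\neq\{0\}$; thus $B\subseteq\sigma(W)$ up to a null set and $\sigma(W)$ has positive measure. By Lemma \ref{spec principal} there is $\varphi\in W$ with $\supp\|\mathcal T\varphi(\cdot)\|_{\ell^2}=\sigma(W)$, so $\varphi\neq 0$; therefore $V_a=\ker(L-\Lambda_a)\neq\{0\}$ and $\Lambda_a$ is an $s$-eigenvalue of $L$. I expect the measurability of the kernel field $\w\mapsto\ker R_a(\w)$ to be the main obstacle, since it is precisely what converts the pointwise a.e. information ``$N(\w)\neq\{0\}$ on $B$'' into a genuine nonzero element of $V$ via a measurable section; for finite-dimensional fibers this is furnished by Proposition \ref{measurable-prop-range}(i), while in the fully general case one should appeal to the corresponding measurable-selection result from direct-integral theory.
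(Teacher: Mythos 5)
Your proposal is correct and takes essentially the same route as the paper: bound $|\lambda|$ by $\|L\|$ using \eqref{norm of R and L}, extend $\lambda$ boundedly off $B$ to obtain $a$ of bounded spectrum with $\hat a=\lambda$ a.e.\ on $B$, and conclude $V_a\neq\{0\}$ from the non-triviality of $\ker\left(R(\omega)-\lambda(\omega)\mathcal{I}_\omega\right)$ on $B$. The only difference is one of detail: the paper asserts the final deduction ``hence $V_a\neq\{0\}$'' in a single line, whereas you justify it explicitly by identifying $V_a$ with the shift-invariant space of the measurable kernel field via Proposition \ref{measurable-prop-range}(i) and the range-function correspondence, which is precisely the argument the paper leaves implicit (and, as you note, is the step that genuinely needs the finite-dimensionality of the fibers or a measurable-selection result).
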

 
 \begin{proof}
Observe that as $R$ is the range operator of $L$, by (\ref{norm of R and L}) we get that $|\lambda(\omega)|\leq \|L\|$ for a.e. $\omega\in B$. Consider any bounded extension of $\lambda$ to $[0,1)^d$, then, we have that $\lambda\in L^\infty([0,1)^d)$ and thus there exist $a\in\ell^2(\mathbb Z^d)$ of bounded spectrum such that $\hat{a}(\w) = \lambda(\w)$. On the other hand, since $\lambda(\omega)$ is an eigenvalue of $R(\omega)$ for a.e. $\w\in B$, then $$\ker\left(R(\omega) - \lambda(\omega)\mathcal {I}_\omega\right)\neq\{0\}$$
for a.e. $\w\in B$, from which we deduce that $V_a\neq \{0\}$, and therefore $\Lambda_{a}$ is an $s$-eigenvalue of $L$.
 \end{proof}
 
A natural question that arises here is whether these kind of measurable functions exist. 
When $\dim J(\w)$ is constant, we can consider  $R(\w)$ as a matrix of measurable entries (see  Proposition \ref{matrix R}). We will prove the existence of measurable eigenvalues for that case in Subsection \ref{measurable-eigenvalues}. Then, for the more general case on which $\dim J(\w)<\infty$ for a.e. $\w\in [0,1)^d$ we will construct measurable functions and measurable sets on which such functions are eigenvalues of $R(\w)$ (see Theorem \ref{thm:autovalores-full}). 

 In the next proposition we give conditions on two $s$-eigenvalues in order that its associated $s$-eigenspaces are in direct sum. 
 
 \begin{proposition}\label{different eigenvalues}
  Let $V$ be a shift-invariant space and  $L:V\rightarrow V$ a bounded shift-preserving operator.
 	Let $a,b \in \ell^2(\Z^d)$ be two distinct sequences of bounded spectrum such that $\Lambda_a$ and $\Lambda_b$ are $s$-eigenvalues of $L$. The following statements are equivalent:
 	\begin{enumerate}[\rm (i)]
	 	\item $V_a\cap V_b =\{0\}$,
	 	\item $\hat{a}(\omega)\neq\hat{b}(\omega)$ almost everywhere in $\sigma(V_a)\cap \sigma(V_b)$.
 	\end{enumerate}
 \end{proposition}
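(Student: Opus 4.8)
The plan is to transfer the whole question to the fiber level, where the two $s$-eigenspaces become ordinary eigenspaces of the range operator $R(\omega)$, and then invoke the elementary fact that eigenspaces for distinct eigenvalues meet only at $0$. Three structural tools are needed: the intersection formula $J_{V_a\cap V_b}(\omega)=J_{V_a}(\omega)\cap J_{V_b}(\omega)$ from item (iii) of Lemma~\ref{range-properties}; the identification $J_{V_a}(\omega)=\ker\big(R(\omega)-\hat a(\omega)\mathcal{I}_\omega\big)$, and likewise for $b$, from Proposition~\ref{prop-eigen}(ii); and the one-to-one correspondence of item (ii) of Lemma~\ref{range-properties}, which says that a shift-invariant space is trivial exactly when its range function vanishes a.e. Combining the first two gives, for a.e.\ $\omega$,
$$J_{V_a\cap V_b}(\omega)=\ker\big(R(\omega)-\hat a(\omega)\mathcal{I}_\omega\big)\cap\ker\big(R(\omega)-\hat b(\omega)\mathcal{I}_\omega\big),$$
so the proposition reduces to deciding, for a.e.\ $\omega$, when this intersection is $\{0\}$.

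First I would prove (ii)$\Rightarrow$(i). It suffices to show $J_{V_a\cap V_b}(\omega)=\{0\}$ for a.e.\ $\omega$, whence $V_a\cap V_b=\{0\}$ by item (ii) of Lemma~\ref{range-properties}. I split into cases according to the spectra. If $\omega\notin\sigma(V_a)$ then $J_{V_a}(\omega)=\{0\}$, and symmetrically for $b$, so the intersection is trivial off $\sigma(V_a)\cap\sigma(V_b)$. For a.e.\ $\omega\in\sigma(V_a)\cap\sigma(V_b)$ the hypothesis gives $\hat a(\omega)\neq\hat b(\omega)$; if some $v$ lay in both kernels then $R(\omega)v=\hat a(\omega)v=\hat b(\omega)v$, so $(\hat a(\omega)-\hat b(\omega))v=0$ forces $v=0$. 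Thus the fiber intersection vanishes a.e., as required.

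Next I would prove (i)$\Rightarrow$(ii) by contraposition. Suppose (ii) fails, i.e.\ the measurable set $E:=\{\omega\in\sigma(V_a)\cap\sigma(V_b):\hat a(\omega)=\hat b(\omega)\}$ has positive measure. On $E$ the two kernels coincide, since $\hat a(\omega)=\hat b(\omega)$, so $J_{V_a}(\omega)\cap J_{V_b}(\omega)=J_{V_a}(\omega)$; moreover $\omega\in\sigma(V_a)$ means $J_{V_a}(\omega)\neq\{0\}$. Hence $J_{V_a\cap V_b}(\omega)\neq\{0\}$ on the positive-measure set $E$, and applying item (ii) of Lemma~\ref{range-properties} to the shift-invariant space $V_a\cap V_b$ and $\{0\}$ yields $V_a\cap V_b\neq\{0\}$, so (i) fails.

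The argument is short once this fiberwise dictionary is in place, so the point to watch is to use the full strength of Proposition~\ref{prop-eigen}(ii). The inclusion $J_{V_a}(\omega)\subseteq\ker\big(R(\omega)-\hat a(\omega)\mathcal{I}_\omega\big)$ (immediate from the intertwining relation \eqref{intertwining-prop}) already suffices for (ii)$\Rightarrow$(i); it is the reverse inclusion, namely that $J_{V_a}(\omega)$ is the \emph{entire} eigenspace, that makes the coincidence $J_{V_a}(\omega)=J_{V_b}(\omega)$ on $E$ work in (i)$\Rightarrow$(ii). The only remaining care is measure-theoretic bookkeeping of the ``almost everywhere'' clauses and the measurability of $E$, which follows from measurability of $\hat a$, $\hat b$ and of the spectra $\sigma(V_a)$, $\sigma(V_b)$.
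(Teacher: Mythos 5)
Your proof is correct and takes essentially the same route as the paper's: both directions are settled at the fiber level using the identification $J_{V_a}(\omega)=\ker\big(R(\omega)-\hat a(\omega)\mathcal{I}_\omega\big)$ from Proposition~\ref{prop-eigen}(ii) together with items (ii) and (iii) of Lemma~\ref{range-properties}, and your contraposition for (i)$\Rightarrow$(ii) is exactly the paper's contradiction argument rephrased, including the key observation that the \emph{full} eigenspace identification is what is needed there. The only cosmetic difference is in (ii)$\Rightarrow$(i), where the paper argues directly with an element $f\in V_a\cap V_b$, noting that $(\hat a-\hat b)(\omega)\,\mathcal{T}f(\omega)=0$ a.e.\ forces $f=0$, rather than passing through the vanishing of the range function and the uniqueness correspondence of Lemma~\ref{range-properties}(ii); the underlying linear algebra is identical.
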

 \begin{proof}
 We assume that $\sigma(V_a)\cap \sigma(V_b)\neq \emptyset$ since, otherwise, the equivalence holds.
 
 $(\rm i)\Rightarrow (\rm ii)$. 
  By (iii) in Lemma \ref{range-properties}, we have that $J_{V_a}(\omega)\cap J_{V_b}(\omega) = J_{V_a\cap V_b}(\omega) = \{0\}$, i.e. 
 	$$\ker\left(R(\omega)-\lambda_a(\omega)\mathcal {I}_\omega\right)\cap\ker\left(R(\omega)-\lambda_b(\omega)\mathcal {I}_\omega\right)=\{0\},$$
 	 for a.e. $\omega \in [0,1)^d$.
 	 
 	 Suppose there exists a measurable set $A\subset \sigma(V_a)\cap\sigma(V_b)$ such that $|A|>0$ and $\hat{a}(\omega) = \hat{b}(\omega)$ when $\omega\in A$. Then, if $\omega\in A$, we have that
 	 $$\ker\left(R(\omega)-\lambda_a(\omega)\mathcal {I}_\omega\right)=\ker\left(R(\omega)-\lambda_b(\omega)\mathcal {I}_\omega\right)=\{0\},$$
 	 which is a contradiction.
 	 
 	  $(\rm ii)\Rightarrow (\rm i)$. Let $f\in V_a\cap V_b$, then $Lf=\Lambda_af=\Lambda_bf$ and thus, 
	   	 $$(\hat{a}-\hat{b})(\omega)\cT f(\omega)= 0,$$
 for a.e. $\w\in[0,1)^d$.
 	By (ii), we get that $\cT f(\w)=0$ for a.e. $\omega \in \sigma(V_a)\cap \sigma(V_b)$, given that $\sigma(V_a\cap V_b)\subseteq \sigma(V_a)\cap \sigma(V_b)$, we have that $f=0$.
 \end{proof}
 
 \section{Measurability}\label{section-measurability}
 
This section is dedicated to discussing  measurability questions that arose in the previous sections. It is divided into two subsections. In the first one, we prove Proposition \ref{measurable-prop-range} which shows the measurability of the kernel and the inverse of a measurable range operator. The second subsection is devoted to studying the existence of measurable eigenvalues for a range operator.

\subsection{Proof of Proposition\ \ref{measurable-prop-range}}\label{section-measurability-1}  

For the proof of (i) in Proposition \ref{measurable-prop-range}, we need the subsequent two lemmas which were proved in \cite{ACHM}. For the sake of completeness we will include their proofs.

\begin{lemma}\label{set of rank k}
	Let $M=M(\omega)$ be an $n\times n$ matrix of measurable entries  defined on a measurable set $B\subset \R^d$. If for $k=0,\dots, n$ $$B_k :=\{\omega\in B \,:\, {\text{\rm rank}}(M(\omega)) = k\},$$  then $B_k$ is measurable and $B=\bigcup_{k=0}^n B_k$.
\end{lemma}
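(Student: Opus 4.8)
The plan is to reduce the statement to the standard minor characterization of rank together with the fact that a polynomial in measurable functions is measurable. Recall that for a matrix $M$ one has $\operatorname{rank}(M)\geq k$ if and only if $M$ possesses at least one nonvanishing $k\times k$ minor. I would exploit this equivalence fiberwise in $\omega$.

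First I would fix, for each pair of index sets $S,T\subseteq\{1,\dots,n\}$ with $|S|=|T|=j$, the function $d_{S,T}(\omega)$ given by the determinant of the $j\times j$ submatrix of $M(\omega)$ obtained by selecting the rows in $S$ and the columns in $T$. Since the determinant is a polynomial in the matrix entries, and the entries of $M(\omega)$ are measurable by hypothesis, each $d_{S,T}$ is a finite sum of finite products of measurable functions, hence measurable. Consequently, each set $\{\omega\in B : d_{S,T}(\omega)\neq 0\}$ is measurable.

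Next I would define, for $0\le k\le n$, the set $U_k:=\{\omega\in B : \operatorname{rank}(M(\omega))\geq k\}$. By the minor characterization, $U_k=\bigcup_{|S|=|T|=k}\{\omega\in B : d_{S,T}(\omega)\neq 0\}$ for $k\geq 1$, a finite union of measurable sets, hence measurable; and $U_0=B$. Since a nonvanishing $(k+1)\times(k+1)$ minor forces the existence of a nonvanishing $k\times k$ minor, we have $U_{k+1}\subseteq U_k$. Writing $U_{n+1}:=\emptyset$, the set where the rank equals exactly $k$ is then $B_k=U_k\setminus U_{k+1}$, which is measurable as a difference of measurable sets.

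Finally, the decomposition $B=\bigcup_{k=0}^n B_k$ is immediate, since the rank of an $n\times n$ matrix is an integer between $0$ and $n$, so every $\omega\in B$ lies in exactly one $B_k$. I do not expect any genuine obstacle here; the only point requiring care is the verification that each minor $d_{S,T}$ is measurable, which rests on the elementary observation that sums and products of measurable scalar functions are measurable.
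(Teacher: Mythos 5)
Your proposal is correct and follows essentially the same route as the paper: the paper also uses the minor characterization of rank, forming the measurable function $f_k(\omega)=\sum |\det(A(\omega))|$ over all $k\times k$ submatrices (whose positivity set is exactly your $U_k$) and then writing $B_k$ as the difference of the sets where $f_k>0$ and $f_{k+1}>0$. Your union-of-nonvanishing-minor-sets formulation is an equivalent repackaging of the same argument.
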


\begin{proof}
	For $k=0,\dots,n$ consider $C_k(\omega) := \{A(\omega) \in \mathbb C^{k\times k}: A(\omega)\,\text{ submatrix of } M(\omega)\}$, and let $f_k:B \rightarrow \R$ be defined as
	$$ f_k(\omega) = \sum_{A(\omega) \in C_k(\omega)} |\det(A(\omega))|.$$
	Then, $f_k$ is measurable for each $k$, and $f_k(\omega) > 0$ if and only if $\text{rank }(M(\omega)) \geq k$.  Since 
	$B_k = \{\omega\in B\,:\, f_k(\omega) > 0\} \setminus \{\omega\in B\,:\, f_{k+1}(\omega) > 0\}$, it follows that $B_k$ is  measurable.
\end{proof}

\begin{lemma}\label{measurable ker}
	Let $B\subset\mathbb R^d$ be a measurable set and $M = M(\omega)$ be an $n\times n$ matrix of measurable entries defined on $B$. Suppose that $\rank \left(M(\w)\right)$ = k for a.e. $\w\in B$.  Then there exist measurable functions $v_1,\dots , v_{n-k} : B \rightarrow \mathbb C^n$ such that the vectors $\{v_1(\omega), \dots , v_{n-k}(\omega)\}$ form an orthonormal basis for $\ker\left(M(\omega)\right)$, for a.e. $\omega\in B$.
	In particular, if $P_{\ker\left(M(\omega)\right)}$ denotes the orthogonal projection of $\mathbb C^n$ onto $\ker\left(M(\omega)\right)$, we have that 
	$$B\ni\w\mapsto\langle P_{\ker\left(M(\omega)\right)}x,y\rangle$$
	is measurable for every $x,y\in\mathbb C^n$.
\end{lemma}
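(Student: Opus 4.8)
The plan is to produce explicit measurable kernel vectors by localizing to regions where a fixed $k\times k$ minor is invertible, and then to orthonormalize. First I would fix once and for all a linear ordering of the pairs $(R,C)$ with $R,C\subseteq\{1,\dots,n\}$, $|R|=|C|=k$, and for each such pair let $M_{R,C}(\omega)$ be the $k\times k$ submatrix of $M(\omega)$ with rows indexed by $R$ and columns by $C$. Since $\det M_{R,C}(\cdot)$ is a polynomial in the entries of $M$, it is measurable on $B$, so each set $B_{R,C}:=\{\omega\in B:\det M_{R,C}(\omega)\neq 0\}$ is measurable (this is exactly the kind of reasoning behind Lemma \ref{set of rank k}). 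Because $\rank M(\omega)=k$ for a.e.\ $\omega\in B$, at a.e.\ $\omega$ some $k\times k$ minor is nonzero, hence $B=\bigcup_{(R,C)}B_{R,C}$ up to a null set. Assigning each $\omega$ to the first pair $(R,C)$ in the fixed ordering with $\omega\in B_{R,C}$ yields a finite partition of $B$, modulo a null set, into measurable sets $E_{R,C}$.

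Second, on each piece $E_{R,C}$ I would write down the kernel explicitly. There the rows of $M$ indexed by $R$ are linearly independent and therefore span the row space, so $\ker M(\omega)=\ker M_R(\omega)$, where $M_R$ is the $k\times n$ submatrix consisting of the rows in $R$. Splitting the coordinates of $x\in\mathbb C^n$ into the pivot block $x_C\in\mathbb C^k$ and the free block $x_{C^c}\in\mathbb C^{n-k}$, the equation $M_R(\omega)x=0$ reads $M_{R,C}(\omega)x_C=-M_{R,C^c}(\omega)x_{C^c}$, and since $M_{R,C}(\omega)$ is invertible it is solved by $x_C=-M_{R,C}(\omega)^{-1}M_{R,C^c}(\omega)x_{C^c}$. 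Letting $x_{C^c}$ run over the standard basis of $\mathbb C^{n-k}$ produces $n-k$ vectors $w_1(\omega),\dots,w_{n-k}(\omega)$ forming a basis of $\ker M(\omega)$; by Cramer's rule the entries of $M_{R,C}(\omega)^{-1}$ are quotients of minors of $M(\omega)$ with nonvanishing denominator, so each $w_j$ is measurable on $E_{R,C}$.

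Third, I would orthonormalize. Applying Gram--Schmidt to $w_1(\omega),\dots,w_{n-k}(\omega)$ on $E_{R,C}$ uses only inner products, scalar multiplications, and divisions by strictly positive norms (the $w_j(\omega)$ being linearly independent), all of which preserve measurability; the resulting orthonormal basis $v_1(\omega),\dots,v_{n-k}(\omega)$ of $\ker M(\omega)$ is thus measurable on $E_{R,C}$. Gluing over the finitely many pieces $E_{R,C}$ gives measurable functions $v_1,\dots,v_{n-k}\colon B\to\mathbb C^n$ whose values form an orthonormal basis of $\ker M(\omega)$ for a.e.\ $\omega\in B$. Finally, from $P_{\ker M(\omega)}x=\sum_{j=1}^{n-k}\langle x,v_j(\omega)\rangle\,v_j(\omega)$ one gets that $\omega\mapsto\langle P_{\ker M(\omega)}x,y\rangle=\sum_{j=1}^{n-k}\langle x,v_j(\omega)\rangle\,\overline{\langle y,v_j(\omega)\rangle}$ is a finite sum of products of measurable functions, hence measurable.

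The delicate point, and the one I would watch most carefully, is the measurable localization in the first step: the identity of the invertible minor genuinely depends on $\omega$, so there is no single global formula for the kernel, and one must check that the ``first invertible minor'' rule partitions $B$ into measurable pieces whose union is, up to measure zero, all of $B$. Once that partition is in place, the remaining work — Cramer's rule for the explicit basis and Gram--Schmidt for orthonormalization — is a routine, measurability-preserving computation carried out piece by piece.
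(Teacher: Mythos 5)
Your proposal is correct and follows essentially the same route as the paper's proof: both partition $B$ (up to a null set) into measurable pieces according to which $k\times k$ minor is invertible, solve for an explicit measurable kernel basis on each piece by inverting that minor (the paper reduces by permutation to the top-left block, you keep general index pairs $(R,C)$, which is the same computation), then apply Gram--Schmidt and glue. Your explicit ``first invertible minor'' rule is simply a concrete realization of the paper's disjointification of the sets $B_h$ into the $E_h$, and your closing formula for $\langle P_{\ker M(\omega)}x,y\rangle$ spells out the ``in particular'' claim that the paper leaves implicit.
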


\begin{proof}
	Define $$S_k = \{C\times D : \, C\subset \{1,\dots,n\}, \, D \subset \{1,\dots,n\}, \#C=\#D=k\}. $$ 
	For $h \in S_k$, let $M_h$ be the submatrix of $M$ whose entries have subindexes in $h$. We have that $B = \bigcup_{h \in S_k} B_h$, where $B_h = \{\omega \in B: \det(M_h(\omega)) \not= 0\}.$ The continuity of the determinant function implies that $B_h$ are measurable for all $h \in S_k$. From the sets $B_h$ we can construct measurable disjoint  sets $E_h, \, h \in S_k$ such that $B = \bigcup_{h \in S_k} E_h$ and $\det(M_h(\omega)) \not=0 \;  \text{ for a.e. }\omega \in E_h$.
	In this way, for each $\omega$ we selected a unique invertible $k \times k$ submatrix $M_h(\omega)$. Note that for a fixed $h \in S_k$ the indexes of the submatrices $M_h(\omega)$ are the same for
	all $\omega$.
	
	For each fixed $h \in S_k$ we want to choose measurable functions 
	$v_1, \dots, v_{n-k}$ defined on $E_h$,  such that the vectors $\{v_1(\omega),\dots, v_{n-k}(\omega)\}$ form a basis of $\ker(M(\omega))$ for a.e. $\omega \in E_h$.
	
	Without loss of generality we may assume that $h = \{1,\dots,k\}\times\{1,\dots,k\}$ (the general case can be reduced to this one by permutation of rows and columns).
	For $\omega \in E_h$ let us define the vector functions:
	\begin{align*}
	v_1^h & = (x_{11}^h,\dots,x_{1k}^h,1,0,\dots,0)\\
	v_2^h & = (x_{21}^h,\dots,x_{2k}^h,0,1,0,\dots,0)\\
	\vdots \\
	v_{n-k}^h & = (x_{(n-k)1}^h,\dots,x_{(n-k)k}^h,0,\dots,0,1)
	\end{align*}
	with $( x_{i1}^h(\omega),\dots,x_{ik}^h(\omega)) = -M^{-1}_h(\omega) c_{i+k}(\omega)$ for $i=1,\dots,n-k$, where
	$c_{i+k}$ is the column vector containing the first $k$ entries of the ${(i+k)}$-column of $M(\omega)$.
	It is straightforward to see that the vectors $\{v_1^h(\omega),\dots, v_{n-k}^h(\omega)\}$ are measurable functions and form a basis of $\ker(M(\omega))$ for a.e. $\omega \in E_h$.

	Finally, if we set $v_i = \sum_{h\in S_k} v_i^h\chi_{E_h}, \, i = 1,\dots, m-k$,  we obtain a measurable basis for $\ker(M(\omega)),\text{ for a.e. }\, \omega \in B$. Since the Gram-Schmidt orthogonalization process  conserves measurability, we obtain the desired result.  
\end{proof}

\begin{proof}[Proof of Proposition \ref{measurable-prop-range}] 
	
	Let $\{\varphi_i\}_{i\in\N} \subset L^2(\R^d)$ and the sets $\{A_n\}_{n\in\N_0}$ be the ones given by Lemma \ref{An}.
	
	(i).  We will prove that $\w\mapsto\ker(R(\omega))$ is a measurable range function on $[0,1)^d$ by showing that it is measurable on each $A_n$ for every $n\in\N_0$.
	
	For $A_0$ there is nothing to prove. Fix $n\geq 1$. We need to prove 
	that the scalar function $A_n\ni\w\mapsto\langle P_{\ker(R(\omega))}u,v\rangle$ is measurable for every $u,v\in\ell^2(\Z^d)$. 
	By Lemma \ref{An}, we have that $\{\mathcal T\varphi_1(\omega),\dots, \mathcal T\varphi_n(\omega)\}$ is an orthonormal basis of $J(\w)$ for a.e. $\w\in A_n$. Then, it is enough to see that $\omega\mapsto \langle P_{\ker (R(\omega))} \mathcal T \varphi_i(\omega),\mathcal T \varphi_{j}(\omega)\rangle$ is measurable for every $i,j=1\dots,n$. 
	
	Let $\{e_{1},\dots,e_{n}\}$ be the canonical basis of $\mathbb C^n$ and, for $\w\in A_n$, define $\nu(\omega):J(\omega)\rightarrow \mathbb C^n$  as 
	\begin{equation}\label{eq:cambio-de-base}
	\nu(\omega)\left(\mathcal T \varphi_i(\omega)\right) = e_i,
	\end{equation}
	for $i=1,\dots,n$. This is the change-of-basis operator from $\{\mathcal T\varphi_1(\omega),\dots,\mathcal T\varphi_n(\omega)\}$ to $\{e_1,\dots, e_n\}$, which is unitary for a.e $\w\in A_n$. 
	Then $$P_{\ker(R(\omega))} = \nu(\omega)^{-1}P_{\ker\left([R\,](\omega)\right)}\nu(\omega),$$ where $[R\,](\w)$ is the matrix of $R(\w)$ relative to the basis $\{\mathcal T\varphi_1(\omega),\allowbreak\dots,\mathcal T\varphi_n(\omega)\}$. 
	Therefore, given $i$ and $j$, we have that
	\begin{align*}
	\left\langle P_{\ker (R(\omega))} \mathcal T \varphi_i(\omega),\mathcal T \varphi_{j}(\omega)\right\rangle &= \left\langle \nu(\omega)^{-1}P_{\ker\left([R\,](\omega)\right)}\nu(\omega) \mathcal T \varphi_i(\omega),\mathcal T \varphi_{j}(\omega)\right\rangle\\
	&=\left\langle P_{\ker\left([R\,](\omega)\right)}e_j,e_{j}\right\rangle,
	\end{align*}
	for a.e. $\w\in A_n$. 
	
	Consider now the partition of $A_n$ into the sets $\{B_1,\cdots,B_n\}$ given by Lemma \ref{set of rank k}. Then, on each $B_k$, $[R\,](\omega)$ is a matrix of rank $k$ and, by Lemma \ref{measurable ker}, $B_k\ni\w\mapsto\langle P_{\ker\left([R\,](\omega)\right)}e_i,e_{j}\rangle$ is a measurable scalar function for every $1\leq k \leq n$. Then, so is $A_n\ni\w\mapsto\langle P_{\ker\left([R\,](\omega)\right)}e_i,e_{j}\rangle$ and this is what  we wanted to prove.

 (ii). As before, in order to prove that $\w\mapsto (R(\w))^{-1}$ is a measurable range operator on $[0,1)^d$, it suffices to prove the measurability on each $A_n$ for every $n\in\N_0$.
 
 It is clearly true for $A_0$. Now, for a fixed $n\geq 1$ we need to prove that $A_n\ni\w\mapsto \langle (R(\w))^{-1}P_{J(\w)} u, v\rangle$ is measurable for every $u,v\in\ell^2(\Z^d)$. 
  Again, it is enough to see that $\omega\mapsto \left\langle (R(\w))^{-1}\mathcal T \varphi_i(\omega),\mathcal T \varphi_{j}(\omega)\right\rangle$ is measurable for every $i,j=1\dots,n$. 
  
  Observe that $\nu(\w)(R(\w))^{-1}\nu(\w)^{-1} = ([R\,](\w))^{-1}$, where $\nu(\omega)$ is the change-of-basis defined in \eqref{eq:cambio-de-base}. Hence, given $i$ and $j$, we see that
 \begin{align*}
 \left\langle R(\w)^{-1}\mathcal T \varphi_i(\omega),\mathcal T \varphi_{j}(\omega)\right\rangle&=\left\langle \nu(\w)^{-1}([R\,](\w))^{-1}\nu(\w)\mathcal T \varphi_i(\omega),\mathcal T \varphi_{j}(\omega)\right\rangle\\
 &=\left\langle ([R\,](\w))^{-1} e_i, e_{j}\right\rangle,
 \end{align*}
 for a.e. $\w\in A_n$.
 
 Now, for $\w\in A_n$, recall that the coefficients of $([R\,](\w))^{-1}$, namely $r_{i,j}(\w)$, are given by the formula $r_{i,j} (\w)= (-1)^{i+j} \det(([R\,](\w))_{ij})/\det([R\,](\w))$, where $[R\,](\w)_{ij}$ denotes the minor matrix obtained from $[R\,](\w)$ removing the $i$-th row and $j$-th column. Since these operations preserve measurability and $[R\,](\w)$ has measurable functions as entries, we conclude that $([R\,](\w))^{-1}$ is a matrix with measurable entries as well. Finally, $A_n\ni\w \mapsto \langle ([R\,](\w))^{-1} e_i, e_{j}\rangle$ is a measurable scalar function and so is $A_n \ni \w\mapsto (R(\w))^{-1}$, as we wanted to prove. 
  \end{proof}

\subsection{Measurable eigenvalues}\label{measurable-eigenvalues}

In this subsection we will prove the existence of measurable eigenvalues of $R(\w)$.
We begin with the case where $\dim J(\w)$ is constant and finite for a.e. $\w\in[0,1)^d$. 

First, we show that for an $n\times n$ matrix $M=M(\omega)$ of measurable functions there exists a choice of measurable eigenvalues.

 \begin{proposition}\label{prop:eigenvalues}
 If $M=M(\omega)$ is an $n\times n$ matrix of measurable functions defined on a measurable space $\Omega$, then there exist $n$ measurable functions $\lambda_j:\Omega\rightarrow \mathbb C$, $j=1,\dots,n$, such that $\lambda_1(\omega),\dots,\lambda_n(\omega)$ are the eigenvalues of $M(\omega)$ for a.e. $\w\in \Omega$, counted with multiplicity.
 \end{proposition}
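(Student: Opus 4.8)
The plan is to reduce the eigenvalue problem to a statement about measurable selection of the roots of a monic polynomial. First I would form the characteristic polynomial $p_\omega(z)=\det(zI-M(\omega))$. Its coefficients are universal polynomial expressions in the entries of $M(\omega)$, hence measurable functions of $\omega$, say $c_0(\omega),\dots,c_{n-1}(\omega)$. Since the eigenvalues of $M(\omega)$ counted with multiplicity are precisely the roots of $p_\omega$ counted with multiplicity, it suffices to prove the following: given measurable $c_0,\dots,c_{n-1}\colon\Omega\to\mathbb{C}$, the $n$ roots of the associated monic polynomial can be listed (with repetition according to multiplicity) as $n$ measurable functions $\lambda_1,\dots,\lambda_n$.

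I would prove this by induction on $n$. The base case $n=1$ is immediate, since the unique root is $\lambda_1=-c_0$. For the inductive step, the crucial point is to produce a \emph{single} measurable root $\mu\colon\Omega\to\mathbb{C}$ of $p_\omega$. Granting this, polynomial (synthetic) division gives $p_\omega(z)=(z-\mu(\omega))\,q_\omega(z)$, where $q_\omega$ is monic of degree $n-1$ and its coefficients are obtained from $c_0(\omega),\dots,c_{n-1}(\omega)$ and $\mu(\omega)$ through the Horner recursion; these are polynomial expressions in measurable functions, hence measurable. Applying the inductive hypothesis to $q_\omega$ yields measurable $\lambda_2,\dots,\lambda_n$, and setting $\lambda_1=\mu$ completes the multiset of roots. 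Note that any measurable choice of a single root suffices here, so I do not need a canonical one.

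The heart of the argument---and the step I expect to be the main obstacle---is therefore the measurable selection of one root. I would treat $\omega\mapsto\{z\in\mathbb{C}:p_\omega(z)=0\}$ as a set-valued map into the nonempty compact (indeed finite) subsets of $\mathbb{C}$. Since the roots of a monic polynomial depend continuously on its coefficients in the Hausdorff metric, and $\omega\mapsto(c_0(\omega),\dots,c_{n-1}(\omega))$ is measurable, this multifunction is measurable: for every open $U\subseteq\mathbb{C}$, the set $\{K:K\cap U\neq\emptyset\}$ is open in the Hausdorff topology, so $\{\omega:p_\omega\text{ has a root in }U\}$ is the preimage of an open set under the measurable coefficient map, hence measurable. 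As $\mathbb{C}$ is a Polish space and the values are nonempty and closed, the Kuratowski--Ryll-Nardzewski selection theorem provides a measurable selection $\mu(\omega)$, which is the single root required. (Alternatively, one may avoid invoking an abstract selection theorem by taking $\mu(\omega)$ to be the lexicographically largest root under the order on $\mathbb{C}\cong\mathbb{R}^2$: the maximal real part of the roots is a continuous function of the coefficients, and a short supplementary argument shows the resulting choice is Borel. The multifunction viewpoint is cleaner, so I would present that one.)
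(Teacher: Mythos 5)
Your proof is correct, but it follows a genuinely different route from the paper's. The paper disposes of this proposition in two lines by citing a result of Azoff (\cite[Corollary 4]{A} in the bibliography): there exists a Borel measurable map $\mathcal{J}\colon M_n\to M_n$ assigning to each matrix its Jordan canonical form; composing $\mathcal{J}$ with the measurable matrix function $\omega\mapsto M(\omega)$ and reading off the diagonal entries of $\mathcal{J}(M(\omega))$ immediately yields measurable eigenvalues counted with multiplicity. You instead pass to the characteristic polynomial and prove a measurable root-listing statement by induction: a single measurable root is extracted via the Kuratowski--Ryll-Nardzewski selection theorem applied to the root-set multifunction (whose weak measurability you correctly deduce from continuity of roots in the Hausdorff metric), and Horner division reduces the degree. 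All steps check out, including the preservation of multiplicities through the successive divisions and the validity of both arguments over an arbitrary measurable space $\Omega$. What each approach buys: the paper's is far shorter but leans on a nontrivial black box about Borel parametrization of the full Jordan structure, much more than is needed here; yours is essentially self-contained (entirely so, if one fleshes out your lexicographic-selection alternative, which avoids even KRN) and is strictly more general in that it applies to any measurable family of monic polynomials, not just characteristic polynomials of measurable matrix fields. Either proof serves the paper's later needs (Theorem \ref{cor:eigenvalues-of-R} and Theorem \ref{thm:autovalores-full}), since those only require the eigenvalues themselves, not the Jordan form.
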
 
 \begin{proof}
 Let $M_n$ be the algebra of $n\times n$ complex matrices. By  \cite[Corollary 4]{A}, there exists a Borel measurable mapping $\mathcal{J} :M_n\to M_n$ such that for all  $A\in M_n,\mathcal{J}(A)$ is the Jordan canonical form of $A$. Since $M:\Omega\to M_n$ is measurable, then $\mathcal{J}\circ M:\Omega \to M_n$ is measurable. Now, as the diagonal entries of the  Jordan canonical form of a matrix are its eigenvalues, the result follows.
 \end{proof} 
As a consequence we have:

 \begin{theorem}\label{cor:eigenvalues-of-R}
Let $R(\omega):J(\omega)\rightarrow J(\omega)$ be a measurable range operator on a range function $J$ satisfying $\dim J(\w)=n$ a.e. $\w\in B$ where $B\subseteq [0,1)^d$ is measurable. Then, there exist $n$ measurable functions $\lambda_j:B\rightarrow \mathbb C$, $j=1,\dots,n$, such that $\lambda_1(\omega),\dots,\lambda_n(\omega)$ are the eigenvalues of $R(\omega)$ a.e. $\w\in B$, counted with multiplicity.
 \end{theorem}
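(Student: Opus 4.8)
The plan is to reduce the statement to Proposition~\ref{prop:eigenvalues} by passing from the abstract fiber operator $R(\omega)$ to a genuine $n\times n$ matrix of measurable functions. Since $\dim J(\omega)=n$ for a.e.\ $\omega\in B$, Proposition~\ref{matrix R} supplies $n^2$ measurable bounded functions $\{m_{i,j}\}_{i,j=1}^n$ on $B$ so that the matrix $[R\,](\omega)=(m_{i,j}(\omega))_{i,j=1}^n$ represents $R(\omega)$ in the orthonormal basis $\{\mathcal T\varphi_1(\omega),\dots,\mathcal T\varphi_n(\omega)\}$ of $J(\omega)$ furnished by Lemma~\ref{An}; recall that by the remark following that lemma one has $B\subseteq A_n$, so this basis and the associated change-of-basis map are indeed defined a.e.\ on $B$. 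Consequently $\omega\mapsto[R\,](\omega)$ is an $n\times n$ matrix of measurable functions defined on the measurable set $B$.

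Next I would apply Proposition~\ref{prop:eigenvalues} with $\Omega=B$ and $M(\omega)=[R\,](\omega)$. This immediately produces $n$ measurable functions $\lambda_1,\dots,\lambda_n:B\rightarrow\mathbb C$ whose values are, for a.e.\ $\omega\in B$, the eigenvalues of the matrix $[R\,](\omega)$ counted with multiplicity. The only remaining point is to identify these with the eigenvalues of the operator $R(\omega)$ acting on $J(\omega)$. Letting $\nu(\omega)$ denote the unitary change-of-basis operator of \eqref{eq:cambio-de-base}, one has $[R\,](\omega)=\nu(\omega)\,R(\omega)\,\nu(\omega)^{-1}$ for a.e.\ $\omega\in B$, and conjugation by an invertible operator preserves the characteristic polynomial and hence the eigenvalues together with their multiplicities. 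Therefore $\lambda_1(\omega),\dots,\lambda_n(\omega)$ are the eigenvalues of $R(\omega)$ a.e.\ on $B$, counted with multiplicity, which is the assertion.

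I expect no genuine obstacle here, as the substantive work has already been isolated in two earlier results: Proposition~\ref{matrix R}, which manufactures a measurable matrix representation out of the fiberwise-finite-dimensional range operator, and Proposition~\ref{prop:eigenvalues}, which performs the measurable selection of eigenvalues via the Borel measurable Jordan-form map. The single step demanding care is the basis-independence of the spectrum, namely that replacing $R(\omega)$ by its matrix in the measurable orthonormal frame does not alter the eigenvalues; this is precisely the content of the conjugation identity above and requires nothing beyond elementary linear algebra applied fiberwise.
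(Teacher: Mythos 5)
Your proposal is correct and follows essentially the same route as the paper: reduce to the matrix representation $[R\,](\omega)$ of Proposition \ref{matrix R}, apply the measurable eigenvalue selection of Proposition \ref{prop:eigenvalues}, and transfer the conclusion back to $R(\omega)$ via the unitary change of basis $\nu(\omega)$ of \eqref{eq:cambio-de-base}. The only cosmetic difference is that you justify the identification of spectra (with multiplicities) through invariance of the characteristic polynomial under conjugation, while the paper phrases it through equality of kernels $\ker\left(R(\w)-\lambda(\w)\mathcal{I}_\omega\right)=\ker\left(\nu(\w)^{-1}([R\,](\w)-\lambda(\w)\mathcal{I}_\omega)\nu(\w)\right)$; both are elementary and equivalent.
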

 \begin{proof}
When $\dim J(\w) = n$ almost everywhere in a measurable set $B\subseteq [0,1)^d$, the range operator $R(\w)$ can be seen as an $n\times n$ matrix of measurable functions over $B$, which we denote as $[R\,](\w)$ (see Proposition \ref{matrix R}). Then, if $\nu(\w):J(\w)\to\mathbb C^n$ is defined as in \eqref{eq:cambio-de-base} we have that 
 $$R(\w)=\nu(\w)^{-1}[R\,](\w)\nu(\w),\quad\textrm{ for a.e. }\w\in B.$$
 Thus, if $\lambda:B\to\mathbb C$ is a measurable function, it holds that $\ker\left(R(\w)-\lambda(\w)\mathcal {I}_\omega\right)=\ker\left(\nu(\w)^{-1}([R\,](\w)-\lambda(\w)\mathcal {I}_\omega)\nu(\w)\right)$ for a.e. $\w\in B.$
 As a consequence, measurable eigenvalues of $R(\w)$ and $[R\,](\w)$ agree, and Proposition \ref{prop:eigenvalues} gives what we wanted. 
 \end{proof}
 In particular, the theorem holds for $B=A_n$, where $A_n$ is defined in Lemma \ref{An}.
 
 For the more general case when  $\dim J(\w)<\infty$ a.e. $\w\in [0,1)^d$
 we can construct $L^\infty$-functions such that when restricted to some measurable sets, they are eigenvalues of $R(\w)$. 
 
 Given a bounded measurable range operator $R$ we will denote 
 \begin{equation}\label{eq:spectrum of R}
 \Sigma(\w):=\{\lambda\in\mathbb C\,:\,\lambda\text{ is an eigenvalue of }R(\w)\}.
 \end{equation}

	\begin{theorem}\label{thm:autovalores-full}
 	Let $R(\w):J(\w)\rightarrow J(\w)$ be a bounded measurable range operator on a range function $J$ satisfying $\dim J(\w)<\infty$ for a.e. $\w\in [0,1)^d$. Then, there exist  functions $\lambda_j\in L^\infty([0,1)^d) $, $j\in\N$, such that
 	\begin{enumerate}[\rm (i)]
 		\item  $\lambda_j(\w)\neq\lambda_{j'}(\w)$ for $j\neq j'$ and for a.e. $\w\in[0,1)^d$,
 		\item if $\{A_n\}_{n\in\N_0}$ are the sets of Lemma \ref{An} and $A_{n,i}:= \left\{\w\in A_n\,:\, \#\Sigma(\w) = i\right\}$, then $\Sigma(\w)=\{\lambda_1(\w),\dots,\lambda_i(\omega)\}$ for a.e. $\w\in A_{n,i}$ and for every $i\leq n$, $i,n \in \N$.
 	\end{enumerate} 
 \end{theorem}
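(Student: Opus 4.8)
The plan is to reduce to the constant-dimension situation already handled in Theorem~\ref{cor:eigenvalues-of-R} and then carry out a measurable selection of the \emph{distinct} eigenvalues. First I would invoke Lemma~\ref{An} to partition $[0,1)^d$ (up to a null set) into the measurable sets $\{A_n\}_{n\in\N_0}$ on which $\dim J(\w)=n$. On $A_0$ the fiber space is trivial and $\Sigma(\w)=\emptyset$, so nothing needs to be done there. Fixing $n\ge 1$, Theorem~\ref{cor:eigenvalues-of-R} applied with $B=A_n$ furnishes measurable functions $\mu_1,\dots,\mu_n\colon A_n\to\CC$ that list, with multiplicity, the eigenvalues of $R(\w)$ for a.e. $\w\in A_n$; in particular $\Sigma(\w)=\{\mu_1(\w),\dots,\mu_n(\w)\}$ as a set. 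Since each eigenvalue satisfies $|\mu_k(\w)|\le\|R(\w)\|\le M$, where $M:=\esssup_{\w}\|R(\w)\|<\infty$, every function produced below will automatically be bounded.

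The heart of the argument is to extract, in a measurable way, exactly the distinct values among $\mu_1(\w),\dots,\mu_n(\w)$ and to reorganize them into the slots $\lambda_1,\lambda_2,\dots$. To control measurability I would split $A_n$ according to the \emph{equality pattern} of the $\mu_k$: for each partition $\pi$ of $\{1,\dots,n\}$ set
\[
E_\pi:=\{\w\in A_n:\ \mu_k(\w)=\mu_\ell(\w)\iff k,\ell\text{ lie in the same block of }\pi\}.
\]
Each $E_\pi$ is measurable, being built from the zero sets of the measurable functions $\mu_k-\mu_\ell$, and the $E_\pi$ partition $A_n$ up to a null set. If $\pi$ has exactly $i$ blocks then $\#\Sigma(\w)=i$ for $\w\in E_\pi$, so $A_{n,i}$ is the union of those $E_\pi$ whose $\pi$ has $i$ blocks, and is therefore measurable. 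On a fixed $E_\pi$ with blocks ordered by their least elements $r_1<\dots<r_i$, the functions $\mu_{r_1},\dots,\mu_{r_i}$ are measurable, pairwise distinct (they come from different blocks), and enumerate $\Sigma(\w)$; gluing over all such $\pi$ defines measurable functions $\lambda_1,\dots,\lambda_i$ on $A_{n,i}$ with $\Sigma(\w)=\{\lambda_1(\w),\dots,\lambda_i(\w)\}$, which is exactly (ii).

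It remains to assemble global functions $\lambda_j$, $j\in\N$, on all of $[0,1)^d$ and to arrange (i). For a fixed $j$, on every region $A_{n,i}$ with $i\ge j$ the function $\lambda_j$ is already defined as above; on the remaining set, namely $A_0$ together with all $A_{n,i}$ with $i<j$, I would set $\lambda_j\equiv c_j$, where $c_1,c_2,\dots$ are fixed pairwise distinct constants with $|c_j|>M$. Then each $\lambda_j$ is measurable and bounded, hence in $L^\infty([0,1)^d)$, and on any $A_{n,i}$ the values $\lambda_1(\w),\dots,\lambda_i(\w)$ are the distinct eigenvalues (all of modulus $\le M$) while $\lambda_j(\w)=c_j$ for $j>i$ has modulus $>M$; distinctness among the eigenvalues, among the constants, and between the two groups is immediate, giving (i). The a.e. decomposition $[0,1)^d=A_0\cup\bigcup_{n\ge1}\bigcup_{i\le n}A_{n,i}$ then yields the claim.

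The main obstacle is precisely the measurable selection of distinct eigenvalues together with the bookkeeping needed to place them in consistent slots, since $\#\Sigma(\w)$ varies with $\w$ and a naive ``first new value'' rule shifts indices discontinuously. The equality-pattern decomposition circumvents this by freezing the pattern on each piece $E_\pi$, reducing the selection to choosing block representatives; alternatively one could fix a total order on $\CC$ (lexicographic on real and imaginary parts), sort the $\mu_k$ measurably, and define $\lambda_{j+1}$ recursively as the least sorted value exceeding $\lambda_j$, which is again a measurable operation. Everything else—boundedness via $M$, measurability of the $E_\pi$, and the verification of (i) and (ii)—is routine.
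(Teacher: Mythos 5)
Your proposal is correct and follows essentially the same route as the paper's proof: reduce to constant fiber dimension via Lemma~\ref{An} and Theorem~\ref{cor:eigenvalues-of-R}, stratify each $A_n$ by the number of distinct eigenvalues to get the sets $A_{n,i}$, select distinct representatives measurably, and paste globally using pairwise distinct constants of modulus exceeding the uniform bound (the paper uses $K+j$ where your $c_j$ appear). Your equality-pattern sets $E_\pi$ are simply a more explicit formulation of the step the paper describes as ``partitioning $A_{n,i}$ into all possibilities on which $i$ eigenvalues are different and pasting them properly.''
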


\begin{proof}
	Let $\{A_n\}_{n\in\N_0}$ be the disjoint measurable sets given by Lemma \ref{An}. We have that $\sigma(V)=\bigcup_{n\in\N}A_n$.
	For every $n\in\N$, by Theorem \ref{cor:eigenvalues-of-R}, there exist $n$ measurable functions defined on $A_n$, namely 
	$$\lambda^n_1,\dots,\lambda^n_n:A_n \to \mathbb C,$$
	such that at a.e. $\w\in A_n$ these are the eigenvalues of $R(\w)$, counted with multiplicity.
	
	Now, for a fixed $n\in\N$ and for every $1 \leq i\leq n$  the sets $A_{n,i}$ are  
	\begin{equation*}\label{eq:sigmaji}
		A_{n,i}= \left\{\w\in A_n\,:\, \#\{\lambda^n_1(\w),\dots,\lambda^n_n(\w)\} = i\right\}.
	\end{equation*}
	Hence, these are also disjoint measurable sets (possibly of measure zero) such that $A_n=\bigcup_{i=1}^n A_{n,i}$, for every $n\in\N$. 
	On each of these sets $A_{n,i}$ we have $i$ measurable functions
	$$\lambda_1^{n,i},\dots,\lambda_i^{n,i}:A_{n,i}\to\mathbb C,$$
	such that $\lambda_j^{n,i}(\w)$ is an eigenvalue of $R(\w)$ and $\lambda_j^{n,i}(\w) \neq \lambda_{j'}^{n,i}(\w)$ when $j\neq j'$ for a.e. $\w\in A_{n,i}$. The measurability of  $\lambda_j^{n,i}$
	deserves a moment of thought: partitioning 
	$A_{n,i}$  into all possibilities on which $i$ eigenvalues from $\{\lambda^n_1(\w),\dots,\lambda^n_n(\w)\}$ are different and pasting them properly we get measurability. 
	
	Moreover, assume that $R$ is uniformly bounded from above by a constant $K>0$, then we have that $|\lambda_{j}^{n,i}(\w)|\leq K$ for a.e. $\w\in A_{n,i}$, and for every $j\leq i \leq n$ and $n\in\N$. We now proceed to paste these functions in the following way:	
	
	For every $j\in\N$, let $\lambda_j:[0,1)^d\to \mathbb C$ be defined as
	$$\lambda_j(\w) := \begin{cases}
	\lambda_j^{n,i}(\w), &\text{ when }\w\in A_{n,i}, \text{ for } \, n\geq i \geq j,\\
	K + j &\text{ otherwise.}
	\end{cases}$$
	These are measurable functions satisfying that $\lambda_j(\w) \neq \lambda_{j'}(\w)$ when $j\neq j'$ for a.e. $\w\in [0,1)^d$
	and  $\lambda_j\in L^\infty([0,1)^d)$ for $j\in\N$. 
	
	Finally, note that for $j\in\N$,
	$\ker(R(\w)-\lambda_{j}(\w)\mathcal {I}_\omega)=\ker(R(\w)-\lambda_{j}^{n,i}(\w)\mathcal {I}_\omega)$ for a.e.  $\w\in A_{n,i}$ and for every $n\geq i \geq j$. Therefore, $\lambda_{j}(\w)$ is an eigenvalue of $R(\w)$ for a.e. $\w\in A_{n,i}$ and for every $n\geq i \geq j$. Otherwise, $\ker(R(\w)-\lambda_{j}(\w)\mathcal {I}_\omega)=\{0\}$ since $\lambda_j(\w)=K+j$ is not an eigenvalue of $R(\w)$.
\end{proof}

\begin{remark}\label{inclusion of spec}
\

(i) Assume that $J$ is the range function of a shift-invariant space $V$ which is  finitely generated. Then, for only finitely many values of $n$, the sets $A_n$ have positive measure. In particular, $|A_n|=0$ for all $ n> \mathcal L(V)$. In that case, if we discard the functions $\lambda_j$ such that, for a.e. $\w\in[0,1)^d$, $\lambda_j(\w)$ is not an eigenvalue of $R(\w)$, the above procedure will generate at most $\mathcal L(V)$ functions. 

 More precisely,  for $i\in\N$, we define \begin{equation}\label{Bi}B_i:=\bigcup_{n=i}^{\infty} A_{n,i},\quad \text{ and }\quad  \mathfrak{g}:=\max\{i\in\N\,:\,|B_i|>0\}.\end{equation} Observe that, in fact,  
$B_i$ is the set of all the $\w\in\sigma(V)$ for which $R(\w)$ has exactly $i$ different eigenvalues. Hence, it is easy to see that
\begin{equation}\label{r esssup}
	\mathfrak{g}=\underset{\omega\in \sigma(V)}{\text{\rm ess sup }} \, \#\Sigma(\w),
\end{equation}	
where $\Sigma(\w)$ is as in \eqref{eq:spectrum of R}.
Then, we have that $\mathfrak{g}$ is finite, with $\mathfrak{g}\leq \mathcal L (V)$ and the procedure will generate exactly $\mathfrak{g}$ functions.

(ii) For $j\in \N$, define the sets 
\begin{equation*}\label{Cs} C_j:=\bigcup_{i=j}^{\infty} B_i,\end{equation*} 
where $B_i$ are the ones defined in \eqref{Bi}. Notice that $C_{j+1}\subseteq C_{j}\subseteq \sigma(V)$ for every $j\in \N$. Each $C_j$ is the set of all the $\w\in\sigma(V)$ for which $R(\w)$ has at least $j$ different eigenvalues. Furthermore, from the construction done in Theorem \ref{thm:autovalores-full}, we also observe that $$C_j=\left\{\w\in[0,1)^d\,:\, \ker(R(\w)-\lambda_{j}(\w)\mathcal {I}_\omega)\neq\{0\}\right\},$$
for every $j\in\N$ and $|C_j|=0$ for every $j>\mathfrak{g}$.

(iii) Later on, we will assume that  $J$ satisfies  $\dim J(\w)\leq\ell$ for a.e. $\w\in [0,1)^d$, for some $0\leq\ell<\infty$. In that case, the functions constructed in Theorem \ref{thm:autovalores-full} will be used to define $s$-eigenvalues of the shift-preserving operator associated to $R$. 

(iv) The functions $\{\lambda_j\}_{j\in\N}$ of Theorem \ref{thm:autovalores-full} are, clearly,  not unique.

\end{remark}

 \section{$s$-diagonalization}\label{section-s-todo}

In this section, we introduce a new property for a bounded shift-preserving operator $L$ acting on a finitely generated shift-invariant space $V$, which we call {\it $s$-diagonalization}. In order to establish conditions on $L$ for being $s$-diagonalizable, we exploit the finite dimensional structure of its range function and its range operator: for almost each $\omega\in [0,1)^d,$ we have that $R(\w)$ is a linear transformation acting on a vector space of finite dimension. Thus, for instance, if $L$ is a normal operator, $R(\w)$ is normal and then diagonalizable  for a.e. $\w\in[0,1)^d$.  

 \subsection{Definition and properties of s-diagonalizations}

 \begin{definition}\label{s-diag}
 	Let $V$ be a finitely generated shift-invariant space and  $L:V\rightarrow V$ a bounded shift-preserving operator. We say that $L$ is {\it $s$-diagonalizable} if there exists a finite number of sequences of bounded spectrum $a_1,\dots,a_m$  such that $\Lambda_{a_1}, \dots, \Lambda_{a_m}$ are $s$-eigenvalues of $L$  and
 	\begin{equation}\label{direct sum V}
		V = V_{a_1}\oplus \dots \oplus V_{a_m},
	\end{equation}
 where $\Lambda_{a_j}$ and $V_{a_j}$ are the ones defined in Definition \ref{def: s-eigenvalues} for $j=1,\dots, m$. In that case, we say that $(a_1, \dots, a_m)$ is an {\it $s$-diagonalization} of $L$.

 \end{definition}
 
 \begin{remark}\label{s-eigenvalues not unique}
 \ 
 
 (i) Notice that if $(a_1, \dots, a_m)$ is an {\it $s$-diagonalization} of $L$, then each $f\in V$ can be written in a unique way as $f= f_1+\dots +f_m$ with $f_j\in V_{a_j}$.
 On the other hand, we have $Lf_j= \Lambda_{a_j} f_j,$  for $\; j=1,...,m$.
 If we denote by $Q_{V_{a_j} }:V \rightarrow V$ the oblique projection onto $V_{a_j}$ respect to this decomposition, i.e. $Q_{V_{a_j}}f =f_j$ we can write:
 $$L=\sum_{j=1}^m \Lambda_{a_j} Q_{V_{a_j}}.$$
 
(ii) The decomposition in \eqref{direct sum V} is not unique. Furthermore, the number $m$ of 
 $s$-eigenspaces of $L$ for which $V$ admits a decomposition like in \eqref{direct sum V} could be arbitrarily large. Indeed, take for example the first $s$-eigenvalue $\Lambda_{a_1}$, then, by Proposition \ref{prop-eigen}, $\lambda_{a_1}(\w)$ is an eigenvalue for $R(\w)$ for a.e. $\w\in\sigma(V_{a_1})$. Now, split $\sigma(V_{a_1})=A\cup B$ where $A\cap B=\emptyset$ and both sets are of positive measure. Then, define $\lambda_A(\w):=\lambda_{a_1}(\w)\chi_{A}(\w)$ and $\lambda_B(\w):=\lambda_{a_1}(\w)\chi_{B}(\w)$. These are measurable functions which are eigenvalues of $R(\w)$ over a positive measure set. By Lemma \ref{eigenvalue of R}, it is possible to construct sequences $a_A$ and $a_B$ of bounded spectrum such that $\lambda_A(\w)=\hat{a}_A(\w)$ for a.e. $\w\in A$, $\lambda_B(\w)=\hat{a}_B(\w)$ for a.e. $\w\in B$, $\Lambda_{a_A},\Lambda_{a_B}$ are $s$-eigenvalues of $L$ and $V_{a_1}=V_{a_A}\oplus V_{a_B}$. Finally, we obtain the decomposition $V=V_{a_A}\oplus V_{a_B}\oplus V_{a_2}\oplus \dots \oplus V_{a_m}$.
 \end{remark}

The next proposition shows that when $L$ is invertible and $s$-diagonalizable, $L^{-1}$ must be $s$-diagonalizable as well.

\begin{proposition}\label{prop:inverso}
	 Let $V$ be a finitely generated shift-invariant space and  $L:V\rightarrow V$ a bounded shift-preserving operator which is invertible. If $L$ is $s$-diagonalizable then $L^{-1}$ is $s$-diagonalizable. Furthermore, if $(a_1, \dots, a_m)$ is an $s$-diagonalization of $L$, then $(b_1,\dots,b_m)$ is an $s$-diagonalization of $L^{-1}$ with
	 $\hat{b}_j(\w)=(\hat{a}_j(\w))^{-1}$  for a.e. $\w\in[0,1)^d$ and $V_{b_j}=\ker(L^{-1} - \Lambda_{b_j})=V_{a_j}$ for $j=1,\dots,m$.
	 \end{proposition}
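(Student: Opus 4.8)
The plan is to show that the inverse operator $L^{-1}$ inherits $s$-diagonalizability directly from the decomposition of $L$, by verifying that the same direct sum decomposition of $V$ works for $L^{-1}$ with the reciprocal $s$-eigenvalues. First I would establish that each proposed $b_j$ is a legitimate sequence of bounded spectrum. Since $L$ is invertible, by Theorem~\ref{L^-1 sp} its range operator $R(\w)$ is uniformly bounded from below by some constant $C>0$. On $\sigma(V_{a_j})$ the function $\hat{a}_j(\w)=\lambda_{a_j}(\w)$ is an eigenvalue of $R(\w)$ (Proposition~\ref{prop-eigen}), so the lower bound forces $|\hat{a}_j(\w)|\geq C$ there, whence $(\hat{a}_j(\w))^{-1}$ is bounded. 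After choosing a bounded measurable extension off $\sigma(V_{a_j})$ that keeps $\hat{b}_j$ away from zero, $\hat{b}_j\in L^\infty([0,1)^d)$ and therefore $b_j\in\ell^2(\Z^d)$ is of bounded spectrum, so $\Lambda_{b_j}$ is a well-defined bounded shift-preserving operator.

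The central step is the identity $V_{b_j}=\ker(L^{-1}-\Lambda_{b_j})=V_{a_j}$. To see this I would argue fiberwise. For $f\in V_{a_j}$ we have $Lf=\Lambda_{a_j}f$, i.e.\ $R(\w)\cT f(\w)=\hat{a}_j(\w)\cT f(\w)$ for a.e.\ $\w$. Applying $(R(\w))^{-1}=R^{-1}(\w)$ (again Theorem~\ref{L^-1 sp}(ii)) gives $R^{-1}(\w)\cT f(\w)=(\hat{a}_j(\w))^{-1}\cT f(\w)=\hat{b}_j(\w)\cT f(\w)$ on $\sigma(V_{a_j})$; since $\cT f(\w)=0$ off $\sigma(V_{a_j})$, the equality $R^{-1}(\w)\cT f(\w)=\hat{b}_j(\w)\cT f(\w)$ holds for a.e.\ $\w\in[0,1)^d$. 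Because $R^{-1}$ is the range operator of $L^{-1}$, this says $L^{-1}f=\Lambda_{b_j}f$, so $V_{a_j}\subseteq V_{b_j}$. The reverse inclusion follows by the symmetric argument: $L^{-1}$ is itself an invertible shift-preserving operator whose inverse is $L$, and $\hat{a}_j=(\hat{b}_j)^{-1}$ on $\sigma(V_{a_j})=\sigma(V_{b_j})$, so starting from $g\in V_{b_j}$ and applying $R(\w)=(R^{-1}(\w))^{-1}$ returns $Lg=\Lambda_{a_j}g$, giving $V_{b_j}\subseteq V_{a_j}$.

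Once $V_{b_j}=V_{a_j}$ is established for each $j$, the decomposition $V=V_{a_1}\oplus\dots\oplus V_{a_m}$ immediately reads as $V=V_{b_1}\oplus\dots\oplus V_{b_m}$, and since each $V_{b_j}=V_{a_j}\neq\{0\}$ the operators $\Lambda_{b_j}$ are genuine $s$-eigenvalues of $L^{-1}$. This exhibits $(b_1,\dots,b_m)$ as an $s$-diagonalization of $L^{-1}$, completing the proof.

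I expect the main obstacle to be purely bookkeeping rather than conceptual: one must be careful about the behavior of $\hat{b}_j$ off $\sigma(V_{a_j})$, where $\hat{a}_j$ need not be nonzero and the naive reciprocal is undefined. The resolution is that the value of $\hat{b}_j$ outside $\sigma(V_{a_j})$ is irrelevant to the action of $\Lambda_{b_j}$ on $V_{a_j}$ (all relevant fibers vanish there), so any bounded measurable choice works; the statement $\hat{b}_j(\w)=(\hat{a}_j(\w))^{-1}$ should be read as holding where it makes sense, i.e.\ a.e.\ on $\sigma(V_{a_j})$. Making this precise while keeping the two inclusions symmetric is the only delicate point.
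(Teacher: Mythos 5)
Your strategy is the same as the paper's (reciprocal symbols $\hat b_j=(\hat a_j)^{-1}$, verification on the fibers), and your first two steps are sound: the lower bound $|\hat a_j(\w)|\geq C$ a.e.\ on $\sigma(V_{a_j})$ (the paper obtains it instead by applying Bownik's Theorem 4.6 to the invertible restriction $L\big|_{V_{a_j}}=\Lambda_{a_j}$ on $V_{a_j}$, a cosmetic difference) and the inclusion $V_{a_j}\subseteq V_{b_j}$. The genuine gap is in the reverse inclusion. You justify it by asserting $\sigma(V_{a_j})=\sigma(V_{b_j})$, but at that stage only $\sigma(V_{a_j})\subseteq\sigma(V_{b_j})$ is known, so the argument is circular; worse, your closing claim that \emph{any} bounded measurable extension of $\hat b_j$ off $\sigma(V_{a_j})$ works is false. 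By Proposition \ref{prop-eigen}(ii) applied to $L^{-1}$, the range function of $V_{b_j}$ is $\w\mapsto\ker\left(R^{-1}(\w)-\hat b_j(\w)\mathcal{I}_\w\right)$, and for $\w\notin\sigma(V_{a_j})$ this kernel is nonzero precisely when $(\hat b_j(\w))^{-1}$ is an eigenvalue of $R(\w)$; a careless extension can therefore make $V_{b_j}$ strictly larger than $V_{a_j}$ and destroy the directness of the sum. Concretely, let $\Phi=\{\varphi_1,\varphi_2\}$ with $E(\Phi)$ an orthonormal basis of $V=S(\Phi)$, and let $R(\w)$ be diagonal in the fiber basis $\{\cT\varphi_1(\w),\cT\varphi_2(\w)\}$ with entries $1$ and $r_2(\w)$, where $r_2=1$ on a measurable set $W$ with $0<|W|<1$ and $r_2=2$ off $W$. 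Then $\hat a_1\equiv 1$, $\hat a_2\equiv 2$ give an $s$-diagonalization of $L$ with $\sigma(V_{a_2})=[0,1)^d\setminus W$. If you extend $\hat b_2$ by the value $1$ on $W$, then $\ker\left(R^{-1}(\w)-\hat b_2(\w)\mathcal{I}_\w\right)=J(\w)$ on $W$, whence $V_{b_1}\cap V_{b_2}\neq\{0\}$: the pair $(b_1,b_2)$ is not an $s$-diagonalization of $L^{-1}$, and $V_{b_2}\neq V_{a_2}$.

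The repair is short. Either choose the extension to avoid the fiber spectra, e.g.\ $\hat b_j(\w):=(\|L\|+1)^{-1}$ off $\sigma(V_{a_j})$ (eigenvalues of $R(\w)$ have modulus at most $\|L\|$ by \eqref{norm of R and L}, so the kernel above vanishes there), or do as the paper does and take $\hat b_j=(\hat a_j)^{-1}$ a.e.\ on all of $[0,1)^d$: then for a.e.\ $\w$ one has $\ker\left(R^{-1}(\w)-\hat b_j(\w)\mathcal{I}_\w\right)=\ker\left(R(\w)-\hat a_j(\w)\mathcal{I}_\w\right)$, so $V_{b_j}$ and $V_{a_j}$ have equal range functions and coincide by Lemma \ref{range-properties}(ii), which yields both inclusions at once. (To be fair, the paper's prescription tacitly presumes $|\hat a_j|$ is bounded below a.e.\ on all of $[0,1)^d$, while Bownik's theorem only gives this on $\sigma(V_{a_j})$; so your instinct that the behavior of $\hat b_j$ off the spectrum is the delicate point is exactly right --- the error is only in concluding that the choice made there is irrelevant.)
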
 
	 \begin{proof}
	 Fix $j\in\{1,\dots, m\}$. Since $L$ is invertible and $L\big |_{V_{a_j}}=\Lambda_{a_j}:V_{a_j}\to V_{a_j}$, we see that $\Lambda_{a_j}$ is invertible. Let $R_{a_j}$ be the range operator corresponding to $\Lambda_{a_j}$, this is $R_{a_j}(\w) = \hat{a}_j(\w)\mathcal {I}_\omega$ for a.e. $\w\in[0,1)^d$. By Theorem 4.6 in \cite{B}, 
we get that $R_{a_j}(\w)$ is uniformly bounded from below by a constant $A>0$ for a.e. $\w\in[0,1)^d$, which implies that $|\hat{a}_j(\w)|\geq A$ for a.e. $\w\in[0,1)^d$. Also, since $a_j$ is of bounded spectrum, there exists a constant $B>0$ such that $A\leq |\hat{a}_j(\w)|\leq B$ for a.e. $\w\in[0,1)^d$.

Define $b_j\in\ell^2(\Z^d)$ by its Fourier transform as $\hat{b}_j(\w):=(\hat{a}_j(\w))^{-1}$ for a.e. $\w\in [0,1)^d$. Then, we have that $B^{-1} \leq |\hat{b}_j(\w)| \leq A^{-1}$ for a.e. $\w\in[0,1)^d$ and so $b_j$ is of bounded spectrum. Let us see that $\Lambda_{b_j}$ is an $s$-eigenvalue of $L^{-1}$. Let $f\in V_{a_j}$, then $\cT(Lf) (\w)= \hat{a}_j(\w)\cT f(\w)$ for a.e. $\w\in [0,1)^d$. Now, the function $g:=Lf$ is in $V_{a_j}$ and we have that $\cT g (\w) = \hat{a}_j(\w) \cT (L^{-1} g)(\w)$ for a.e. $\w\in [0,1)^d$, which is the same that $\hat{b}_j(\w) \cT g(\w) = \cT (L^{-1}g)(\w)$ for a.e. $\w\in[0,1)^d$. This proves that for every $g\in V_{a_j}$, $L^{-1}g= \Lambda_{b_j} g$ and thus $V_{a_j}\subseteq \ker (L^{-1} - \Lambda_{b_j} )$. A similar argument shows that in fact $V_{a_j} = \ker (L^{-1} - \Lambda_{b_j} )$.
	 \end{proof}
 
One of our main goals is to explore the relation between the notion of $s$-diagonali\-zation for a shift-preserving operator $L$ and the diagonalization of the fibers of its range operator. In this direction, we have the following result.
 
  \begin{theorem}\label{s-diagonalizable then diagonalizable}
	 Let $V$ be a finitely generated shift-invariant space and  $L:V\rightarrow V$ a bounded shift-preserving operator with range operator $R$. If $L$ is $s$-diagonalizable, then $R(\omega)$ is diagonalizable for a.e. $\omega\in\sigma(V)$.
	 
	  \end{theorem}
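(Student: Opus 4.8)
The plan is to transfer the decomposition $V=V_{a_1}\oplus\dots\oplus V_{a_m}$ down to the fibers and then read off the diagonalizability of $R(\w)$ from the resulting decomposition of $J(\w)$. The key input is Proposition \ref{prop-eigen}: by item (ii), the range function of the $s$-eigenspace $V_{a_j}$ is $J_{V_{a_j}}(\w)=\ker\left(R(\w)-\lambda_{a_j}(\w)\mathcal{I}_\omega\right)$ with $\lambda_{a_j}(\w)=\hat{a}_j(\w)$, and by item (i) this kernel is a genuine nonzero eigenspace of $R(\w)$ for a.e. $\w\in\sigma(V_{a_j})$.

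First I would prove that $J(\w)=\sum_{j=1}^m J_{V_{a_j}}(\w)$ for a.e. $\w\in[0,1)^d$. Each $V_{a_j}$ is shift invariant and hence admits a countable generating set $\Phi_j$; since $V=\sum_{j=1}^m V_{a_j}$, the countable union $\Phi=\bigcup_{j=1}^m\Phi_j$ generates $V$, so $J(\w)=\overline{\text{span}}\{\mathcal{T}\varphi(\w):\varphi\in\Phi\}$ for a.e. $\w$. For $\varphi\in\Phi_j$ the fiber $\mathcal{T}\varphi(\w)$ belongs to $J_{V_{a_j}}(\w)$, so this span is contained in $\sum_{j=1}^m J_{V_{a_j}}(\w)$; as the latter is a subspace of the finite-dimensional space $J(\w)$ it is automatically closed, which yields $J(\w)\subseteq\sum_{j=1}^m J_{V_{a_j}}(\w)$. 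The reverse inclusion is immediate from $V_{a_j}\subseteq V$, so equality holds a.e.

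Finally, fix $\w\in\sigma(V)$ outside the relevant null set. Then $J(\w)\neq\{0\}$ and, combining the two observations, $J(\w)=\sum_{j=1}^m\ker\left(R(\w)-\lambda_{a_j}(\w)\mathcal{I}_\omega\right)$. Each nonzero summand is an eigenspace of $R(\w)$, so the finite-dimensional space $J(\w)$ is spanned by eigenvectors of $R(\w)$; selecting a basis among them shows that $R(\w)$ is diagonalizable, which is precisely the claim.

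The delicate point is the first step, namely converting the (possibly non-orthogonal) direct sum of shift-invariant spaces into a pointwise sum of fiber spaces: the naive identity $\mathcal{T}f(\w)=\sum_j\mathcal{T}f_j(\w)$ holds only off an $f$-dependent null set, so one cannot argue vector by vector with a fixed exceptional set. Passing to a single countable generating family $\Phi$ removes this difficulty, reducing everything to the a.e.-valid description of $J(\w)$ through the fibers of $\Phi$. I also note that the directness of the sum is irrelevant for the conclusion: once the eigenspaces add up to all of $J(\w)$, a basis of eigenvectors exists regardless of whether the scalars $\lambda_{a_j}(\w)$ happen to be distinct.
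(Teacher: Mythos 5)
Your proof is correct and follows essentially the same route as the paper: identify $J_{V_{a_j}}(\w)$ with $\ker\left(R(\w)-\hat{a}_j(\w)\mathcal{I}_\w\right)$ via Proposition \ref{prop-eigen}, show these subspaces span $J(\w)$ by reducing to a countable generating family (your union $\bigcup_j\Phi_j$ plays the role of the paper's $\Phi$), and use finite-dimensionality of $J(\w)$ to close the sum. The only difference is cosmetic: the paper additionally verifies that the fiberwise sum is direct (via Lemma \ref{range-properties}), which, as you correctly observe, is not needed to conclude diagonalizability.
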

 
 \begin{proof}
 Suppose that $(a_1, \dots, a_m)$ is an $s$-diagonalization of $L$. Then, by Proposition \ref{prop-eigen}, for every $j=1,\dots,m$, $\lambda_{a_j}(\omega)=\hat{a}_j(\w)$ is an eigenvalue of $R(\w)$ for a.e. $\w\in \sigma(V_{a_j})$ and $J_{V_{a_j}}(\omega)= \ker(R(\w)-\lambda_{a_j}(\w)\mathcal {I}_\omega)$ is its associated eigenspace. We will see that $J(\w) = J_{V_{a_1}}(\w)\oplus \dots \oplus J_{V_{a_m}}(\w)$. 

First, notice that since $V_{a_j} \subseteq V$, we have that $J_{V_{a_j}}(\w) \subseteq J(\w)$ for every $j=1,\dots,m$, and hence $J_{V_{a_1}}(\w)+\dots+ J_{V_{a_m}}(\w) \subseteq J(\w)$. For the other inclusion, let $f\in V$, then $f = f_1 +\dots + f_m$, where $f_j \in V_{a_j}$ for $j=1,\dots, m$. Thus, $\mathcal T f(\omega) = \mathcal T f_1(\omega) + \dots + \mathcal T f_m(\omega)$ for a.e. $\omega \in [0,1)^d$, which implies that 
	$$J(\omega) = \overline{\rm span} \left\{\mathcal T \varphi(\omega)\,:\, \varphi\in \Phi \right\} \subseteq \overline{J_{V_{a_1}}(\omega)+\dots+J_{V_{a_m}}(\omega)},$$
	where $\Phi\subset L^2(\R^d)$ is such that $V=S(\Phi)$.
Since $J(\w)$ has finite dimension for a.e. $\w\in[0,1)^d$, the sum is closed.
	By (iii) of Lemma \ref{range-properties}, the sum is direct.
 \end{proof}

 \begin{remark}\label{rem:of Theo L diagonalizable}
	\
	
(i) Note that in the above theorem we have proved that if $J$ is the range function associated  to $V$ and  if $(a_1, \dots, a_m)$ is an $s$-diagonalization of $L$, then $J(\w) = J_{V_{a_1}}(\w)\oplus \dots \oplus J_{V_{a_m}}(\w)$, where   $J_{V_{a_j}}(\w)$ is the eigenspace associated to the eigenvalue $\lambda_{a_j}(\w)=\hat{a}_j(\w)$ of $R(\w)$ for a.e. $\w\in\sigma(V_{a_j})$ for $j=1,\dots,m$. From this, we can deduce that for a.e. $\w\in\sigma(V)$, the number of eigenvalues of $R(\w)$ is at most $m$.

(ii) Suppose $\mathcal{L}(V) = \ell$. In Remark \ref{s-eigenvalues not unique} we showed that the number of $s$-eigenspaces     for an $s$-diagonalization of  $L$ can be arbitrarily large. By Proposition \ref{different eigenvalues} and by the fact that $\dim J(\w)\leq \ell$ for a.e. $\w\in[0,1)^d$ we obtain that, for a.e. $\w\in\sigma(V)$, at most $\ell$ of the eigenspaces $J_{V_{a_j}}(\w)$ will not be the zero subspace.

\end{remark}

Let $V$ be a finitely generated shift-invariant space, $L:V\rightarrow V$ a bounded shift-preserving operator and $R$ its associated range operator. We will denote for $\w\in\sigma(V)$,
\begin{equation}\label{eq:K} 
k(\w):=\#\Sigma(\w),
\end{equation}
where $\Sigma(\w)$ is as in \eqref{eq:spectrum of R}.

Observe that, as seen in (i) of Remark \ref{rem:of Theo L diagonalizable}, given any $s$-diagonalization $(a_1,\dots,\allowbreak a_m)$ of $L$, we have that $k(\w)\leq m$ for a.e. $\w\in \sigma(V)$.  Moreover, we saw that if $\mathfrak{g}$ is as in \eqref{Bi}, then $\mathfrak{g} = \underset{\omega\in \sigma(V)}{\text{\rm ess sup }} k(\w)$ and therefore, $\mathfrak{g}\leq m$.

The next lemma shows an interesting relation between the spectrum of the $s$-eigenspaces of an $s$-diagonalization and the function $k(\w)$.
\begin{lemma}\label{prop:h}
 Let $V$ be a finitely generated shift-invariant space and $L:V\rightarrow V$ a bounded shift-preserving operator which is $s$-diagonalizable. Let $(a_1, \dots, a_m)$ be an $s$-diagonalization of $L$. Define for $\w\in\sigma(V)$,
 \begin{equation}\label{eq:function-h}
 h(\w):=\sum_{j=1}^{m} \chi_{\sigma(V_{a_j})}(\w),
 \end{equation}
 and let $k(\w)$ be defined as in \eqref{eq:K} for $\w\in\sigma(V)$. Then $h(\w)=k(\w)$ for a.e. $\w\in\sigma(V)$.
 \end{lemma}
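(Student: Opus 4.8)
The plan is to show that the function $h$, which counts at each $\omega \in \sigma(V)$ how many of the $s$-eigenspaces $V_{a_j}$ have $\omega$ in their spectrum, coincides almost everywhere with $k(\omega) = \#\Sigma(\omega)$, the number of distinct eigenvalues of $R(\omega)$. The key observation is the direct-sum decomposition $J(\omega) = J_{V_{a_1}}(\omega) \oplus \dots \oplus J_{V_{a_m}}(\omega)$ established in part (i) of Remark \ref{rem:of Theo L diagonalizable}, together with the fact that, by Proposition \ref{prop-eigen}, each $J_{V_{a_j}}(\omega)$ equals $\ker(R(\omega) - \lambda_{a_j}(\omega)\mathcal{I}_\omega)$ and is nonzero precisely for a.e. $\omega \in \sigma(V_{a_j})$.

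First I would fix $\omega \in \sigma(V)$ (outside the relevant null sets) and let $S(\omega) := \{\, j \in \{1,\dots,m\} : \omega \in \sigma(V_{a_j}) \,\}$, so that $h(\omega) = \# S(\omega)$. For each $j \in S(\omega)$ the subspace $J_{V_{a_j}}(\omega)$ is a nonzero eigenspace of $R(\omega)$ with eigenvalue $\lambda_{a_j}(\omega) = \hat{a}_j(\omega)$; conversely, for $j \notin S(\omega)$ we have $J_{V_{a_j}}(\omega) = \{0\}$, which contributes nothing to the direct sum. The count $k(\omega)$ of \emph{distinct} eigenvalues will therefore be bounded above by $\# S(\omega) = h(\omega)$, since every eigenvalue of $R(\omega)$ arises as some $\hat{a}_j(\omega)$ with $j \in S(\omega)$ (this uses that the $J_{V_{a_j}}(\omega)$ span $J(\omega)$, so no eigenvalue is missed).

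For the reverse inequality $h(\omega) \leq k(\omega)$, the main point is that distinct indices $j, j' \in S(\omega)$ must give distinct eigenvalues $\hat{a}_j(\omega) \neq \hat{a}_{j'}(\omega)$ for a.e. such $\omega$. This is exactly the content of Proposition \ref{different eigenvalues}: since $V_{a_j} \cap V_{a_{j'}} = \{0\}$ (as they are distinct summands of a direct sum), we get $\hat{a}_j(\omega) \neq \hat{a}_{j'}(\omega)$ for a.e. $\omega \in \sigma(V_{a_j}) \cap \sigma(V_{a_{j'}})$. Applying this to each of the finitely many pairs $(j,j')$ and discarding the union of the resulting null sets, I obtain that for a.e. $\omega \in \sigma(V)$ the eigenvalues $\{\hat{a}_j(\omega) : j \in S(\omega)\}$ are pairwise distinct, hence contribute $\# S(\omega) = h(\omega)$ distinct elements to $\Sigma(\omega)$, giving $h(\omega) \leq k(\omega)$.

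The step I expect to require the most care is the bookkeeping of null sets: each of the facts invoked (the direct-sum decomposition of $J(\omega)$, the identification of $J_{V_{a_j}}(\omega)$ as an eigenspace, and the pairwise-distinctness from Proposition \ref{different eigenvalues}) holds only for a.e. $\omega$, and Proposition \ref{different eigenvalues} must be applied to each of the $\binom{m}{2}$ pairs separately. Since $m$ is finite, the union of all exceptional null sets is still null, so combining the two inequalities yields $h(\omega) = k(\omega)$ for a.e. $\omega \in \sigma(V)$. No genuinely hard estimate is involved; the argument is essentially a pointwise dimension/eigenvalue count made rigorous off a null set.
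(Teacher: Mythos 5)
Your proof is correct and takes essentially the same route as the paper's: both rest on the fiberwise decomposition $J(\omega)=J_{V_{a_1}}(\omega)\oplus\dots\oplus J_{V_{a_m}}(\omega)$ into eigenspaces (Proposition \ref{prop-eigen} together with Theorem \ref{s-diagonalizable then diagonalizable}) and a pointwise count of distinct eigenvalues off a finite union of null sets. The only organizational difference is that the paper runs the count over the level sets of $h$ (the sets $B_P$ indexed by subsets $P\subseteq\{1,\dots,m\}$), whereas you argue via the two inequalities $k\leq h$ and $h\leq k$, invoking Proposition \ref{different eigenvalues} to make explicit the pairwise distinctness of the values $\hat{a}_j(\omega)$ that the paper leaves implicit in the direct sum.
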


\begin{proof}
  The function $h(\w)$ is measurable and we have that $h(\w)\in\{1,\dots,m\}$ for a.e. $\w\in\sigma(V)$.
Let $s\in\{1,\dots,m\}$ and define $\mathcal P_s:=\left\{ P\subseteq \{1,\dots,m\}\,:\, \#P =s\right\}$.
We have that $h^{-1}(s)=\bigcup_{P\in\mathcal P_s}B_P$ where $B_P:=\{\w\in\sigma(V)\,:\,\w\in \sigma(V_{a_j})\,\text{ if and only if }\allowbreak j\in P\}$. 

Given a set $P=\{j_1,\dots,j_s\}\in\mathcal P_s$ we see, by Proposition \ref{prop-eigen} and Theorem \ref{s-diagonalizable then diagonalizable}, that for a.e. $\w\in B_P$, $\hat{a}_{j_1}(\w),\dots,\hat{a}_{j_s}(\w)$ are eigenvalues of $R(\w)$ and
$$J(\w)=J_{V_{a_{j_1}}}(\w)\oplus\dots\oplus J_{V_{a_{j_s}}}(\w)\oplus \bigoplus_{1\leq j\leq m,\,j\notin P}\{0\}.$$
This shows that $k(\w)=s$ for a.e. $\w\in B_P$. 

Since this holds for every $P\in \mathcal P_s$ and for every $s\in\{1,\dots,m\}$, we can deduce that $h(\w)=k(\w)$ for a.e. $\w\in\sigma(V)$.
\end {proof}

Given an $s$-diagonalization $(a_1,\dots,a_m)$ of $L$, the collection of the spectrums $\{\sigma(V_{a_j})\}_{j=1}^m$ is always a covering of $\sigma(V)$. We will prove in Theorem \ref{converse with angle} that every $s$-diagonalizable shift-preserving operator $L$ acting on $V$ always has
an $s$-diagonalization $(a_1,\dots,a_m)$ whose spectrums satisfy
 $\sigma(V_{a_{j+1}})\subseteq \sigma(V_{a_j})$ for every $j=1,\dots,m$. In the following proposition we show that when this inclusion holds, the number of  $s$-eigenspaces in the asociated decomposition and its spectrums are uniquely determined.
Later we will see (Theorem \ref{minimal}) that actually a decomposition that satisfies the above inclusions has a minimal posible number of $s$-eigenspaces.
\begin{proposition}\label{prop:h2}
 Let $V$ be a finitely generated shift-invariant space and $L:V\rightarrow V$ a bounded shift-preserving operator which is $s$-diagonalizable. Let $(a_1, \dots, a_m)$ be an $s$-diagonalization of $L$.  If $\sigma(V_{a_{j+1}})\subseteq \sigma(V_{a_j})$ for every $j=1,\dots,m$, then the spectrums are uniquely determined by
 $$\sigma(V_{a_j})=\{\w\in\sigma(V)\,:\,k(\w)\geq j\}$$  
 for $j=1,\dots,m$ and
 $m=\mathfrak{g}$, where $\mathfrak{g}$ is as in \eqref{Bi}.
\end{proposition}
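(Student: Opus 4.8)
The plan is to leverage the nesting hypothesis $\sigma(V_{a_{j+1}})\subseteq \sigma(V_{a_j})$ together with Lemma \ref{prop:h}, which already identifies the counting function $h(\w)=\sum_{j=1}^m \chi_{\sigma(V_{a_j})}(\w)$ with $k(\w)$ for a.e. $\w\in\sigma(V)$. The nesting makes $\{\sigma(V_{a_j})\}_{j=1}^m$ a decreasing chain of sets (up to null sets), so the whole statement reduces to the elementary structure of an initial segment.

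First I would observe that, because the chain is decreasing, for a.e. fixed $\w\in\sigma(V)$ the index set $\{j\,:\,\w\in\sigma(V_{a_j})\}$ is an initial segment $\{1,\dots,r(\w)\}$. Hence $h(\w)$ equals exactly this top index $r(\w)$, and membership $\w\in\sigma(V_{a_j})$ is equivalent to $j\leq h(\w)$. Combining this with Lemma \ref{prop:h}, which gives $h(\w)=k(\w)$ for a.e. $\w\in\sigma(V)$, I obtain that $\w\in\sigma(V_{a_j})$ if and only if $k(\w)\geq j$, that is,
$$\sigma(V_{a_j})=\{\w\in\sigma(V)\,:\,k(\w)\geq j\}$$
up to a set of measure zero, for each $j=1,\dots,m$. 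Since the right-hand side depends only on $L$ (through $k$) and not on the particular $s$-diagonalization chosen, this already yields the asserted uniqueness of the spectrums.

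It then remains to prove $m=\mathfrak{g}$. Recalling from \eqref{r esssup} that $\mathfrak{g}=\esssup_{\w\in\sigma(V)}k(\w)$, and using the fact (noted after Lemma \ref{prop:h}) that any $s$-diagonalization satisfies $k(\w)\leq m$ for a.e. $\w\in\sigma(V)$, I get $\mathfrak{g}\leq m$. For the reverse inequality I would use that each $\Lambda_{a_j}$ is a genuine $s$-eigenvalue, so by Definition \ref{def: s-eigenvalues} $V_{a_m}\neq\{0\}$; a nonzero element of $V_{a_m}$ has a nonzero fiber on a set of positive measure contained in $\sigma(V_{a_m})$, whence $|\sigma(V_{a_m})|>0$. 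On that set the spectrum formula of the previous paragraph forces $k(\w)\geq m$, so $\esssup_{\w\in\sigma(V)}k(\w)\geq m$, i.e. $\mathfrak{g}\geq m$, giving $m=\mathfrak{g}$. The argument is essentially bookkeeping; the only substantive step is the reduction, via the nesting, of $h(\w)$ to the top index of the chain, which is what lets $k$ recover the sets $\sigma(V_{a_j})$ — no measurability or approximation difficulty arises beyond what Lemma \ref{prop:h} has already settled.
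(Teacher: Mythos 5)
Your proof is correct and follows essentially the same route as the paper: the nesting reduces $h(\w)$ to the top index of the chain, so Lemma \ref{prop:h} gives $\sigma(V_{a_j})=\{\w\in\sigma(V)\,:\,k(\w)\geq j\}$, and then $m=\mathfrak{g}$ follows from $\mathfrak{g}\leq m$ together with the nontriviality of the $s$-eigenspaces. The only cosmetic differences are that you spell out the initial-segment argument the paper dismisses as ``easy to see,'' and you prove $\mathfrak{g}\geq m$ directly (via $|\sigma(V_{a_m})|>0$) where the paper argues by contradiction.
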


\begin{proof}
 Since $\sigma(V_{a_{j+1}})\subseteq \sigma(V_{a_j})$ for every $j=1,\dots, m$, it is easy to see that $\sigma(V_{a_j})=\{\w\in\sigma(V)\,:\,h(\w)\geq j\}$ where $h$ is the function defined in \eqref{eq:function-h}. Then, by Lemma \ref{prop:h}, we immediately  obtain that $\sigma(V_{a_j})=\{\w\in\sigma(V)\,:\,k(\w)\geq j\}$ for every $j=1,\dots,m$. 

We know that  $\mathfrak{g}\leq m$. Suppose that $\mathfrak{g}< m$. Then, we have that $|\sigma(V_{a_j})|=0$ for every $\mathfrak{g}<j\leq m$. This implies that $V_{a_j}=\{0\}$ for every $\mathfrak{g}<j\leq m$, which is not possible. Thus, $m=\mathfrak{g}$.
\end{proof}

\subsection{Range operators with diagonalizable fibers}
 The converse of Theorem \ref{s-diagonalizable then diagonalizable} requieres more work. If we assume that for  almost each $\omega$ the range operator $R(\omega)$ is diagonalizable,
 then we can obtain a decomposition  of $J(\omega)$ into a finite number of measurable eigenspaces as we show in the theorem below. However, in order that this decomposition corresponds to an $s$-diagonalization of the shift-preserving operator $L$ in $V$, we need to add an extra uniformity hypothesis which we will explain in the next subsection. 
  
 \begin{theorem}\label{converse-no angle}
Let $V$ be a finitely generated shift-invariant space with associated range function $J$, $L:V\rightarrow V$ a shift-preserving operator with corresponding range operator $R$ and $\mathfrak{g}$ as in \eqref{Bi}. If $R(\w)$ is diagonalizable for a.e. $\w\in\sigma(V)$,  then there exist sequences of bounded spectrum $a_1,\dots,a_{\mathfrak{g}}$ such that the measurable range functions $J_{a_j}(\w):=\ker(R(\w)-{\hat{a}_j}(\w)\mathcal {I}_\omega)$ for $j=1,\dots,\mathfrak{g}$ satisfy  the following statements:
\begin{enumerate}[\rm(i)]
\item $J(\w)=J_{a_1}(\w)\oplus \dots \oplus J_{a_{\mathfrak{g}}}(\w)$ for a.e. $\w\in [0,1)^d$.
\item The sets $C_j=\left\{\w\in\sigma(V)\,:\, J_{a_j}(\w)\neq \{0\}\right\}$ have positive measure for every $j=1,\dots,{\mathfrak{g}}$
and $C_{j+1}\subseteq C_j$ for every $j=1,\dots,{\mathfrak{g}}-1$.
\end{enumerate}
In particular, $\Lambda_{a_1}, \dots, \Lambda_{a_{\mathfrak{g}}}$ are $s$-eigenvalues of $L$.  
\end{theorem}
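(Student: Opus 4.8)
The plan is to construct the sequences $a_1,\dots,a_{\mathfrak{g}}$ explicitly by invoking the measurable-eigenvalue machinery of Theorem \ref{thm:autovalores-full}, and then to verify the two claimed properties using the diagonalizability hypothesis together with the bookkeeping already set up in Remark \ref{inclusion of spec}. First I would apply Theorem \ref{thm:autovalores-full} to the range operator $R$ (which is bounded, since $L$ is a bounded shift-preserving operator, and acts on a range function with $\dim J(\w)\leq \mathcal L(V)<\infty$), obtaining functions $\lambda_j\in L^\infty([0,1)^d)$ for $j\in\N$. By part (i) of Remark \ref{inclusion of spec}, only the first $\mathfrak{g}$ of these are genuine eigenvalues on a set of positive measure. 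For each $j=1,\dots,\mathfrak{g}$, Lemma \ref{eigenvalue of R} produces a sequence $a_j\in\ell^2(\Z^d)$ of bounded spectrum with $\hat a_j(\w)=\lambda_j(\w)$ a.e. on the set $C_j$ where $\lambda_j$ is an eigenvalue, and such that $\Lambda_{a_j}$ is an $s$-eigenvalue of $L$; this gives the final sentence of the statement for free. I would set $J_{a_j}(\w):=\ker(R(\w)-\hat a_j(\w)\mathcal{I}_\omega)$, which is a measurable range function by part (ii) of Proposition \ref{prop-eigen} (equivalently by (i) of Proposition \ref{measurable-prop-range} applied to the measurable range operator $\w\mapsto R(\w)-\hat a_j(\w)\mathcal{I}_\omega$).

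For statement (ii), the identification $C_j=\{\w\in\sigma(V):J_{a_j}(\w)\neq\{0\}\}=\{\w:\ker(R(\w)-\lambda_j(\w)\mathcal{I}_\omega)\neq\{0\}\}$ is exactly the set described in part (ii) of Remark \ref{inclusion of spec}, so the nesting $C_{j+1}\subseteq C_j$ and the positivity $|C_j|>0$ for $j\leq\mathfrak{g}$ are immediate from that remark and from the definition of $\mathfrak{g}$ in \eqref{Bi} as the largest index with $|B_j|>0$.

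The heart of the argument is statement (i), the pointwise direct-sum decomposition. Here is where the diagonalizability of $R(\w)$ enters essentially: for a.e. $\w\in\sigma(V)$ with $k(\w)=i$ distinct eigenvalues, part (ii) of Theorem \ref{thm:autovalores-full} guarantees that the values $\{\hat a_1(\w),\dots,\hat a_i(\w)\}=\{\lambda_1(\w),\dots,\lambda_i(\w)\}$ are precisely the distinct eigenvalues $\Sigma(\w)$ of $R(\w)$, while $\hat a_j(\w)$ for $j>i$ is the dummy value $K+j$, so that $J_{a_j}(\w)=\{0\}$ there. Since $R(\w)$ is diagonalizable, its eigenspaces associated to the distinct eigenvalues span all of $J(\w)$ and are in direct sum; that is,
$$J(\w)=\bigoplus_{\lambda\in\Sigma(\w)}\ker(R(\w)-\lambda\mathcal{I}_\omega)=\bigoplus_{j=1}^{i}J_{a_j}(\w),$$
and appending the zero summands $J_{a_j}(\w)=\{0\}$ for $i<j\leq\mathfrak{g}$ yields $J(\w)=J_{a_1}(\w)\oplus\dots\oplus J_{a_{\mathfrak{g}}}(\w)$. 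For $\w\notin\sigma(V)$ both sides are $\{0\}$. The main obstacle I anticipate is purely one of careful bookkeeping rather than of mathematical depth: one must confirm that the functions $\lambda_j$ are simultaneously (a) eigenvalues on exactly the intended sets $A_{n,i}$ with $j\leq i$, (b) pairwise distinct a.e. so the summands are genuinely distinct eigenspaces, and (c) out-of-range (hence yielding trivial kernels) on the complementary sets — all three facts being furnished by Theorem \ref{thm:autovalores-full}, so the diagonalizability hypothesis is used precisely to pass from "the eigenspaces are in direct sum" to "the eigenspaces fill up $J(\w)$."
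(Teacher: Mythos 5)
Your proposal follows essentially the same route as the paper's proof: both obtain the measurable eigenvalue functions $\lambda_1,\dots,\lambda_{\mathfrak{g}}$ from Theorem \ref{thm:autovalores-full} together with Remark \ref{inclusion of spec}, use the diagonalizability of $R(\w)$ on each set $A_{n,i}$ to conclude that the eigenspaces fill up $J(\w)$ (appending zero summands elsewhere), and read statement (ii) off Remark \ref{inclusion of spec}. One minor wording point: you should take $\hat{a}_j(\w)=\lambda_j(\w)$ for a.e. $\w$ in all of $[0,1)^d$ (possible since $\lambda_j\in L^\infty([0,1)^d)$ is globally defined), not merely on $C_j$, because your later argument that $J_{a_j}(\w)=\{0\}$ off $C_j$ relies on $\hat{a}_j(\w)$ taking the dummy value $K+j$ there; this global identification is exactly what the paper does.
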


\begin{proof}
Assume that $\mathcal L(V)=\ell$. Then,  by Theorem \ref{thm:autovalores-full} and (i) of Remark  \ref{inclusion of spec}, there exist measurable functions $\lambda_1, \dots, \lambda_{\mathfrak{g}} \in L^\infty([0,1)^d)$ based on the partition  $\{A_{n,i}\}_{1\leq i \leq n\leq \ell}$ of $\sigma(V)$ given in Theorem \ref{thm:autovalores-full}.

Then, we have the decomposition
	\begin{equation}\label{decomp of J w lambda_j}
	J(\w) = \bigoplus_{j=1}^{\mathfrak{g}} \ker(R(\w)-\lambda_{j}(\w)\mathcal {I}_\omega),
	\end{equation}
	for a.e. $\w\in [0,1)^d$. Indeed, for a.e. $\w\notin \sigma(V)$ we have that $J(\w)=\{0\}$ and $\ker(R(\w)-\lambda_{j}(\w)\mathcal {I}_\omega)=\{0\}$ for $j=1,\dots, {\mathfrak{g}}$. Whereas, for a.e. $\w\in A_{n,i}$, for some $n,i$, we have that 
	$$\bigoplus_{j=1}^{\mathfrak{g}}\ker(R(\w)-\lambda_{j}(\w)\mathcal {I}_\omega) = \bigoplus_{j=1}^{i} \ker(R(\w)-\lambda_{j}^{n,i}(\w)\mathcal {I}_\omega) \oplus \bigoplus_{j=i}^{{\mathfrak{g}}} \{0\},$$
	which is equal to $J(\w)$ because $\{\lambda_{j}^{n,i}(\w)\}_{j=1}^i$ are the eigenvalues of  $R(\w)$ on $A_{n,i}$ and $R(\w)$ is diagonalizable for a.e. $\w\in [0,1)^d$.

Finally, for each $j=1,\dots,{\mathfrak{g}}$,  $\lambda_j\in L^\infty([0,1)^d)$, then, there exists a sequence $a_j\in \ell^2(\mathbb Z^d)$ of bounded spectrum such that, $\lambda_j(\w)=\hat{a}_j(\w)$   for a.e. $\w\in[0,1)^d$ and a measurable range function $J_{a_j}(\w)=\ker(R(\w)-\hat{a}_j(\w)\mathcal {I}_\omega)$. Since we have that \eqref{decomp of J w lambda_j} is true for a.e. $\w\in [0,1)^d$, these range functions decompose $J(\w)$ as we state in (i). Moreover, by item (ii) in Remark \ref{inclusion of spec}, we see that the sets $C_j$ have positive measure and $C_{j+1}\subseteq C_j$ for every $j=1,\dots, {\mathfrak{g}}-1$.
\end{proof}

In the conditions of Theorem \ref{converse-no angle}, 
 by Proposition \ref{prop-eigen}, each $J_{a_j}$ is the range function associated to the shift-invariant space $V_{a_j}$, i.e. $J_{a_j}=J_{V_{a_j}},$ and  we get the decomposition
$J(\w)=J_{V_{a_1}}(\w)\oplus \dots \oplus J_{V_{a_{\mathfrak{g}}}}(\w)$
 for a.e. $\w\in [0,1)^d$.
 At this point, one would like to conclude that $V=V_{a_1}\oplus\dots\oplus V_{a_{\mathfrak{g}}}$ and hence deduce that $L$ is $s$-diagonalizable. 
 For this to be true, a few considerations regarding the sum of infinite dimensional spaces must be taken. 
 Recall that, when the sum is not orthogonal, the sum of infinite dimensional closed subspaces is not necessarily closed. In particular, the sum of shift-invariant spaces is not closed in general, although it is invariant under integer translations. (See \cite{KKL2,AC} for examples of two shift-invariant spaces whose sum is not closed). A condition which allows us to tackle this problem is described in the next subsection. 
 
 \subsection{Angle between subspaces}
 
 We now turn to the definition of the angle between subspaces which provides a condition to determine  when  the sum of subspaces is closed (see \cite{D} and the references therein).
 
\begin{definition}
 	Let $\mathcal H$ be a Hilbert space. The {\it angle} of the $r$-tuple of closed subspaces $(M_1,\dots,M_r)$ in $\mathcal H$ is the angle in $[0,\pi/2]$ whose cosine is given by 
 	\begin{equation*}\label{c_b}
 	c_b(M_1,M_2,\dots,M_r):= \left\| P_{M_r}\dots P_{M_2}P_{M_1}P_{M_0^\perp} \right\|.
 	\end{equation*}
 \end{definition}
 where $M_0=\bigcap\limits_{j=1}^{r} M_j.$
 
 Notice that when $r=2$ this definition is equivalent to the definition of the Friedrichs angle between two spaces \cite{F}. The following lemma gives a characterization of when $c_b(M_1,\dots,M_r)<1$ and was proved in \cite[Theorem 3.7.4]{BBL}.
 
 \begin{lemma}\label{c_b<1} Let $M_1,\dots,M_r$ be closed subspaces of a Hilbert space $\mathcal H$. The subsequent statements are equivalent:
 	\begin{enumerate}[\rm (i)]
 		\item $c_b(M_1,\dots,M_r)<1,$
 		\item $M_1^{\perp} + \dots + M_r^{\perp}$ is closed.
 	\end{enumerate}
 \end{lemma}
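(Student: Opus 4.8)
The plan is to recast both conditions as coercivity statements for quadratic forms on $M_0^{\perp}$, where $M_0:=\bigcap_{j=1}^{r}M_j$, and then to bridge them through a telescoping identity. Write $N_j:=M_j^{\perp}$, $P_j:=P_{M_j}$ and $Q_j:=P_{N_j}=I-P_j$, and note that $M_0^{\perp}=\overline{N_1+\dots+N_r}$, so that (ii) asks precisely that $N_1+\dots+N_r=M_0^{\perp}$. First I would handle (ii) via the synthesis operator $\Phi\colon\bigoplus_{j=1}^{r}N_j\to\mathcal H$, $\Phi(x_1,\dots,x_r)=x_1+\dots+x_r$, whose range is $N_1+\dots+N_r$ and whose adjoint is $\Phi^{*}y=(Q_1y,\dots,Q_ry)$. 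Since closedness of the range is shared by $\Phi$, $\Phi^{*}$ and $\Phi\Phi^{*}$, and $\Phi\Phi^{*}=T:=\sum_{j=1}^{r}Q_j$ is a positive self-adjoint operator with $\ker T=M_0$, the spectral-gap characterization of closed range for a positive operator shows that (ii) is equivalent to the existence of $\gamma>0$ with
\[
\langle Tx,x\rangle=\sum_{j=1}^{r}\|Q_jx\|^{2}\ge\gamma\|x\|^{2}\qquad(x\in M_0^{\perp}).
\]

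Next I would reformulate (i). Setting $A:=P_r\cdots P_1$, each $P_j$ fixes $M_0$, so $AP_{M_0}=P_{M_0}A=P_{M_0}$ and $A$ is block-diagonal for $\mathcal H=M_0\oplus M_0^{\perp}$: $A=I_{M_0}\oplus B$ with $B:=A|_{M_0^{\perp}}$. Consequently $c_b(M_1,\dots,M_r)=\|AP_{M_0^{\perp}}\|=\|B\|$, and (i) is equivalent to $B$ being a strict contraction, i.e. to the existence of $\delta>0$ with $\|x\|^{2}-\|Ax\|^{2}\ge\delta\|x\|^{2}$ for all $x\in M_0^{\perp}$. The bridge between the two conditions is the telescoping identity coming from $\|y\|^{2}=\|P_jy\|^{2}+\|Q_jy\|^{2}$: with $y_0=x$ and $y_j=P_jy_{j-1}$ (so $y_r=Ax$),
\[
D(x):=\|x\|^{2}-\|Ax\|^{2}=\sum_{j=1}^{r}\bigl(\|y_{j-1}\|^{2}-\|y_j\|^{2}\bigr)=\sum_{j=1}^{r}\|Q_jy_{j-1}\|^{2}.
\]
Thus $D(x)$ and $\langle Tx,x\rangle$ are the same sum of squared defects, the only difference being that $Q_j$ acts on the partially projected vector $y_{j-1}$ instead of $x$; everything reduces to comparing them, which is governed by the drift $\|x-y_{j-1}\|\le\sum_{l<j}\|Q_ly_{l-1}\|$.

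One comparison is easy and yields (ii)$\Rightarrow$(i): from $\|x-y_{j-1}\|^{2}\le(r-1)D(x)$ and the elementary bound $\|Q_jy_{j-1}\|^{2}\ge\tfrac12\|Q_jx\|^{2}-\|x-y_{j-1}\|^{2}$, summation gives $\langle Tx,x\rangle\le 2\bigl(1+r(r-1)\bigr)D(x)$, so coercivity of $T$ forces coercivity of $D$. The hard part is the reverse bound $D(x)\le C(r)\langle Tx,x\rangle$, and this is the main obstacle: estimating the drift crudely reintroduces $D(x)$ with a coefficient that swamps the gain. The plan is to avoid this by a discrete Grönwall argument. Setting $d_j:=\|Q_jy_{j-1}\|$ and $e_j:=\|Q_jx\|$, the drift estimate gives $d_j\le e_j+\sum_{l<j}d_l$; writing $s_j:=\sum_{l\le j}d_l$ this reads $s_j\le 2s_{j-1}+e_j$, whence $s_r\le 2^{r-1}\sum_{j}e_j$ and therefore
\[
D(x)=\sum_{j=1}^{r}d_j^{2}\le\Bigl(\sum_{j=1}^{r}d_j\Bigr)^{2}\le 4^{r-1}r\,\langle Tx,x\rangle.
\]
This delivers (i)$\Rightarrow$(ii) and closes the equivalence. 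The only external inputs are the transfer of closed range among $\Phi,\Phi^{*},\Phi\Phi^{*}$ and the spectral-gap criterion; I would also note that $\|B\|<1$ is unchanged under reversing the order of the $P_j$, since the reversed product is the adjoint of $A$ and has the same norm.
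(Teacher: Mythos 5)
Your proof is correct, and it is necessarily a different route from the paper's, because the paper does not prove this lemma at all: it simply cites \cite[Theorem 3.7.4]{BBL}. Your argument is a self-contained operator-theoretic proof. You convert (ii) into coercivity of the quadratic form of $T=\sum_{j}(I-P_{M_j})$ on $M_0^{\perp}$ via the synthesis operator $\Phi$ and the standard transfer of closed range among $\Phi$, $\Phi^{*}$ and $\Phi\Phi^{*}$; you convert (i) into coercivity of the defect form $D(x)=\|x\|^{2}-\|P_{M_r}\cdots P_{M_1}x\|^{2}$ on $M_0^{\perp}$ via the telescoping identity; and you close the loop with the two-sided comparison $\langle Tx,x\rangle\le 2\bigl(1+r(r-1)\bigr)D(x)$ and $D(x)\le 4^{r-1}r\,\langle Tx,x\rangle$, the latter by the discrete Gr\"onwall estimate. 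I checked the steps: $\ker T=M_0$, the spectral-gap criterion for positive operators, the telescoping identity, both drift estimates, the induction $s_r\le 2^{r-1}\sum_j e_j$, and the matching of each comparison with the implication it yields ((ii)$\Rightarrow$(i) from the first, (i)$\Rightarrow$(ii) from the second) are all sound. Compared with \cite{BBL}, where the statement sits inside the machinery of (boundedly) linearly regular collections of convex sets, your proof is elementary and has the added virtue of explicit constants relating the angle to the coercivity constant of $T$. One small step deserves a line of justification: you assert $P_{M_0}A=P_{M_0}$ from ``each $P_j$ fixes $M_0$,'' but fixing $M_0$ directly gives only $AP_{M_0}=P_{M_0}$; the other identity follows by taking adjoints (since $A^{*}=P_1\cdots P_r$ also fixes $M_0$, one gets $P_{M_0}A=(A^{*}P_{M_0})^{*}=P_{M_0}$), or by noting that $P_{M_0}P_{M_j}=P_{M_j}P_{M_0}=P_{M_0}$ for the nested subspaces $M_0\subseteq M_j$. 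With that line added, the block decomposition $A=I_{M_0}\oplus B$ and the identity $c_b(M_1,\dots,M_r)=\|B\|$ are fully justified.
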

 
 The cosine angle between two shift-invariant spaces was first considered in \cite{KKL1,KKL2}. In the next two results the shift-invariant spaces considered are not necessarily finitely generated. The following lemma gives a relation between the angle of a tuple of shift-invariant spaces and the angle of a tuple of their respective range functions. 
 
 \begin{lemma}\label{supess cb}
 	Let $V_1,\dots,V_r$ be shift-invariant spaces of $L^2(\R^d)$ and $J_{V_1},\dots,J_{V_r}$ their respective range functions. Then
 	\begin{equation}\label{c_b supess}
 	c_b(V_1,\dots,V_r) = \underset{\omega\in [0,1)^d}{\text{ess sup }} c_b\left( J_{V_1}(\w),\dots,J_{V_r}(\w)\right).
 	\end{equation}
 \end{lemma}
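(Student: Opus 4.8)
The plan is to prove the identity \eqref{c_b supess} by establishing both inequalities, exploiting the fact that the fiberization map $\mathcal T$ is an isometric isomorphism which, by \eqref{modulation}, intertwines translation with modulation. The crucial observation is that by Lemma \ref{Helson projections}, the orthogonal projection $P_{V_i}$ is itself shift-preserving and its range operator is the fiberwise projection $P_{J_{V_i}(\w)}$, as recorded in Example \ref{ex:projection}. Since the product of shift-preserving operators is shift-preserving and the composition of range operators is carried out fiberwise, the operator $P_{V_r}\cdots P_{V_1}P_{V_0^\perp}$ is a shift-preserving operator whose associated range operator is $\w\mapsto P_{J_{V_r}(\w)}\cdots P_{J_{V_1}(\w)}P_{J_{V_0^\perp}(\w)}$ for a.e. $\w\in[0,1)^d$.

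First I would verify that $V_0:=\bigcap_{j=1}^r V_j$ has range function $J_{V_0}(\w)=\bigcap_{j=1}^r J_{V_j}(\w)$ for a.e. $\w$; this follows by iterating item (iii) of Lemma \ref{range-properties}. Consequently, by item (i) of Lemma \ref{range-properties}, $V_0^\perp$ has range function $J_{V_0^\perp}(\w)=\left(\bigcap_{j=1}^r J_{V_j}(\w)\right)^\perp$, which is exactly the orthogonal complement of the fiber intersection appearing in the definition of $c_b\left(J_{V_1}(\w),\dots,J_{V_r}(\w)\right)$. Thus the range operator of $P_{V_r}\cdots P_{V_1}P_{V_0^\perp}$ equals, fiberwise a.e., the operator $P_{J_{V_r}(\w)}\cdots P_{J_{V_1}(\w)}P_{(J_{V_0}(\w))^\perp}$, whose norm is precisely $c_b\left(J_{V_1}(\w),\dots,J_{V_r}(\w)\right)$.

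With this identification in hand, the result is immediate from Bownik's norm formula \eqref{norm of R and L}: the operator norm of a shift-preserving operator equals the essential supremum over $\w$ of the norms of its range-operator fibers. Applying \eqref{norm of R and L} to the shift-preserving operator $P_{V_r}\cdots P_{V_1}P_{V_0^\perp}$, whose norm is by definition $c_b(V_1,\dots,V_r)$, yields
$$c_b(V_1,\dots,V_r)=\left\|P_{V_r}\cdots P_{V_1}P_{V_0^\perp}\right\|=\underset{\w\in[0,1)^d}{\esssup}\left\|P_{J_{V_r}(\w)}\cdots P_{J_{V_1}(\w)}P_{(J_{V_0}(\w))^\perp}\right\|,$$
which is exactly the right-hand side of \eqref{c_b supess}.

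The main obstacle I anticipate is the careful justification that composing shift-preserving operators corresponds to composing their range operators fiberwise almost everywhere, and in particular that the range operator of the composite equals the pointwise product of the individual range operators for a.e. $\w$. This requires invoking the one-to-one correspondence between bounded shift-preserving operators and bounded measurable range operators, together with a measurability check that the fiberwise product $\w\mapsto P_{J_{V_r}(\w)}\cdots P_{J_{V_1}(\w)}P_{J_{V_0^\perp}(\w)}$ is a genuine measurable range operator; the uniform boundedness needed to apply \eqref{norm of R and L} is automatic since each projection fiber has norm at most one. A secondary technical point is the identification $J_{V_0^\perp}(\w)=(J_{V_0}(\w))^\perp$, which is clean given Lemma \ref{range-properties} but must be stated explicitly so that the fiber of $P_{V_0^\perp}$ matches the projection onto the orthogonal complement of the fiber intersection in the definition of the fiberwise angle.
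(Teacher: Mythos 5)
Your proposal is correct and follows essentially the same route as the paper's own proof: both identify $P_{V_r}\cdots P_{V_1}P_{V_0^\perp}$ as a shift-preserving operator whose range operator is the fiberwise product of projections (via Lemma \ref{Helson projections}, Example \ref{ex:projection}, and items (i) and (iii) of Lemma \ref{range-properties}), and then conclude by Bownik's norm formula \eqref{norm of R and L}. The only difference is cosmetic: the paper defines $U=\bigl(\bigcap_{j=1}^r V_j\bigr)^{\perp}$ in one step, while you split the identification of its range function into the intersection step and the orthogonal-complement step.
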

 
 This lemma was proved in \cite{AC} for the case of $r=2$. The proof is easy to extend to our context, as we show here.
 
 \begin{proof}
 	Let us call $U := \left(\bigcap\limits_{j=1}^{r} V_j\right)^{\perp}$. By virtue of (iii) in Lemma \ref{range-properties}, $U$ is a shift-invariant space and $$J_U(\w) = \left(\bigcap\limits_{j=1}^{r} J_{V_j}(\w)\right)^{\perp},$$
 	for a.e. $\w\in[0,1)^d$. 
 	
 	Since orthogonal projections onto shift-invariant spaces are shift-preserving operators (see Example \ref{ex:projection}), we have that $P_{V_r}\dots P_{V_2}P_{V_1}P_{U}$
 	is shift preserving and its range operator at $\w$ is 
 	$$P_{J_{V_r}(\w)}\dots P_{J_{V_2}(\w)}P_{J_{V_1}(\w)}P_{J_{U}(\w)}.$$
 	Hence, by \eqref{norm of R and L}
 	$$\underset{\omega\in [0,1)^d}{\esssup} \left\| P_{J_{V_r}(\w)}\dots P_{J_{V_2}(\w)}P_{J_{V_1}(\w)}P_{J_{U}(\w)} \right\| = \left\| P_{V_r}\dots P_{V_2}P_{V_1}P_{U}\right\|,$$
 	and consequently (\ref{c_b supess}) holds.
 \end{proof}
 
 As a consequence, we have: 
 
 \begin{proposition}\label{equivalence sum of sis}
 	Let $V_1,\dots,V_r$ be shift-invariant spaces of $L^2(\R^d)$ and $J_{V_1},\dots,J_{V_r}$ their respective range functions. The following statements are equivalent:
 	\begin{enumerate}[\rm(i)]
 		\item The space $U:=V_1+\dots+V_r$ is closed. In particular, it is a shift-invariant space of $L^2(\R^d)$. 
 		\item $c_b(V_1^{\perp},\dots,V_r^{\perp})<1$.
 		\item $\underset{\omega\in [0,1)^d}{\esssup\,}\, c_b\left( J_{V_1^{\perp}}(\w),\dots,J_{V_r^{\perp}}(\w)\right)<1$.
 		\item $\underset{\omega\in [0,1)^d}{\esssup}\,\, c_b\left( J_{V_1}(\w)^{\perp},\dots,J_{V_r}(\w)^{\perp}\right)<1$.
 	\end{enumerate}
 	Moreover, if {\rm (i)-(iv)} hold, then we have that $J_{U}(\w) = J_{V_1}(\w)+\dots+ J_{V_r}(\w)$, for a.e. $\w\in[0,1)^d$.
 \end{proposition}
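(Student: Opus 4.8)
The plan is to obtain the four conditions as a short chain of equivalences, each a one-line application of a lemma already available, and then to settle the final identity by a fiberwise computation of the range function of $U$.

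For (i)$\Leftrightarrow$(ii) I would apply Lemma \ref{c_b<1} with $M_j=V_j^\perp$. As each $V_j$ is closed, $M_j^\perp=V_j$, and the lemma reads: $c_b(V_1^\perp,\dots,V_r^\perp)<1$ if and only if $V_1+\dots+V_r=U$ is closed. Since an algebraic sum of shift-invariant spaces is always translation invariant, once $U$ is closed it is a shift-invariant space, which is the extra clause in (i). For (ii)$\Leftrightarrow$(iii) I would apply Lemma \ref{supess cb} to the spaces $V_1^\perp,\dots,V_r^\perp$, which are shift invariant by (i) of Lemma \ref{range-properties}; this gives $c_b(V_1^\perp,\dots,V_r^\perp)=\esssup_\omega c_b(J_{V_1^\perp}(\omega),\dots,J_{V_r^\perp}(\omega))$, so the two quantities are equal and the conditions coincide. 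Finally (iii)$\Leftrightarrow$(iv) is immediate from (i) of Lemma \ref{range-properties}: since $J_{V_j^\perp}(\omega)=J_{V_j}(\omega)^\perp$ for a.e. $\omega$, the two integrands agree a.e. and hence so do their essential suprema.

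For the ``moreover'' I would pass to orthogonal complements. The elementary identity $U^\perp=(V_1+\dots+V_r)^\perp=V_1^\perp\cap\dots\cap V_r^\perp$ holds with no closedness assumption, and iterating (iii) of Lemma \ref{range-properties} together with (i) of the same lemma yields
$$J_U(\omega)^\perp=J_{U^\perp}(\omega)=\bigcap_{j=1}^{r}J_{V_j}(\omega)^\perp\qquad\text{for a.e. }\omega.$$
Taking orthogonal complements fiberwise gives $J_U(\omega)=\overline{J_{V_1}(\omega)+\dots+J_{V_r}(\omega)}$ for a.e. $\omega$. To remove the closure I would use condition (iv): it forces $c_b(J_{V_1}(\omega)^\perp,\dots,J_{V_r}(\omega)^\perp)<1$ for a.e. $\omega$, so Lemma \ref{c_b<1} (now with $M_j=J_{V_j}(\omega)^\perp$) shows that the fiber sum $J_{V_1}(\omega)+\dots+J_{V_r}(\omega)$ is already closed a.e., and the desired equality follows.

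I expect the equivalences to be routine and the ``moreover'' to be the only delicate part. The two points to handle carefully there are that (iii) of Lemma \ref{range-properties} is stated for two subspaces and must be iterated to $r$ of them, and that the complement identity $U^\perp=\bigcap_j V_j^\perp$ is valid for the algebraic sum regardless of whether it is closed, so that the range-function computation producing $J_U(\omega)=\overline{J_{V_1}(\omega)+\dots+J_{V_r}(\omega)}$ is legitimate and condition (iv) is invoked only at the very last step to drop the closure.
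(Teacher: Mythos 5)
Your proof of the four equivalences is exactly the paper's: (i)$\Leftrightarrow$(ii) by Lemma \ref{c_b<1} applied to $V_1^\perp,\dots,V_r^\perp$, (ii)$\Leftrightarrow$(iii) by Lemma \ref{supess cb}, and (iii)$\Leftrightarrow$(iv) by item (i) of Lemma \ref{range-properties}. Where you genuinely diverge is the ``moreover'' clause. The paper argues by double inclusion, mimicking the proof of Theorem \ref{s-diagonalizable then diagonalizable}: the inclusion $J_{V_1}(\w)+\dots+J_{V_r}(\w)\subseteq J_U(\w)$ follows from $V_j\subseteq U$, the reverse inclusion $J_U(\w)\subseteq\overline{J_{V_1}(\w)+\dots+J_{V_r}(\w)}$ follows by fibering decompositions $f=f_1+\dots+f_r$ of elements of $U$, and then (iv) closes the fiber sum. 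You instead pass to orthogonal complements: the identity $U^\perp=\bigcap_j V_j^\perp$ (valid for the algebraic sum), the iterated intersection formula from item (iii) of Lemma \ref{range-properties}, and item (i) give $J_U(\w)^\perp=\bigcap_j J_{V_j}(\w)^\perp$, whence $J_U(\w)=\overline{J_{V_1}(\w)+\dots+J_{V_r}(\w)}$, and you also invoke (iv) fiberwise (via Lemma \ref{c_b<1} with $M_j=J_{V_j}(\w)^\perp$) only at the last step to drop the closure. Both arguments are correct and of comparable length; yours is more formally self-contained, resting entirely on stated lemmas rather than on an ``analogously as in'' reference to another proof, at the cost of needing the iteration of the two-space intersection lemma to $r$ spaces (a point you rightly flag); the paper's double-inclusion argument avoids that lemma entirely and is slightly more elementary.
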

 
 \begin{proof}
 	From Lemma \ref{c_b<1} we know that (i) and (ii) are equivalent. By Lemma \ref{supess cb} we see that (ii) is equivalent to (iii). Finally, the equivalence between (iii) and (iv) is immediate from item (i) in Lemma \ref{range-properties}.
 	
 	It remains to see that $J_{U}(\w) = J_{V_1}(\w)+\dots +J_{V_r}(\w)$. Analogously as in Theorem \ref{s-diagonalizable then diagonalizable}, we see that 
 	$J_{V_1}(\w)+\dots+ J_{V_r}(\w) \subseteq J_{U}(\w)$ and 
 	$J_{U}(\omega) \subseteq \overline{J_{V_1}(\omega)+\dots+J_{V_{r}}(\omega)}.$
 	These spaces are not necessarily finite dimensional but, given that (iv) holds, the latter sum is closed for almost every $\w$.
 \end{proof}
 
 \subsection{Sufficient conditions for $s$-diagonalization and minimality}
Using Proposition \ref{equivalence sum of sis}, we are able to give sufficient   conditions for a shift-preserving operator to be  $s$-diagonalizable. 

As we already discussed, in order to deduce that $L$ is $s$-diagonalizable from the hypothesis that $R(\w)$ is diagonalizable for a.e. $\w,$ an uniformity condition on the angle must be required.

For this, let $R$ be a measurable range operator on $J$ such that $R(\w)$ is diagonalizable almost everywhere and let $Z\subseteq [0,1)^d$ be the exceptional zero measure set. Then, for $\w\in[0,1)^d\setminus Z$ and if $k(\w)$ is as in \eqref{eq:K}, we know that 
$$J(\w)=E_{\mu_1(\w)}\oplus \dots \oplus E_{\mu_{k(\w)}(\w)},$$
where $\mu_1(\w),\dots, \mu_{k(\w)}(\w)$ are the different eigenvalues of $R(\w)$ and $E_{\mu_1(\w)}\dots E_{\mu_{k(\w)}(\w)}$ are their associated eigenspaces. Note that at this point we do not care about measurability. Then, we can define
\begin{equation}\label{eq:Cb}
C_b(\w):=c_b\left(E_{\mu_1(\w)}^{\perp},\dots, E_{\mu_{k(\w)}(\w)}^{\perp}\right).
\end{equation}
Suppose now that  $a_1,\dots,a_m\in\ell^2(\Z^d)$
are any sequences of bounded spectrum such that $\{J_{a_j}(\w):=\ker(R(\w)-{\hat{a}_j}(\w)\mathcal {I}_\omega)\}_{j=1}^m$ decomposes $J(\w)$ as 
$$J(\w)=J_{a_1}(\w)\oplus \dots \oplus J_{a_m}(\w)$$
for a.e. $\w\in [0,1)^d$. The existence of such sequences is guaranteed by Theorem \ref{converse-no angle}. Thus, for $\w\in [0,1)^d\setminus Z$ we have that 
$$\{E_{\mu_1(\w)}\dots E_{\mu_{k(\w)}(\w)}\}=\{J_{a_1}(\w), \dots, J_{a_m}(\w) \}\setminus\{0\}.$$
Therefore, $C_b(\w)=c_b\left( J_{a_1}(\w)^{\perp},\dots,J_{a_m}(\w)^{\perp}\right)$ almost everywhere and as a consequence $C_b$ is measurable.   

 \begin{theorem}\label{converse with angle}
 	Let $V$ be a finitely generated shift-invariant space with associated range function $J$ and $L:V\rightarrow V$ a shift-preserving operator with corresponding range operator $R$. 
	The following propositions are equivalent:
\begin{enumerate}[\rm (i)]
	\item\label{1} $L$ is $s$-diagonalizable.
	\item\label{2} $R(\w)$ is diagonalizable for a.e. $\w\in \sigma(V)$
 	and  $\underset{\omega\in [0,1)^d}{\text{\rm ess sup }} C_b(\w)<1$.
 \end{enumerate}
 
Furthermore, when \ref{1} or \ref{2} holds, there  exist sequences $a_1,\dots,a_{\mathfrak{g}}$ of bounded spectrum, such that $(a_1,\dots,a_{\mathfrak{g}})$ is an $s$-diagonalization of $L$ and $\sigma(V_{a_{j+1}})\subseteq\sigma(V_{a_j})$ for $j=1,\dots,\mathfrak{g}-1$, where $\mathfrak{g}$ is as in \eqref{Bi}.
	 \end{theorem}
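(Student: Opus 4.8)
The plan is to prove the two implications separately, using as the central bridge the identity $C_b(\w)=c_b\big(J_{a_1}(\w)^{\perp},\dots,J_{a_m}(\w)^{\perp}\big)$ established just before the statement, which is valid a.e. for any family of $s$-eigenvalues whose kernels decompose $J(\w)$, together with Proposition \ref{equivalence sum of sis}, which converts the uniform bound $\esssup_{\w} c_b<1$ into the assertion that the corresponding sum of shift-invariant spaces is closed and fibers correctly. Throughout I will use that, by Proposition \ref{prop-eigen}, each $J_{a_j}(\w)=\ker(R(\w)-\hat a_j(\w)\mathcal{I}_\w)$ is exactly the range function of the $s$-eigenspace $V_{a_j}$, so that $J_{a_j}=J_{V_{a_j}}$.

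For \ref{1}$\Rightarrow$\ref{2}, let $(a_1,\dots,a_m)$ be an $s$-diagonalization of $L$. Diagonalizability of $R(\w)$ for a.e. $\w\in\sigma(V)$ is precisely Theorem \ref{s-diagonalizable then diagonalizable}, whose proof (see also Remark \ref{rem:of Theo L diagonalizable}) additionally yields the fiber decomposition $J(\w)=J_{V_{a_1}}(\w)\oplus\cdots\oplus J_{V_{a_m}}(\w)$ a.e. Since $V=V_{a_1}\oplus\cdots\oplus V_{a_m}$ is all of $V$, the sum $V_{a_1}+\cdots+V_{a_m}$ is closed, so item (i)$\Rightarrow$(iv) of Proposition \ref{equivalence sum of sis} gives $\esssup_{\w} c_b\big(J_{V_{a_1}}(\w)^{\perp},\dots,J_{V_{a_m}}(\w)^{\perp}\big)<1$. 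By the $C_b$-identity this essential supremum equals $\esssup_{\w} C_b(\w)$, which is exactly the angle condition in \ref{2}.

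For \ref{2}$\Rightarrow$\ref{1}, I apply Theorem \ref{converse-no angle} to produce sequences $a_1,\dots,a_{\mathfrak{g}}$ of bounded spectrum with $J(\w)=J_{a_1}(\w)\oplus\cdots\oplus J_{a_{\mathfrak{g}}}(\w)$ a.e., such that the sets $C_j=\{\w\in\sigma(V):J_{a_j}(\w)\neq\{0\}\}$ have positive measure and $C_{j+1}\subseteq C_j$. The $C_b$-identity now reads $C_b(\w)=c_b\big(J_{a_1}(\w)^{\perp},\dots,J_{a_{\mathfrak{g}}}(\w)^{\perp}\big)$ a.e., so the hypothesis $\esssup_{\w} C_b<1$ and Proposition \ref{equivalence sum of sis} show that $U:=V_{a_1}+\cdots+V_{a_{\mathfrak{g}}}$ is closed with $J_U(\w)=J_{a_1}(\w)+\cdots+J_{a_{\mathfrak{g}}}(\w)=J(\w)$ a.e.; hence $U=V$ by item (ii) of Lemma \ref{range-properties}. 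Because the fiber decomposition is direct, any $f=f_1+\cdots+f_{\mathfrak{g}}$ with $f_j\in V_{a_j}$ and $f=0$ has fibers $\mathcal{T}f_j(\w)\in J_{a_j}(\w)$ summing to $0$, forcing each $\mathcal{T}f_j(\w)=0$ and thus $f_j=0$; therefore the sum is direct and $V=V_{a_1}\oplus\cdots\oplus V_{a_{\mathfrak{g}}}$, so $(a_1,\dots,a_{\mathfrak{g}})$ is an $s$-diagonalization. The Furthermore part is immediate from this construction, since $\sigma(V_{a_j})=C_j$ and the inclusions $C_{j+1}\subseteq C_j$ give $\sigma(V_{a_{j+1}})\subseteq\sigma(V_{a_j})$ for $j=1,\dots,\mathfrak{g}-1$.

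The step I expect to require the most care is the handling of the function $C_b$. Its definition refers to the distinct nonzero eigenspaces $E_{\mu_i(\w)}$, whereas the decompositions above may list trivial fibers $J_{a_j}(\w)=\{0\}$ at particular $\w$. The whole argument rests on the fact recalled before the theorem, that inserting such trivial spaces leaves the value of $c_b$ of the complements unchanged, so that $C_b(\w)$ is intrinsic to $R(\w)$ and coincides a.e. with the fiber angle of \emph{any} $s$-eigenvalue decomposition of $J(\w)$. This is precisely what lets a single uniform bound $\esssup_{\w} C_b<1$ control the closedness of the sum in both directions, and it is the only delicate point; the remaining work is the bookkeeping of transferring directness and equality of range functions from the fibers to the spaces $V_{a_j}$.
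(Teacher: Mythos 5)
Your proof is correct and follows essentially the same route as the paper's: Theorem \ref{s-diagonalizable then diagonalizable} plus Proposition \ref{equivalence sum of sis} and the identity $C_b(\w)=c_b\bigl(J_{a_1}(\w)^{\perp},\dots,J_{a_m}(\w)^{\perp}\bigr)$ for one direction, and Theorem \ref{converse-no angle} combined with Proposition \ref{equivalence sum of sis} and Lemma \ref{range-properties}(ii) for the other. Your explicit fiberwise argument for directness of the sum $V_{a_1}\oplus\cdots\oplus V_{a_{\mathfrak{g}}}$ is a detail the paper leaves implicit, but it is the same argument in substance.
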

 
 \begin{proof}
 Assume first that \ref{2} is satisfied and consider the sequences of bounded spectrum $a_1,\dots,a_{\mathfrak{g}}$ given by Theorem \ref{converse-no angle} and their respective  measurable range functions  
$J_{a_j}(\w):=\ker(R(\w)-{\hat{a}_j}(\w)\mathcal {I}_\omega)$ for $j=1,\dots,\mathfrak{g}$.

By Proposition \ref{prop-eigen}, we have that $V_{a_j}=\ker(L-\Lambda_{a_j})$ is the shift-invariant space associated to the range function $J_{a_j}$, i.e. $J_{a_j}=J_{V_{a_j}}$. From (ii) in Theorem \ref{converse-no angle}, we know that $\sigma(V_{a_j})$ has positive measure, $\Lambda_{a_j}$ is an $s$-eigenvalue of $L$ for every $j=1,\dots, \mathfrak{g}$ and  
$\sigma(V_{a_{j+1}})\subseteq\sigma(V_{a_j})$ for $j=1,\dots,\mathfrak{g}-1$.
	
Since $C_b(\w)=c_b\left( J_{V_{a_1}}(\w)^{\perp},\dots,J_{V_{a_{\mathfrak{g}}}}(\w)^{\perp}\right)$, by Proposition \ref{equivalence sum of sis}, we have that $V_{a_1}\oplus \dots \oplus V_{a_{\mathfrak{g}}}$ is a closed shift-invariant space which is contained in $V$ and their range functions coincide. By (ii) in Lemma \ref{range-properties}, this implies that $V = V_{a_1}\oplus \dots \oplus V_{a_{\mathfrak{g}}}.$ Hence, $L$ is $s$-diagonalizable and $(V,L,a_1,\dots,a_{\mathfrak{g}})$ is an $s$-diagonalization of $L$.

For the converse, Theorem \ref{s-diagonalizable then diagonalizable} takes care of the diagonalization of $R(\w)$ for a.e. $\w\in\sigma(V).$
On the other hand, assume that $(a_1, \dots, a_m)$ is an $s$-diagonalization of $L$. Since $V$ is closed, by Proposition \ref{equivalence sum of sis} we have that $c_b(V_{a_1}^{\perp},\dots,V_{a_m}^{\perp})<1$ and so, equivalently, $\underset{\omega\in [0,1)^d}{\text{\rm ess sup }} C_b(\w)<1$.
 	 \end{proof}

Now, we characterize the minimum number of components in an $s$-diagonali-zation of L.

 \begin{definition}\label{betaVL}
 	Given $V$  a finitely generated shift-invariant space and $L:V\rightarrow V$ a bounded shift-preserving operator which is $s$-diagonalizable, we define $ \beta(V,L)$ as the smallest natural number $m$ for which there exist $a_1,\dots,a_m$ sequences of bounded spectrum such that $(a_1, \dots, a_m)$ is an $s$-diagonalization of $L$. We will say that
 	an $s$-diagonalization is {\it minimal } if it has exactly $\beta(V,L)$ components.
 \end{definition}

 \begin{proposition}\label{minimal}
 	Let $V$ be a finitely generated shift-invariant space, $L:V\rightarrow V$ a bounded shift-preserving operator which is $s$-diagonalizable.Then, if $\mathfrak{g}$ is as in \eqref{Bi},	$$\beta(V,L)=\mathfrak{g}.$$
 \end{proposition}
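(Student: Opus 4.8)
The plan is to prove the two inequalities $\mathfrak{g}\leq\beta(V,L)$ and $\beta(V,L)\leq\mathfrak{g}$ separately, relying almost entirely on facts already assembled in this section. The essential observation is that $\mathfrak{g}$ has the intrinsic description $\mathfrak{g}=\operatorname{ess\,sup}_{\omega\in\sigma(V)}k(\w)$ recorded in \eqref{r esssup}, where $k(\w)=\#\Sigma(\w)$ counts the distinct eigenvalues of the fiber $R(\w)$, and this quantity does not depend on any particular choice of $s$-diagonalization. Thus $\mathfrak{g}$ is a lower bound dictated purely by the spectral geometry of the fibers, while Theorem \ref{converse with angle} will supply a concrete $s$-diagonalization realizing this bound.

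For the lower bound, I would start from an arbitrary $s$-diagonalization $(a_1,\dots,a_m)$ of $L$ and show $\mathfrak{g}\leq m$. This is exactly the content of the discussion following Lemma \ref{prop:h}: by part (i) of Remark \ref{rem:of Theo L diagonalizable}, the decomposition $J(\w)=J_{V_{a_1}}(\w)\oplus\dots\oplus J_{V_{a_m}}(\w)$ forces the number of distinct eigenvalues of $R(\w)$ to satisfy $k(\w)\leq m$ for a.e.\ $\w\in\sigma(V)$, since each nonzero summand $J_{V_{a_j}}(\w)$ is an eigenspace for the distinct eigenvalue $\hat a_j(\w)$. Taking the essential supremum over $\sigma(V)$ and using \eqref{r esssup} yields $\mathfrak{g}\leq m$. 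As this holds for every $s$-diagonalization, it holds in particular for one with $m=\beta(V,L)$ components, giving $\mathfrak{g}\leq\beta(V,L)$.

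For the upper bound, I would simply invoke Theorem \ref{converse with angle}: since $L$ is assumed $s$-diagonalizable, that theorem produces sequences of bounded spectrum $a_1,\dots,a_{\mathfrak{g}}$ forming an $s$-diagonalization of $L$ with the nesting $\sigma(V_{a_{j+1}})\subseteq\sigma(V_{a_j})$. This is an $s$-diagonalization with exactly $\mathfrak{g}$ components, so by the minimality defining $\beta(V,L)$ in Definition \ref{betaVL} we obtain $\beta(V,L)\leq\mathfrak{g}$. Combining the two inequalities gives $\beta(V,L)=\mathfrak{g}$, and moreover the $s$-diagonalization furnished by Theorem \ref{converse with angle} is minimal.

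I do not anticipate a genuine obstacle here, as all the heavy lifting — the measurable construction of eigenvalues in Theorem \ref{thm:autovalores-full}, the angle condition ensuring closedness in Proposition \ref{equivalence sum of sis}, and the actual construction of an optimal decomposition in Theorem \ref{converse with angle} — has already been done. The only point requiring mild care is the lower-bound step: one must phrase the pointwise bound $k(\w)\leq m$ correctly as an a.e.\ statement and pass to the essential supremum via \eqref{r esssup}, rather than attempting a pointwise comparison at every $\w$. The argument is therefore essentially a bookkeeping assembly of the preceding results.
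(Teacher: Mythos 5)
Your proposal is correct and follows essentially the same route as the paper's proof: the lower bound $\mathfrak{g}\leq\beta(V,L)$ via part (i) of Remark \ref{rem:of Theo L diagonalizable} together with \eqref{r esssup}, and the upper bound $\beta(V,L)\leq\mathfrak{g}$ by invoking the $\mathfrak{g}$-component $s$-diagonalization produced by Theorem \ref{converse with angle}. The only cosmetic difference is that you bound $k(\w)$ for an arbitrary $s$-diagonalization and then specialize to a minimal one, whereas the paper applies the remark directly to a minimal $s$-diagonalization.
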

 
 	\begin{proof}
 		Let $(a_{1},...,a_{m})$ be a minimal $s$-diagonalization of $L$, $m=\beta(V,L)$. Observe that (i) of Remark \ref{rem:of Theo L diagonalizable} asserts that the number of eigenvalues of $R(\w)$ is at most $\beta(V,L)$ for a.e. $\w \in \sigma(V)$. Hence, $\mathfrak{g} \leq \beta(V,L)$. 
 		
 		By Theorem \ref{converse with angle}, there always exists an $s$-diagonalization of $L$ whose number of $s$-eigenvalues is $\mathfrak{g}$, then $\beta(V,L)\leq \mathfrak{g}$.
 	\end{proof}

 \subsection{Normal shift-preserving  operators}
 
  When $L$ is normal, $R(\w)$ is normal for a.e. $\w\in[0,1)^d$ and so its eigenspaces are orthogonal. This fact allows us to avoid the angle condition because the orthogonal sum of closed subspaces is always closed. Below is given a generalization of the finite dimensional Spectral Theorem, for bounded normal shift-preserving operators. 
 
\begin{theorem}\label{thm:spectral}
	Let $V$ be a finitely generated shift-invariant space and  $L:V\rightarrow V$ a bounded shift-preserving operator. If $L$ is normal then it is $s$-diagonalizable and, if $(a_1, \dots, a_m)$ is an $s$-diagonalization of $L$, we have that
	\begin{equation}\label{spectral theorem L}
		L= \sum_{j=1}^{m} \Lambda_{a_j} P_{V_{a_j}},
	\end{equation}
where $P_{V_{a_j}}$ denotes the orthogonal projection of $V$ onto $V_{a_j}$ for $j=1, \dots, m$.
\end{theorem}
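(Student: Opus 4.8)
The plan is to establish normality forces diagonalizability with orthogonal eigenspaces at the fiber level, then invoke the already-proved characterization Theorem~\ref{converse with angle} to deduce $s$-diagonalizability, and finally verify the projections in the decomposition are orthogonal. Since $L$ is normal, Theorem~\ref{adjoint L} gives that $R(\w)$ is a normal operator for a.e. $\w\in[0,1)^d$. A normal operator on a finite-dimensional space (recall $\dim J(\w)<\infty$ since $V$ is finitely generated) is diagonalizable with pairwise orthogonal eigenspaces. Thus $R(\w)$ is diagonalizable for a.e. $\w\in\sigma(V)$, and moreover, for a.e. $\w$, writing $J(\w)=E_{\mu_1(\w)}\poplus\dots\poplus E_{\mu_{k(\w)}(\w)}$, the eigenspaces are \emph{mutually orthogonal}.

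The key observation is that orthogonality of the eigenspaces kills the angle condition automatically. Indeed, with the eigenspaces $E_{\mu_i(\w)}$ pairwise orthogonal, their sum is an orthogonal sum, which is always closed; equivalently, the quantity $C_b(\w)$ defined in \eqref{eq:Cb} equals $0$ for a.e.\ $\w\in\sigma(V)$, so trivially $\underset{\omega\in[0,1)^d}{\text{\rm ess sup }}C_b(\w)=0<1$. Hence condition~\ref{2} of Theorem~\ref{converse with angle} is satisfied, and that theorem immediately yields that $L$ is $s$-diagonalizable. This is the cleanest route: the hard analytic work on angles and closedness has already been carried out, and normality is precisely the hypothesis that trivializes it.

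It remains to upgrade the oblique projections $Q_{V_{a_j}}$ from Remark~\ref{s-eigenvalues not unique}(i) to \emph{orthogonal} projections for any given $s$-diagonalization $(a_1,\dots,a_m)$. The point is that for distinct $s$-eigenvalues, the $s$-eigenspaces $V_{a_i}$ and $V_{a_j}$ are orthogonal in $L^2(\R^d)$. To see this, it suffices by Lemma~\ref{supess cb} (or directly via the fiberization isometry $\cT$ of Proposition~\ref{isometria}) to check orthogonality fiberwise: for a.e.\ $\w$, $J_{V_{a_i}}(\w)=\ker(R(\w)-\hat a_i(\w)\mathcal{I}_\w)$ and $J_{V_{a_j}}(\w)=\ker(R(\w)-\hat a_j(\w)\mathcal{I}_\w)$ are distinct eigenspaces of the normal operator $R(\w)$, hence orthogonal whenever $\hat a_i(\w)\neq\hat a_j(\w)$; and on the set where $\hat a_i(\w)=\hat a_j(\w)$, Proposition~\ref{different eigenvalues} forces the corresponding fibers to intersect trivially and one checks the relevant fibers vanish there. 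Since $\cT$ is an isometric isomorphism preserving inner products, fiberwise orthogonality gives $V_{a_i}\poplus V_{a_j}$, so the decomposition \eqref{direct sum V} is in fact an \emph{orthogonal} direct sum. Consequently each $Q_{V_{a_j}}=P_{V_{a_j}}$ is the orthogonal projection, and substituting into the formula $L=\sum_{j=1}^m\Lambda_{a_j}Q_{V_{a_j}}$ of Remark~\ref{s-eigenvalues not unique}(i) yields \eqref{spectral theorem L}.

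\textbf{The main obstacle} I anticipate is purely bookkeeping around the measure-zero degeneracies: making rigorous the claim that distinct $s$-eigenspaces are orthogonal even on sets where the scalar symbols $\hat a_i,\hat a_j$ might coincide. On such sets one must argue that the fibers of at least one of the spaces vanish, using that $(a_1,\dots,a_m)$ genuinely forms a direct sum together with Proposition~\ref{different eigenvalues}; everything else is a direct application of Theorems~\ref{adjoint L} and~\ref{converse with angle} together with the elementary spectral theorem in finite dimensions.
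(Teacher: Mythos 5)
Your proof is correct, and in substance it is close to the paper's, but the two halves deserve separate comment. For the existence of an $s$-diagonalization, the paper does \emph{not} go through the angle criterion: it applies Theorem~\ref{converse-no angle} to produce sequences $a_1,\dots,a_{\mathfrak{g}}$ with $J(\w)=J_{a_1}(\w)\poplus\dots\poplus J_{a_{\mathfrak{g}}}(\w)$ a.e., observes that normality of $R(\w)$ makes these fibers mutually orthogonal, concludes that $V_{a_1}\poplus\dots\poplus V_{a_{\mathfrak{g}}}$ is then automatically a \emph{closed} shift-invariant subspace of $V$ whose range function agrees with $J$, and invokes Lemma~\ref{range-properties}(ii) to get $V=V_{a_1}\poplus\dots\poplus V_{a_{\mathfrak{g}}}$. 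You instead verify the hypothesis of Theorem~\ref{converse with angle} by asserting $C_b(\w)=0$; this reuses a harder theorem as a black box where the paper reruns a one-line closedness argument, but it is a legitimate shortcut. The second half of your argument (any $s$-diagonalization is automatically orthogonal, so the oblique projections of Remark~\ref{s-eigenvalues not unique}(i) become orthogonal ones) is exactly the paper's, and you are in fact more careful than the paper on the degenerate sets where $\hat a_i(\w)=\hat a_j(\w)$: your appeal to Proposition~\ref{different eigenvalues}, which applies because the direct sum \eqref{direct sum V} gives $V_{a_i}\cap V_{a_j}=\{0\}$, is the right way to see that a.e.\ on such sets at least one of the two fibers vanishes.

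One point needs repair. You justify $C_b(\w)=0$ by saying that the orthogonal sum of the eigenspaces is closed, ``equivalently'' $C_b(\w)=0$. That equivalence is false as stated: by Lemma~\ref{c_b<1}, closedness of $E_{\mu_1(\w)}+\dots+E_{\mu_{k(\w)}(\w)}$ is equivalent only to $C_b(\w)<1$, and a pointwise bound $C_b(\w)<1$ does \emph{not} yield $\esssup_{\w}\, C_b(\w)<1$ --- this pointwise-versus-uniform gap is precisely why the angle hypothesis appears in Theorem~\ref{converse with angle} and cannot be dropped in general. Your argument survives because the stronger statement $C_b(\w)=0$ is true, but it requires a short direct computation rather than an appeal to closedness: writing $E_1,\dots,E_k$ for the mutually orthogonal eigenspaces of the normal operator $R(\w)$, one has $\bigcap_{j}E_j^{\perp}=\bigl(E_1+\dots+E_k\bigr)^{\perp}=J(\w)^{\perp}$, so $C_b(\w)=\bigl\|P_{E_k^{\perp}}\cdots P_{E_1^{\perp}}P_{J(\w)}\bigr\|$; for any $x$, write $P_{J(\w)}x=v_1+\dots+v_k$ with $v_i\in E_i$, and note that $P_{E_1^{\perp}}=I-P_{E_1}$ removes $v_1$ (orthogonality gives $P_{E_1}v_i=0$ for $i\neq 1$), then $P_{E_2^{\perp}}$ removes $v_2$, and so on, so the product is the zero operator. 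With this computation inserted, your proof is complete.
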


\begin{proof}
	
	If $L$ is normal, by Theorem \ref{adjoint L},  we know that for a.e. $\omega\in[0,1)^d$, $R(\omega)$ is a normal operator acting on a finite-dimensional space $J(\w)$. Thus,  for a.e. $\w\in[0,1)^d$, $R(\w)$ is diagonalizable  and its eigenspaces  are orthogonal.  
	
	Let $\mathfrak{g}$ be as in \eqref{Bi}. By Theorem \ref{converse-no angle}, there exist sequences of bounded spectrum $a_1,\dots,a_\mathfrak{g}$ and measurable range functions $J_{a_j}(\w)=\ker(R(\w)-\hat{a}_j\mathcal I_\w)$, for $j=1,\dots,\mathfrak{g}$, such that $J(\w)=J_{a_1}(\w)\poplus \dots\poplus J_{a_\mathfrak{g}}(\w)$ for a.e. $\w\in[0,1)^d$.
	
	By (ii) in Proposition \ref{prop-eigen}, the subspace $V_{a_j}=\ker(L-\Lambda_{a_j})$ is the shift-invariant space associated to the range function $J_{a_j}$, that is, $J_{a_j}=J_{V_{a_j}}$. In addition, the orthogonality between the eigenspaces $J_{V_{a_j}}(\w)$ for a.e. $\w\in [0,1)^d$, implies that the $s$-eigenspaces $V_{a_j}$ are orthogonal.  Thus, we have that the orthogonal sum $V_{a_1}\poplus\dots\poplus V_{a_\mathfrak{g}}$ is a closed shift-invariant space which is contained in $V$ and their range functions coincide. By (ii) in Lemma \ref{range-properties} we conclude that $V=V_{a_1}\poplus\dots \poplus V_{a_\mathfrak{g}}$ and so $L$ is $s$-diagonalizable. 
	
	Moreover, let $(a_1,\dots,a_m)$ be any $s$-diagonalization of $L$ (not necessarily the mi\-nimal $s$-diagonalization). As before, since the eigenspaces of $R(\w)$ are orthogonal for a.e. $\w\in[0,1)^d$, the $s$-eigenspaces $V_{a_j}$ are orthogonal and $V=V_{a_1}\poplus\dots\poplus V_{a_m}$. Hence, the decomposition of $L$ in (\ref{spectral theorem L}) holds forthwith.

\end{proof}

\begin{remark}
	\
	
	(i) Theorem \ref{thm:spectral} can be extended to the case of shift-invariant spaces whose range operator satisfies $\dim J(\w) <\infty$ for a.e. $\w\in[0,1)^d$ (not necessarily finitely generated). 
	
	Indeed, first observe that under that hypothesis, Theorem \ref{converse-no angle} can be extended giving possibly infinite sequences $\{a_j\}_{j}$ such that $J(\w)=\bigoplus_{j} \ker(R(\w)-\hat{a}_j(\w)\mathcal I_\w)$ for a.e. $\w\in[0,1)^d$.
	Then, in the same way as in the proof of Theorem \ref{thm:spectral}, we get
	$$V=\underset{j}{\bigpoplus} V_{a_j},$$
	where the orthogonal sum could possibly be infinite. This gives us that $L=\sum_{j} \Lambda_{a_j}P_{V_{a_j}}$, being the convergence of this series in the strong operator topology whenever it is infinite.

	(ii) For shift-invariant spaces such that the dimension of $J(\w)$ is infinite on a set of positive measure, it is not generally true that $R(\w)$ is diagonalizable a.e. $\w\in[0,1)^d$ (i.e. there exists an orthonormal basis of $J(\w)$ consisting of eigenvectors of $R(\w)$ for a.e. $\w\in[0,1)^d$). Thus, an extension of Theorem \ref{thm:spectral} is in general no longer possible. Even if $R(\w)$ were diagonalizable a.e. $\w\in[0,1)^d$, since Theorem \ref{thm:autovalores-full} strongly relies on the fact that $J(\w)$ is finite-dimensional a.e. $\w\in[0,1)^d$, different arguments are needed to obtain a measurable choice of eigenvalues of $R(\w)$. See \cite{BCCHM20} where some extensions are obtained. 
\end{remark}

\begin{example}
Let $V=S(\Phi)$ be a finitely generated shift-invariant space such that $E(\Phi)$ is a Bessel sequence in $L^2(\R^d)$. As we said in Example \ref{ex:frame}, the frame operator associated to $E(\Phi)$ is shift preserving. Since it is self-adjoint, by Theorem \ref{thm:spectral}, we have that it is $s$-diagonalizable.  
\end{example}

\begin{example}
In the setting of Example \ref{ex:convolution}, assume that $E(\Phi)$ is an orthonormal basis and let $L$ be the shift-preserving operator  defined in \eqref{eq:op-convolution}. Then, since the matrix form of $R(\w)$  written on the basis $\{\mathcal{T}\varphi_{1}(\omega),\dots,\mathcal{T}\varphi_{n}(\omega)\}$ is the diagonal matrix $[R\,](\w)=\textrm{diag}(\hat{a}_1(\w),\dots, \hat{a}_n(\w))$ for a.e. $\omega\in [0,1)^{d}$, we have that $R(\w)$ is normal a.e. $\omega\in [0,1)^{d}$. Observe that this is not the case when we have a Riesz basis instead of an orthonormal basis. As a consequence, $L$ is normal as well and by Theorem \ref{thm:spectral}, it is $s$-diagonalizable. Moreover, $\Lambda_{a_1}\dots\Lambda_{a_n}$ are the $s$-eigenvalues of $L$ and $V_{a_j}=S(\varphi_j)$ for every $j=1,\dots, n$. 

When $E(\Phi)$ is a Riesz basis, we still have that, for a.e. $\omega\in [0,1)^{d}$, $\hat{a}_1(\w),\dots, \hat{a}_n(\w)$ are the eigenvalues of $R(\w)$ with associated eigenvectors $\mathcal{T}\varphi_{1}(\omega),\dots,\mathcal{T}\varphi_{n}(\omega)$ and that
$$J(\w)=\textrm{span}\{\mathcal{T}\varphi_{1}(\w)\}\oplus\dots\oplus\textrm{span}\{\mathcal{T}\varphi_{n}(\w)\}.$$
However, in order to conclude that $L$ is $s$-diagonalizable we have to impose that $\underset{\omega\in [0,1)^d}{\text{\rm ess sup }} c_b\left( \textrm{span}\{\mathcal{T}\varphi_{1}(\w)\}^{\perp},\dots,\textrm{span}\{\mathcal{T}\varphi_{n}(\w)\}^{\perp}\right)<1$. 
\end{example}

\section{Acknowledgements}
We thank the referee for her/his careful reading of the manuscript and for the interesting remarks and suggestions that had really helped us to improve the paper.

\end{document}